\newtheorem{theorem}{Theorem}[section]
\newtheorem{lem}[theorem]{Lemma}
\newtheorem{cor}[theorem]{Corollary}
\newtheorem{thm}[theorem]{Theorem}
\theoremstyle{definition}
\newtheorem{cond}[theorem]{Condition}
\theoremstyle{remark}
\newtheorem{ex}[theorem]{Example}
\newtheorem{rem}[theorem]{Remark}
\numberwithin{equation}{section} \numberwithin{theorem}{section}
\newcommand{\Exp}{\operatorname{E}}		
\newcommand{\Cov}{\operatorname{Cov}}
\newcommand{\Var}{\operatorname{Var}}
\newcommand{\R}{\mathbb R}
\newcommand{\N}{\mathbb N}
\newcommand{\Z}{\mathbb Z}
\newcommand{\eps}{\varepsilon}
\newcommand{\Jc}{\mathcal{J}}
\newcommand{\scs}{\scriptscriptstyle}
\newcommand{\ip}[1]{\lfloor #1 \rfloor}
\newcommand{\I}{\mathds{1}}
\newcommand{\ind}{\mathds{1}}
\newcommand{\wto}{\stackrel{d}{\longrightarrow}}
\newcommand{\Nor}{\mathcal{N}}
\newcommand{\op}{o_\Prob(1)}
\newcommand{\ho}{^}
\newcommand{\intne}{\int_{0}\ho{1}}
\newcommand{\intnu}{\int_{0}\ho{\infty}}
\newcommand{\sumki}{\sum_{i=1}\ho{k_n}}
\newcommand{\Pro}{\mathbb{P}}
\newcommand{\Prob}{\mathbb{P}}
\newcommand{\di}{\mathrm{d}}
\newcommand{\slb}{{{\operatorname{sb}}}} 		
\newcommand{\djb}{{{\operatorname{db}}}}		
\newcommand{\mb}{{{\operatorname{mb}}}}		
\newcommand{\pbh}{\hat{\bar{p}}}
\newcommand{\hhutsl}{\hat H_{n}^{\slb}}
\newcommand{\kn}{\mathcal{K}_n}
\newcommand{\Bc}{\mathcal{B}}
\newcommand{\Fc}{\mathcal{F}}
\begin{document}

\title[New estimators for the cluster size distribution]{Statistical analysis for stationary time series at extreme levels: new estimators for the limiting cluster size distribution}

\author{Axel B\"{u}cher}
\thanks{\textit{Corresponding author}: 
Axel Bücher (\href{mailto:axel.buecher@hhu.de}{axel.buecher@hhu.de})}
\author{Tobias Jennessen}

\address{Heinrich-Heine-Universit\"at D\"usseldorf, Mathematisches Institut, Universit\"atsstr.~1, 40225 D\"usseldorf, Germany.}
\email{axel.buecher@hhu.de}
\email{tobias.jennessen@hhu.de}

\date{\today}

\begin{abstract}
A measure of primal importance for capturing the serial dependence of a stationary time series at extreme levels is provided by the limiting cluster size distribution. New estimators based on a blocks declustering scheme are proposed and analyzed both theoretically and by means of a large-scale simulation study. A sliding blocks version of the estimators is shown to outperform a disjoint blocks version. In contrast to some competitors from the literature, the estimators only depend on one unknown parameter to be chosen by the statistician.

\medskip

\noindent \textit{Key words:} Asymptotic theory, block maxima, clusters of extremes, mixing coefficients.
\end{abstract}

\maketitle


\section{Introduction} 

The serial dependence of a stationary time series $(X_t)_{t\in\Z}$ at extreme levels may be described by various, partially interrelated limiting objects. 
The most traditional approach consists of studying the point process of exceedances and its weak convergence (see \citealp{HsiHusLea88}, or Section 10.3 in \citealp{BeiGoeSegTeu04}). Two characterizing objects show up in the limit: the extremal index $\theta\in[0,1]$ and the limiting cluster size distribution $\pi$, a probability distribution on the positive integers with $\pi(m)$ approximately representing the probability that extreme observations occur in a temporal cluster of size $m$. Under mild additional assumptions, the extremal index is in fact the reciprocal of the expectation of the limiting cluster size distribution \citep{Lea83}. 

A recent alternative object for assessing the serial dependence is given by the tail process $(Y_t)_{t\in\Z}$ (or the spectral process $(\Theta_t)_{t\in\Z}$) that may be associated with a suitably standardized version of $(X_t)_{t\in\Z}$ \citep{BasSeg09}.   Heuristically, the law of those processes on $\R^\Z$ provides a more detailed description of the serial dependence.  In fact, relying on results from \cite{KulSou20}, it can be shown that the limiting cluster size distribution $\pi$ may be expressed as a functional of the tail process under mild additional conditions, see Remark~\ref{rem:tp} below.

Estimating the above mentioned objects based on a finite stretch of observations has received a lot of attention in recent years. For instance, estimators for the extremal index have been studied in \cite{Hsi93, SmiWei94, FerSeg03, Suv07, Nor15, BucJen20}, among many others. Estimators for $\pi$ have been studied in \cite{Hsi91, Fer03, Rob09, Rob09b}. To the best of our knowledge, inference on the law of the tail process has only been studied for selected functionals (note that the above mentioned contributions fall into this category as well). For instance, \cite{DreSegWar15, DavDreSegWar18,DreKne20} investigate estimators for the c.d.f.\ of $Y_t$, at a fixed lag $t$, which are based on making sophisticated use of the time change formula. \cite{CisKul20} consider sliding blocks versions of peak-over-threshold estimators for a general class of functionals, including the extremal index and the limiting cluster size distribution. It worthwhile to mention that asymptotic theory for many of the afore-mentioned estimators may be (non-trivially) derived from high level results in \cite{DreRoo10} on empirical processes for cluster functionals, see also \cite{KulSouWin19}.

The present paper is motivated by the apparently little amount of well-studied estimators for the limiting cluster size distribution $\pi$. Inspired by recent contributions on the estimation of  the  extremal index, we study an estimator that is based on a (disjoint or sliding) blocks declustering method. The sliding blocks estimator is shown to be more efficient than the disjoint blocks version.
Moreover, by extensive Monte Carlo simulations, they are shown to exhibit very good finite-sample behavior in comparison to the competitors from \cite{Hsi91, Fer03, Rob09}.

The remaining parts of this paper are organized as follows: mathematical preliminaries, including precise definitions of the limiting objects described above, are provided in Section~\ref{sec:math}. In that section, we also define the new estimators. Regularity conditions needed to derive asymptotic normality are collected in Section~\ref{sec:cond}, with the respective theoretical results given in Section~\ref{sec:main}. Section~\ref{sec:sim} contains results from a large scale Monte Carlo simulation study. The main  arguments for the proofs are collected  in Section~\ref{sec:proofs}, with an interesting side result on weak convergence of an empirical process associated with compound probabilities presented in Section~\ref{sec:comp} and proven in Section~\ref{sec:proofcomp}. Finally, all remaining proofs are deferred to a sequence of appendices and additional simulation results are collected in Appendix~\ref{more_figures}.

\section{Mathematical preliminaries and definition of estimators}
\label{sec:math}

Throughout the paper, $(X_t)_{t \in \Z}$ denotes a stationary time series with marginal cumulative distribution function (c.d.f.) $F$. 
The sequence is assumed to have an \textit{extremal index} $\theta \in (0,1]$, i.e., we assume that, for any $\tau >0$, there exists a sequence $(u_n(\tau))_{n \in \N}$ such that 
$\lim_{n \to \infty} n \bar{F}(u_n(\tau))=\tau$ and 
\begin{align} 
\lim_{n \to \infty} \Pro(M_{1:n} \leq u_n(\tau)) = e\ho{-\theta \tau}, \label{ei_def}
\end{align} 
where $\bar{F} = 1-F$ and $M_{1:n} = \max\{X_1,\ldots,X_n\}$. Some thoughts reveal that, if the extremal index exists, then the convergence  in \eqref{ei_def} holds for any sequence $u_n(\tau)$ such that $\lim_{n \to \infty} n \bar{F}(u_n(\tau))=\tau$ (see, e.g., the beginning of Section 5 in \citealp{HsiHusLea88}) and that
we may always choose
$u_n(\tau)=F^{\leftarrow}(1-\tau/n)$ (see the proof of Theorem 1.7.13 in \citealp{LeaLinRoo83}). Subsequently, the latter definition is tacitly employed, where $F^{\leftarrow}(p) = \inf\{ x \in \R: F(x) \ge p\}$ denotes the (left-continuous)  generalized inverse of $F$.

The point process of exceedances is defined as 
 \[ 
 N_n\ho{(\tau)}(B) = \sum_{t=1}\ho{n} \I(t/n \in B, X_t > u_n(\tau)),
 \] 
 for any Borel set $B \subset E:=(0,1]$ and $\tau \geq 0$. If the time series is serially independent, then it is well-known that $N_n^{\scs (\tau)}$ converges in distribution to a  homogeneous Poisson process   on $E$ with intensity $\tau$. In the serial dependent case, if the extremal index exists and a certain mixing condition is met, then a necessary and sufficient condition for weak convergence of $N_n^{\scs (\tau)}$ is as follows, see Theorems~4.1 and 4.2 in  \cite{HsiHusLea88}: there exists a $\Delta (u_n(\tau))$-separating sequence $(q_n)_n$ (see Section~\ref{sec:cond} for a definition) such that the following limit exists for all $m\in\N_{\ge 1}$: 
\begin{align} \label{eq:pi}
\pi(m) = \lim_{n \to \infty} \pi_n(m), \quad \pi_n(m) = \Pro(N_{n}\ho{(\tau)}(B_n)=m \mid N_{n}\ho{(\tau)}(B_n) > 0),
\end{align}
where $B_n = (0,q_n/n]$. In that case, the convergence in the last display holds for any $\Delta(u_n(\tau))$-separating sequence $(q_n)_n$ and the weak limit of $N_n^{\scs (\tau)}$, say $N^{\scs (\tau)}$, is a compound poisson process with  intensity $\theta \tau$ and compounding distribution $\pi$, notionally $N^{\scs (\tau)} \sim \mathrm{CPP}(\theta \tau, \pi)$.   If the $\Delta(u_n(\tau))$-condition holds for all $\tau>0$, then $\pi$ does not depend on $\tau$  (\citealp{HsiHusLea88}, Theorem~5.1), which will be tacitly assumed throughout. Motivated by \eqref{eq:pi}, the distribution $\pi$ is commonly referred to as the \textit{(limiting) cluster size distribution}.

\begin{rem} \label{rem:tp}
The limiting cluster size distribution is closely connected to the tail process introduced in \cite{BasSeg09}, see also the monograph \cite{KulSou20}. Since the tail process may only be defined for heavy tailed stationary time series, a standardization is necessary first. For simplicity, we assume that $F$ is continuous. In that case, for any $t \in \Z$, $Z_t = 1/\{1-F(X_t)\}$ is standard Pareto-distributed and the event $X_t > u_n(\tau)$ is (almost surely) equivalent to $Z_t>n/\tau$. Under the assumption that $(Z_t)_{t \in \Z}$ is regularly varying 	(i.e., all vectors of the form $(Z_k,\ldots,Z_\ell)$ are multivariate regularly varying), there exists a process $(Y_t)_{t \in \Z}$, the \textit{tail process of $(Z_t)_{t \in \Z}$}, such that, for every $s,t\in \Z$ with $s \le t$,
\[ 
\Pro \big( x^{-1} (Z_s,\ldots,Z_t) \in \cdot \mid Z_0 >x \big) 
\stackrel{w}{\to} 
\Pro \big( (Y_s,\ldots,Y_t) \in \cdot \big) \quad (x\to\infty),
\] 
see Theorem 2.1 in \cite{BasSeg09}. If we additionally assume that, for the sequence $(q_n)_n$ from \eqref{eq:pi} and for all $x,y>0$,
\begin{align}
	\lim_{m \to \infty} \limsup_{n \to \infty} \Pro \Big( \max_{m \leq |t| \leq q_n} Z_t > n x \mid  Z_0 > n y \Big) = 0, \label{cond_tailpr}
	\end{align}
then $\pi$ may be expressed through the tail process, see Example 6.2.9 in \cite{KulSou20}:
	\begin{align*}
	\pi(m) 
	=& 
	\lim_{n \to \infty} \Pro \Big( \sum_{1 \le t \le q_n} \I(X_t >u_n(\tau)) = m \, \Big| \, \max_{1 \le t \le q_n} X_t > u_n(\tau) \Big) \\ 
	=& \ \Pro \Big( \sum_{t \ge 0} \I(Y_t>1) = m \, \Big| \, \max_{t \leq -1} Y_t \leq 1 \Big), \quad m \in \N_{\ge 1}.
	\end{align*}
	In other words, $\pi(m)$ is the conditional probability that the `number of time points where the tail process exceeds the value 1' equals $m$, conditional on the event that the tail process does not exceed 1 until $t=-1$.
	It is worthwhile to mention that \eqref{cond_tailpr} is for instance satisfied for geometrically ergodic Markov chains, short-memory linear or max-stable processes and m-dependent sequences; see \cite{CisKul20}, page 7, and \cite{KulSou20}, page 151.
	\end{rem}
	
Let $N_E^{\scs (\tau)}$ denote the distributional limit of $N_n^{\scs(\tau)}(E)$. Since the distribution of $N^{\scs(\tau)}$ is $\mathrm{CPP}(\theta \tau, \pi)$, we have the stochastic representation
\[
N_E^{(\tau)} \stackrel{d}{=} \sum_{i=1}^{\eta(\theta \tau)} \xi_i
\]
for independent random variables $\eta(\theta\tau) \sim \mathrm{Poisson}(\theta \tau)$ and $\xi_i \sim \pi$. 
As a consequence,  we have
\begin{align*}
p\ho{(\tau)}(0) &= \Prob(N^{\scs (\tau)}(E)=m) 
= 
e\ho{-\theta \tau}, \\
p\ho{(\tau)}(m) &= \Prob(N^{\scs (\tau)}(E)=m) 
=
\sum_{j=1}\ho{m} \frac{e\ho{-\theta \tau}(\theta \tau )\ho{j}}{j!} \pi\ho{\ast j}(m), \quad m\in\N_{\ge 1},
\end{align*} 
where $\pi\ho{\ast j}$ is the $j$-th convolution of $\pi$. 
As explicitly written down in Equation (1.5) in \cite{Rob09}, the previous equations allow to obtain, for any $\tau>0$, a recursion expressing $\pi(m)$ as a function of $\theta, p^{\scs (\tau)}(1), \dots, p^{\scs (\tau)}(m)$ and $\pi(1), \dots,  \pi(m-1)$. This recursion then allows for estimation of $\pi(m)$ based on estimation of $\theta, p^{\scs (\tau)}(1), \dots, p^{\scs (\tau)}(m)$, which is precisely the approach followed in \cite{Rob09}.

It may be argued that this 	approach suffers from the fact that the obtained recursion is depending on $\tau$, which ultimately implies that the final estimator depends on $\tau$ as well. Hence, the statistician has either to make a choice, or to apply a suitable aggregation scheme. Within the present paper, we propose to instead consider a different recursion based on
\[
\bar p(m) = \int_0^\infty p^{(\tau)}(m) \theta e^{-\theta \tau} \, \mathrm d\tau 
=
\Exp[p^{(Z)}(m)],
\]
where $Z \sim \mathrm{Exponential}(\theta)$. Perhaps surprisingly, and unlike for $p^{\scs (\tau)}(m)$ above, the respective recursion does not even depend on $\theta$, which allows for even simpler estimation. More precisely, a simple calculation shows that
$\bar{p}(0) = \intnu \theta e\ho{-2\theta \tau} \ \mathrm{d}\tau = 1/2$ and 
\begin{align*}
 \bar{p}(m) &= \sum_{j=1}^m \pi^{\ast j} (m) \frac{\theta}{j!} \intnu (\theta \tau)^j e^{-2 \theta \tau} \ \mathrm{d}\tau 
 = \sum_{j=1}^m \frac{1}{2^{j+1}} \pi^{\ast j}(m)
\end{align*}
for $m \in \N_{\ge1}$. As a consequence,
\begin{align*}
 \bar{p}(m) 
 &= \frac{1}{4} \pi(m) + \sum_{j=2}^m \frac{1}{2^{j+1}} \sum_{k=j-1}^{m-1} \pi^{\ast (j-1)}(k) \pi(m-k) \\
 &= \frac{1}{4} \pi(m) + \sum_{k=1}^{m-1} \pi(m-k) \sum_{j=2}^{k+1} \pi^{\ast(j-1)}(k) \frac{1}{2^{j+1}} \\
 &= \frac{1}{4} \pi(m) +\frac{1}{2} \sum_{k=1}^{m-1} \pi(m-k) \bar{p}(k),
\end{align*}
which in turn implies
\begin{equation}
 \pi(m) = 4 \bar{p}(m) - 2 \sum_{k=1}^{m-1} \pi(m-k) \bar{p}(k), \quad m \in \N_{\ge 1}. \label{RecursionPi}
\end{equation}
Obviously, Equation~\eqref{RecursionPi} allows to recursively derive $(\pi(1), \dots, \pi(m))$ from $(\bar p(1), \dots, \bar p(m))$. The plug-in principle hence allows to estimate the former vector based on suitable estimators for the latter vector. 

For the estimation of $(\bar p(1), \dots, \bar p(m))$,  a transformation extensively used in \cite{BerBuc18} and \cite{BucJen20} comes in handy: 
the random variable 
\begin{align*} 
Z_{1:n} = n\{1- F(M_{1:n}) \}
\end{align*}
is asymptotically exponentially distributed with parameter $\theta$, for $n\to\infty$. Indeed, since 
$v_n(\tau)=F^\rightarrow(1-\tau/n)$ with the right-continuous generalized inverse $F^{\rightarrow}$ satisfies $\lim_{n\to\infty} \bar  F(v_n(\tau))=\tau$, whence
\begin{align}
	\Pro(Z_{1:n} \geq \tau) = \Pro(M_{1:n} \leq v_n(\tau)) \to e^{-\theta \tau} \label{z_conv}
\end{align}
for $n\to\infty$ by \eqref{ei_def}.
Next, for motivating our estimator it is instructive to consider, for two independent copies $(X_t)_{t\in \Z}, (\tilde X_t)_{t\in\Z}$, the random variable
\[
N_n^{(\tilde Z_{1:n})}(E) = \sum_{t=1}^n \ind(X_t > u_n(\tilde Z_{1:n}))
\stackrel{a.s.}{=} \sum_{t=1}^n \ind(X_t > \tilde M_{1:n}),
\]
where $\tilde Z_{1:n}=n\{1-F(\tilde M_{1:n})\}$.
Then, conditional on $\tilde Z_{1:n}$, the random variable
$
N_n^{\scs (\tilde Z_{1:n})}(E) 
$
approximately follows a compound poisson distribution with intensity $\theta \tilde Z_{1:n}$ and compounding distribution $\pi$, for sufficiently large $n$. As a consequence, 
\[
\Prob\big(N_n^{(\tilde Z_{1:n})}(E)  = m \mid \tilde Z_{1:n}\big) \approx p^{(\tilde Z_{1:n})}(m),
\]
which readily implies
\begin{align} \label{eq:mm}
\Prob\Big(N_n^{(\tilde Z_{1:n})}(E) = m\Big) 
\approx \Exp\Big[p^{(\tilde Z_{1:n})}(m)\Big]  \approx \bar p(m),
\end{align}
where the second approximation  is due to \eqref{z_conv}. The latter display allows for estimation of 
$\bar p(m)$ based on the method of moments.

More precisely, suppose we observe a finite-stretch of observation from the time series, say $X_1, \dots, X_n$.  Divide the observation period into non-overlapping successive blocks of size $b=b_n$, that is, into blocks $I_i=I_i^{\djb}$ ($\djb$ for `disjoint blocks'),
\[
I_1=\{1, \dots, b\}, \quad I_2=\{b+1, \dots, 2b\}, \quad \dots \quad,
I_{k}=\{(k-1)b+1, \dots, kb\}, 
\]
where $k=k_n=\lfloor n/b_n\rfloor$. A possible remainder block $I_{k+1}^\circ=\{kb+1, \dots, n\}$ of cardinality $|I_{k+1}|<b_n$ will have a negligible influence on the subsequent estimators and will hence be discarded. Asymptotically, $(b_n)_n$ needs to be an intermediate sequence satisfying $b=b_n\to \infty$ and $b_n=o(n)$. Now, by well-known heuristics, cluster functionals (i.e., statistics that depend only the `large observations'  within a specific block $I_j$)
calculated based on disjoint blocks of observations may be considered asymptotically independent, whence \eqref{eq:mm} suggests to estimate $\bar p(m)$ by
\begin{align*} 
\pbh_n(m) = \pbh_n^{\,\djb} (m)
&= 
\frac{1}{k_n(k_n-1)} \sum_{\substack{i,i'=1 \\ i \neq i'}}^{k_n} \I \bigg\{ \sum_{s \in I_{i'}} \I\big(X_s > {M}_{ni}^{\djb} \big)=m \bigg\},
\end{align*} 
where the upper index `$\mathrm{db}$' refers to the fact that the underlying blocks are disjoint and where $M_{ni}^{\djb}=\max\{X_t: t \in I_i\}$.  Following \cite{BerBuc18,BucJen20}, a possibly more efficient version that is based on sliding/overlapping blocks instead of disjoint blocks is given by
\begin{align*} 
\pbh_n^{\,\slb}(m)
&= 
\frac{1}{|D_n|} \sum_{(i,i') \in D_n} \I \bigg\{ \sum_{s \in I_{i'}^{\slb}} \I \Big( X_s >  M_{ni}^\slb \Big) =m \bigg\}, 
\end{align*}
where  $I_{i}^{\slb}=\{i,\ldots,i+b_n-1\}, M_{ni}^\slb = \max\{X_t: t \in I_i^{\slb}\}$ and 
where $D_n$ is the set of all pairs $(i,i')\in \{1, \dots, n-b_n+1\}^2$ such that $I_{i}^\slb \cap I_{i'}^\slb=\varnothing$.
Obviously, since $I_{i}^\slb \cap I_{i'}^\slb=\varnothing$, the same heuristics as in the disjoint blocks case applies: the expectation of each summand is approximately equal to $\bar p(m)$. 

Based on the recursion \eqref{RecursionPi}, the final (disjoint and sliding blocks) estimators for $\pi(m)$, $m \in \N_{\ge1}$, are defined, for $\mb \in \{\mathrm{db}, \mathrm{sb}\}$, by
\begin{align} \label{eq:pihat}
\hat \pi_n^{\mb}(m) = 4\pbh_n^{\,\mb}(m) - 2 \sum_{k=1}^{m-1} \hat \pi^{\mb}_n(m-k) \pbh_n^{\,\mb}(k).
\end{align}

\section{Regularity conditions} \label{sec:cond}
	
This section summarizes technical regularity conditions  which are imposed to derive asymptotic properties for the estimators from the previous section.
First of all, the serial dependence will be controlled via alpha- and beta-mixing coefficients. For two sigma-fields $\Fc_1, \Fc_2$ on a probability space $(\Omega, \Fc, \Prob)$, let
\begin{align*}
 \alpha({\Fc}_1,{\Fc}_2)
	&=	\sup_{A \in {\Fc}_1, B\in {\Fc}_2} |\Prob(A\cap B)-\Prob(A)\Prob(B)|,\\
 \beta(\mathcal{F}_1,\mathcal{F}_2) &= \frac{1}{2} \sum_{i \in I} \sum_{j \in J} \sup |\Pro(A_i \cap B_j) - \Pro(A_i) \Pro(B_j)|,	
\end{align*}
where the last supremum is over all finite partitions $(A_i)_{i \in I} \subset \mathcal{F}_1$ and $(B_j)_{j \in J} \subset \mathcal{F}_2$ of $\Omega$. 
For $-\infty \le p < q \le \infty$ and $\eps\in(0,1]$, let $\Bc_{p:q}^\eps$  denote the sigma algebra generated by $U_s^\eps:=U_s \ind(U_s > 1- \eps)$ with $s\in \{p, \dots, q\}$; here, $U_s = F(X_s)$. Finally, for $\ell\ge 1$, let
 \begin{align*}
 \alpha_{\eps}(\ell) &= \sup_{k \in \N} \alpha(\Bc_{1:k}^\eps, \Bc_{k+\ell:\infty}^\eps), \ \ \ 
 \beta_{\eps}(\ell) = \sup_{k \in \N} \beta(\Bc_{1:k}^\eps, \Bc_{k+\ell:\infty}^\eps).
 \end{align*} 
Conditions on the decay of the mixing coefficients will be imposed below.  

Fix $m\ge 1$ and $\tau_1> \dots > \tau_m>0$. For $1\le p < q \le n$, let
$\Fc_{p:q,n}^{(\tau_1, \dots, \tau_m)}$ denote the sigma-algebra generated by the events $\{X_s > u_n(\tau_j)\}$ for $ s\in \{p, \dots, q\}$ and  $j \in \{1, \dots m\}$. For $\ell \in\{1, \dots, n\}$, define
\begin{multline*}
\alpha_{n,\ell}(\tau_1, \dots, \tau_m)=\sup\{ |\Prob(A\cap B) - \Prob(A) \Prob(B)| : \\
A \in \Fc_{1:s,n}^{(\tau_1, \dots, \tau_m)}, B \in \Fc_{s+\ell:n,n}^{(\tau_1, \dots, \tau_m)}, 1 \le s \le n-\ell\}.
\end{multline*}
The condition $\Delta_n(\{u_n(\tau_j)\}_{1\le j \le m})$ is said to hold if there exists a sequence $(\ell_n)_n$ with $\ell_n=o(n)$ such that $\alpha_{n,\ell_n}(\tau_1, \dots, \tau_m) =o(1)$ as $n\to\infty$. 
A sequence $(q_n)_n$ with $q_n=o(n)$ is said to be $\Delta_n(\{u_n(\tau_j)\}_{1\le j \le m})$-separating if there exists a sequence $(\ell_n)_n$ with $\ell_n=o(q_n)$ such that $\alpha_{n,\ell_n}(\tau_1, \dots,\tau_m) =o(q_n/n)$ as $n\to\infty$. 
If $\Delta_n(\{u_n(\tau_j)\}_{1\le j \le m})$ is met, then such a sequence always exists, simply take $q_n=\ip{\max\{ n\alpha_{n,\ell_n}^{\scs 1/2}, (n\ell_n)^{\scs 1/2}\}}.$ 

As already stated in Section~\ref{sec:math}, by Theorems 4.1 and 4.2 in \cite{HsiHusLea88},
if the extremal index exists and the $\Delta(u_n(\tau))$-condition is met ($m=1$), then a necessary and sufficient condition for weak convergence of $N_n^{\scriptscriptstyle(\tau)}$ 
is the convergence in \eqref{eq:pi} for some $\Delta(u_n(\tau))$-separating sequence $(q_n)_n$.
Moreover, in that case, the convergence in \eqref{eq:pi} holds for any $\Delta(u_n(\tau))$-separating sequence $(q_n)_n$, and the weak limit of $N_n^{\scriptscriptstyle(\tau)}$, say $N^{(\scs \tau)}$, is a compound poisson process $\mathrm{CPP}(\theta \tau, \pi)$. If the $\Delta(u_n(\tau))$-condition holds for any $\tau>0$, then $\pi$ does not depend on $\tau$ (\citealp{HsiHusLea88}, Theorem~5.1). 

A multivariate version of the latter results is stated in \cite{Per94}, see also the summary in \cite{Rob09}, page 278, and the thesis \cite{Hsi84}. Suppose that the extremal index exists and that  the $\Delta(u_n(\tau_1), u_n(\tau_2))$-condition is met for any $\tau_1\ge \tau_2\ge0, \tau_1 \ne0$. Moreover, assume that there exists a family of probability measures $\{\pi_2^{\scs(\sigma)}: \sigma\in [0,1]\}$ on $\Jc = \{(i,j) \in \N_{\ge 0}^2: i \ge j \ge 0, i \ge 1\}$, such that, for all $(i,j) \in \Jc$,
\[
\lim_{n\to\infty} \Prob(N_n^{(\tau_1)} (B_n) = i, N_n^{(\tau_2)} (B_n) = j \mid N_{n}^{(\tau_1)}(B_n) >0) = \pi_2^{(\tau_2/\tau_1)}(i,j),
\]
where $q_n$ is some $\Delta(u_n(\tau_1), u_n(\tau_2))$-separating sequence.  In that case, the two-level point process $\bm N_n^{\scs (\tau_1,\tau_2)}=(N_{\scs n,1}^{\scs (\tau_1,\tau_2)}, N_{\scs n,2}^{\scriptscriptstyle(\tau_1,\tau_2)})$ converges in distribution to a point process $\bm N^{\scs (\tau_1,\tau_2)} = (N_{\scs 1}^{\scs (\tau_1, \tau_2)}, N_{\scs 2}^{ \scs (\tau_1,\tau_2)})$ with characterizing Laplace transform explicitly stated in \cite{Rob09} on top of page 278. Throughout, let
\begin{align*} 
\bm N_E^{(\tau_1,\tau_2)} = (N^{(\tau_1,\tau_2)}_{E,1}, N^{(\tau_1,\tau_2)}_{E,2}) = \bm N^{(\tau_1, \tau_2)}(E),
\end{align*}
whose marginal distributions are equal to $N_E^{\scs (\tau_1)}$ and $N_E^{\scs (\tau_2)}$ and which further allows for the stochastic representation 
\[
\bm N_E^{(\tau_1, \tau_2)} \stackrel{d}{=} \sum_{i=1}^{\eta(\theta \tau_1)} (\xi_{i,1}^{(\tau_2/\tau_1)}, \xi_{i,2}^{(\tau_2/\tau_1)}),
\]
where $\eta(\theta \tau_1) \sim \mathrm{Poisson}(\theta \tau_1)$ is independent of the bivariate i.i.d.\ sequence $(\xi_{i,1}^{\scs (\tau_2/\tau_1)}, \xi_{i,2}^{\scs (\tau_2/\tau_1)}) \sim \pi_2^{\scs (\tau_2/\tau_1)}$. As a consequence, the distribution of $\bm N_{\scs E}^{\scriptscriptstyle(\tau_1,\tau_2)}$ on $\N_{\ge 0}^2$, say 
\[
p_2^{(\tau_1,\tau_2)}(i,j)  = \Prob( \bm N_E^{(\tau_1, \tau_2)}  = (i,j)),
\] 
 is given by 
$p_2^{\scs (\tau_1,\tau_2)}(0,0) = e^{-\theta \tau_1}$, $p_2^{\scs (\tau_1,\tau_2)}(i,j)=0$ for $i<j$ and 
\[ 
p_2^{(\tau_1,\tau_2)}(i,j) = e^{-\theta \tau_1} \sum_{k=1}^{i} \frac{(\theta \tau_1)^k}{k!} \pi_2^{(\tau_2/\tau_1), \ast k}(i,j) , \quad i \geq j \geq 0, \ i \geq 1, 
\]
where $\pi_2^{\scs (\tau_2/\tau_1), \ast k}$ is the $k$-th convolution of $\pi_2^{\scs (\tau_2/\tau_1)}$.

The assumptions needed to derive asymptotic properties for $\pbh_n^{\mb}(m)$ and  $\hat \pi^{\mb}_n(m)$ are collected in the following condition.

\begin{cond} \label{cond}~
	\begin{enumerate}[leftmargin=.8cm]
		\item[(i)] The stationary time series $(X_s)_{s\in \N}$ has an extremal index $\theta \in (0,1]$ and the two-level  point process of exceedances $\bm N_n^{\scs (\tau_1,\tau_2)}$ converges weakly to $\bm N^{\scs (\tau_1, \tau_2)}$. 
		\item[(ii)] 
		There exist constants $\varepsilon_1 \in (0,1), \eta > 2$ and $C >0$ such that 
		\[ 
		\alpha_{\varepsilon_1}(n) \leq C n^{-\eta}  \qquad \forall\ n \in \N.
		\]
		The block size $b_n$ converges to infinity and satisfies 
		\[ 
		k_n = o(b_n^\eta), \ n \to \infty, 
		\] 
		(i.e., a slow decrease of the mixing  coefficients requires large block sizes). 	Further, there exists a sequence 
		$\ell_n \to \infty$ with $\ell_n =o(b_n)$ and $k_n\alpha_{\eps_1}(\ell_n) =o(1)$ as $n \to\infty$. 
		\item[(iii)] For some $c > 1-\eps_1$ with $\eps_1$ from (ii), one has 
		\[ 
		\lim_{n \to \infty} \Pro \Big( \min_{i=1,\ldots,2k_n} N_{ni}' \leq c \Big) =0, 
		\]
		where $N_{ni}'=\max \{ U_s, \ s \in [(i-1)b_n/2+1,\ldots,ib_n/2]\}$ for $i\in\{1,\ldots,2k_n\}$ and $U_s=F(X_s)$.
		\item[(iv)] (Bias.) For any $j \in \N_{\ge 1}$, as $n \to \infty$, 
		\[ 
		\Exp \big[ \varphi_{n,j}(Z_{1:b_n}) \big] = \bar{p}(j) + o\big( k_n\ho{-1/2} \big), 
		\] 
		where $\varphi_{n,j}(z)= \Pro(N_{b_n}^{\scs (z)}=j)$.
	\end{enumerate}
\end{cond}

The conditions are weaker versions of the conditions imposed in \cite{BerBuc18}, which in turn are mostly based on \cite{Rob09}.  In contrast to those papers, no moment condition on the increments of $\tau \mapsto N^{\scs (\tau)}_n(E)$ is needed, which may be explained by the fact that the cluster functionals showing up in the definition of $\hat{\bar p}_n(m)$ are bounded by 1. This also allows for a great simplification of the $\alpha$-mixing condition in comparison to the last-named references. For the treatment of the sliding blocks estimator, we will additionally impose a beta-mixing condition below, which is used for proving tightness of the scaled estimation error of empirical compound probabilities, see Section~\ref{sec:comp}. Further discussions and exemplary time series models meeting the conditions (i)-(iii) are provided in \cite{BerBuc18}.


\section{Main results} \label{sec:main}
In this section we derive asymptotic normality of both the disjoint and sliding blocks estimators from Section \ref{sec:math}. A comparison of the asymptotic variances shows that the sliding blocks version exhibits a smaller asymptotic variance than the disjoint blocks version. Subsequently, for $\mb \in \{\djb, \slb\}$, let
\begin{align}
\nonumber
 	s_{n,j}^\mb 
	&= \sqrt{k_n} \big\{ \hat{\bar{p}}_n^{\,\mb}(j) - \bar p(j) \big\}, \quad j \in \N_{\ge 1}, \\
\label{eq:vmn}
	v_{n,j}^\mb 
	&= \sqrt{k_n} \big\{ \hat \pi_n^{\mb}(j) - \pi(j) \big\}, \quad j \in \N_{\ge 1}.	
\end{align}
For simplicity, we will further assume that $F$ is continuous.

\begin{thm} \label{theo:djb}
	Assume that Condition \ref{cond} is met. Then, for any $m \in \N_{\ge 1}$,
	\[ 
	(s_{n,1}^\djb, \ldots, s^\djb_{n,m}) \wto (s_1^\djb, \ldots, s_m^\djb) \sim \Nor_{m}(0,\Sigma_{m}^\djb)
	\]
	as $n \to \infty$,
	where the covariance matrix $\Sigma_{m}^\djb = (d_{j,j'}^\djb)_{1\leq j,j' \leq m}$ is given by
	\begin{multline} \label{eq:sigmadb}
	d_{j,j'}^\djb = \intnu\intnu \Cov \Big( 
	\I(N_E^{(\tau)}=j)+ p^{(Z)}(j),  \\
	\I(N_E^{(\tau')}=j') + p^{(Z)}(j') \Big) \,\di H(\tau) \,\di H(\tau').
	\end{multline}
	Here, $H$ denotes the c.d.f.\ of the $\mathrm{Exp}(\theta)$-distribution,  and $N_E^{(\tau)}\sim p^{(\tau)}$ and $Z\sim\mathrm{Exp}(\theta)$ are such that
	\[
	\Pro(N_E^{(\tau)}=j, Z>\mu) = 
	\begin{cases}
	p_2^{(\tau,\mu)}(j,0)  & ,\tau \geq \mu \\
	e^{-\theta \mu} \ind(j=0) & ,\tau < \mu
	\end{cases} 
	\qquad(j\in\N_{\ge 0},\mu>0)
	\]
	and
	\[
	\Pro(N_E^{(\tau)}=j, N_E^{(\tau')}=j') = 
	\begin{cases}
	p_2^{(\tau,\tau')}(j,j')  & ,\tau \geq \tau'\\
	p_2^{(\tau',\tau)}(j',j)  & ,\tau < \tau'
	\end{cases}
	\qquad(j,j'\in\N_{\ge 0}).
	\]
\end{thm}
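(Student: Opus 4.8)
\emph{Proof plan.}
Because $F$ is continuous, $\{X_s>M_{ni}^{\djb}\}=\{F(X_s)>F(M_{ni}^{\djb})\}$ almost surely, so, writing $B_i=(X_t)_{t\in I_i}$ for the $i$-th disjoint block, $\pbh_n^{\,\djb}(j)$ is a $U$-statistic of order two: with
\[
h_{n,j}(b,b')=\I\Big\{\textstyle\sum_s\I(b'_s>\max_t b_t)=j\Big\},\qquad
\bar h_{n,j}(b,b')=\tfrac12\big(h_{n,j}(b,b')+h_{n,j}(b',b)\big),
\]
one has $\pbh_n^{\,\djb}(j)=\binom{k_n}{2}^{-1}\sum_{1\le i<i'\le k_n}\bar h_{n,j}(B_i,B_{i'})$. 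I would follow the route used for $U$-statistics of weakly dependent data in \cite{BerBuc18,BucJen20}: Hoeffding-decompose, show that $\wk$ times the degenerate part is $o_\Prob(1)$, prove a central limit theorem (jointly over $j\in\{1,\dots,m\}$) for the Hájek projection by a Bernstein big-block/small-block argument, and finally identify the limiting covariance with $\Sigma_m^{\djb}$ from \eqref{eq:sigmadb}.

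\emph{Hájek projection, bias and mixing CLT.}
Set $g_{n,j}(b)=\Exp[\bar h_{n,j}(b,B')]$, with $B'$ an independent copy of a block, and $\theta_{n,j}=\Exp[g_{n,j}(B_1)]$. Conditioning and using the independence of the counting block from the thresholding block gives
\[
g_{n,j}(b)=\tfrac12\,\varphi_{n,j}\big(b_n\bar F(\max\nolimits_t b_t)\big)+\tfrac12\,\Exp_\zeta\Big[\I\big\{\textstyle\sum_s\I(b_s>u_{b_n}(\zeta))=j\big\}\Big],
\]
where $\zeta$ has the law of $Z_{1:b_n}$ and is independent of $b$; since both summands of $\theta_{n,j}$ equal $\Exp[\varphi_{n,j}(Z_{1:b_n})]$, Condition~\ref{cond}(iv) yields $\theta_{n,j}=\bar p(j)+o(k_n^{-1/2})$, so recentring at $\bar p(j)$ is harmless after multiplication by $\wk$, and it suffices to analyse $\frac{2}{\wk}\sum_{i=1}^{k_n}\{g_{n,j}(B_i)-\theta_{n,j}\}$ and the degenerate remainder. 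The functional $g_{n,j}(b)$ depends on $b$ only through $\max_t b_t$ and the indicators $\I(b_s>u_{b_n}(\tau))$, $\tau>0$; by Condition~\ref{cond}(iii) there is an event of probability tending to one on which, for every $i\le k_n$, the maximum over $I_i$ exceeds $F^{\leftarrow}(1-\eps_1)$, while a blocking estimate based on $\alpha_{\eps_1}(n)\le Cn^{-\eta}$ of Condition~\ref{cond}(ii) gives $\Prob(Z_{1:b_n}>\eps_1 b_n)=o(k_n^{-1/2})$, bounding the contribution of levels $\tau\ge\eps_1 b_n$ to the $\zeta$-expectation; hence $g_{n,j}(B_i)$ may be replaced, at a cost that is $o_\Prob(1)$ after scaling, by a bounded functional of the $\eps_1$-truncated block $(U_s^{\eps_1})_{s\in I_i}$. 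Since removing an $o(b_n)$-length edge from a block changes such an extreme functional only with probability $o(1)$, disjoint blocks are asymptotically independent cluster functionals; combining this trimming with the polynomial mixing bound, $k_n=o(b_n^\eta)$ and the sequence $\ell_n$ of Condition~\ref{cond}(ii), a standard Bernstein big-block/small-block decomposition and the Cram\'er--Wold device deliver the CLT with limiting covariance $\big(d_{j,j'}^{\djb}\big)_{1\le j,j'\le m}$, where $d_{j,j'}^{\djb}=4\lim_{n\to\infty}\Cov\big(g_{n,j}(B_1),g_{n,j'}(B_1)\big)$. As $\bar h_{n,j}\le1$, the degenerate remainder has variance $O(k_n^{-2})$ plus smaller-order mixing corrections, so $\wk$ times it is $o_\Prob(1)$; it remains to identify the covariance.

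\emph{Identification of the covariance.}
Write $g_{n,j}(B_1)=\tfrac12\varphi_{n,j}(Z_{1:b_n})+\tfrac12\Exp_\zeta[\I\{\tilde N^{(\zeta)}(B_1)=j\}]$ with $\tilde N^{(\tau)}(B_1)=\#\{s\in I_1:X_s>u_{b_n}(\tau)\}\stackrel{d}{=}N_{b_n}^{(\tau)}(E)$; the prefactor $4$ then cancels the two factors $\tfrac12$. By \eqref{z_conv}, $Z_{1:b_n}\wto Z\sim\mathrm{Exp}(\theta)$, and since $\{Z_{1:b_n}>\mu\}=\{N_{b_n}^{(\mu)}(E)=0\}$, the joint laws of $\big(\tilde N^{(\tau)}(B_1),\tilde N^{(\tau')}(B_1),\I(Z_{1:b_n}>\mu)\big)$ converge, for any fixed $\tau,\tau',\mu>0$, to those of $\big(N_E^{(\tau)},N_E^{(\tau')},\I(Z>\mu)\big)$ by the two-level point-process convergence of Condition~\ref{cond}(i); this produces precisely the pairwise joint laws $\Pro(N_E^{(\tau)}=j,Z>\mu)=p_2^{(\tau,\mu)}(j,0)$ for $\tau\ge\mu$ (and $=e^{-\theta\mu}\I(j=0)$ for $\tau<\mu$) and $\Pro(N_E^{(\tau)}=j,N_E^{(\tau')}=j')=p_2^{(\tau,\tau')}(j,j')$ for $\tau\ge\tau'$ appearing in the theorem. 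Moreover $\varphi_{n,j}(z)=\Prob(N_{b_n}^{(z)}(E)=j)\to p^{(z)}(j)$ locally uniformly in $z$ (by monotonicity of $z\mapsto\Prob(N_{b_n}^{(z)}(E)\le j)$ and continuity of the limit), and all quantities are bounded by $1$, so dominated convergence (the $\zeta\ge\eps_1 b_n$ tail being negligible as above) gives, jointly over $j$ and with convergence of second moments,
\[
g_{n,j}(B_1)\ \wto\ \tfrac12\Big(p^{(Z)}(j)+\int_0^\infty\I(N_E^{(\tau)}=j)\,\di H(\tau)\Big)=\tfrac12\int_0^\infty\big(\I(N_E^{(\tau)}=j)+p^{(Z)}(j)\big)\,\di H(\tau).
\]
Consequently $d_{j,j'}^{\djb}=\Cov\big(\int_0^\infty(\I(N_E^{(\tau)}=j)+p^{(Z)}(j))\,\di H(\tau),\,\int_0^\infty(\I(N_E^{(\tau')}=j')+p^{(Z)}(j'))\,\di H(\tau')\big)$, which by Fubini is exactly the double integral in \eqref{eq:sigmadb}.

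\emph{Main obstacle.}
The delicate step is the identification: establishing the joint weak convergence of the entire family $(\tilde N^{(\tau)}(B_1))_{\tau>0}$ together with the self-threshold $Z_{1:b_n}$ to the compound Poisson limit encoded by $\{p_2^{(\tau_1,\tau_2)}\}$, and then rigorously interchanging this limit with the outer expectation and with the $\zeta$-expectations — in particular coping with the $n$-dependence of the law of $\zeta$ and with the truncation at $\tau\asymp\eps_1 b_n$. A second, more routine difficulty underlies the mixing CLT: the kernel couples the two blocks through the block maximum used as threshold, which is handled by conditioning on one block and invoking Condition~\ref{cond}(iii) so that the relevant events become measurable with respect to the $\eps_1$-truncated sigma-fields, making the coefficients $\alpha_{\eps_1}$ applicable.
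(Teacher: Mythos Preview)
Your Hoeffding decomposition is essentially the paper's decomposition in disguise: with $G_n$ the c.d.f.\ of $Z_{1:b_n}$, your H\'ajek summand $2\{g_{n,j}(B_i)-\theta_{n,j}\}$ equals the paper's $W_{n,i}^\djb(j)$ with $H$ replaced by $G_n$, and your degenerate remainder (one-sided) equals $\int_0^\infty e_{n,j}^\djb(\tau)\,\di(\hat H_n^\djb-G_n)(\tau)$, which is the paper's $A_{n2}=\int e_{n,j}^\djb\,\di(\hat H_n^\djb-H)$ up to the deterministic correction $\int e_{n,j}^\djb\,\di(H-G_n)$. So the route and the covariance identification are the same.

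The genuine gap is your treatment of the degenerate part. The sentence ``variance $O(k_n^{-2})$ plus smaller-order mixing corrections'' does not go through as stated. The kernel $\bar h_{n,j}$ and the law of the blocks $B_i$ both change with $n$, so standard results for degenerate $U$-statistics of mixing sequences are not directly applicable; and a direct variance bound requires a case analysis of $\Cov(\psi(B_i,B_{i'}),\psi(B_{i''},B_{i'''}))$ over all index configurations, using degeneracy together with $\alpha$-mixing, which you do not carry out. In particular, for quadruples where two pairs of blocks are each close but mutually far apart, degeneracy does not immediately kill the covariance because adjacent blocks are dependent; you need to argue carefully that these $O(k_n^2)$ terms still contribute only $O(k_n^{-2})$ to $\Var(D_n)$, and that the $O(k_n^4)$ well-separated terms have covariance controlled by a summable mixing bound \emph{after} using degeneracy through a coupling step. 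The paper does not attempt a variance bound at all: its Lemma~\ref{dj_ejn_integral} splits the integral at a level $\ell$, handles $[0,\ell]$ via weak convergence of $\{e_{n,j}^\djb(\tau)\}_{\tau\in[0,\ell]}$ in $D([0,\ell])$ (Theorem~4.1 in \cite{Rob09}) combined with $\sup_{[0,\ell]}|\hat H_n^\djb-H|=o_\Prob(1)$, and handles the tail $[\ell,\infty)$ by repeated applications of Bradley's coupling lemma. This is where most of the work in the proof lies; your one-line dismissal hides it.

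A second, smaller point: the claim $\Prob(Z_{1:b_n}>\eps_1 b_n)=o(k_n^{-1/2})$ is not an immediate consequence of $\alpha_{\eps_1}(n)\le Cn^{-\eta}$ and would need its own argument. The paper sidesteps this by invoking Condition~\ref{cond}(iii) directly (the event $C_n$ that all block maxima exceed a fixed quantile has probability tending to one), and by bounding the tail integral $\int_\ell^\infty$ via coupling rather than via a tail bound on $Z_{1:b_n}$.
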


\begin{thm} \label{theo:slb}
	In addition to Condition \ref{cond} assume that $\sqrt{k_n} \beta_{\varepsilon_2}(b_n) =o(1)$ for some $\eps_2>0$. Then,  for any $m \in \N_{\ge 1}$,
	\[ 
	(s_{n,1}^{\slb}, \ldots, s_{n,m}^{\slb}) 
	\wto 
	(s_1^{\slb}, \ldots, s_m^{\slb}) \sim \Nor_{m}(0,\Sigma_{m}^{\slb})
	\]
	as $n \to \infty$, where the covariance matrix $\Sigma_m^{\slb} = (d_{j,j'}^{\slb})_{1 \leq j,j'\leq m}$ is given by 
   \begin{align}
	d_{j,j'}^{\slb} 
	&=
    2 \intne \Big\{  
    \intnu \intnu   \Cov \big( \I(X_{1,\xi}^{(\tau)}=j), \I(Y_{1,\xi}^{(\tau')}=j') \big) \, \di H(\tau) \di H(\tau') \nonumber \\
    &\hspace{2cm} + \intnu \! \!  \Cov\big( \I(X_{3, \xi}^{(\tau)}=j), p^{(Y_{3, \xi})}(j') \big) \, \di H(\tau) \nonumber \\
    & \hspace{2cm} + \intnu \! \! \Cov \big( \I(X_{3, \xi}^{(\tau)}=j'), p^{(Y_{3, \xi})}(j) \big) \, \di H(\tau) \nonumber \\
      & \hspace{2cm}+  \Cov \big( p^{(X_{2, \xi})}(j), p^{(Y_{2, \xi})}(j') \big) \Big\} \, \di \xi , \label{eq:sigmasl} 
    \end{align}
    where for $0 \leq \tau \leq \tau'$ and $x,y>0$,
    \begin{align*}
    \Pro \big( X_{1,\xi}^{(\tau)} =j, Y_{1 ,\xi}^{(\tau')}=j' \big) 
    &= 
    \sum_{l=0}^{j} \sum_{r=j-l}^{j'} p^{(\xi \tau)}(l)  p^{(\xi \tau')}(j'-r) \\
    &\hspace{3.5cm}  \times p_2^{((1-\xi)\tau', (1-\xi)\tau)}(r,j-l) , \\
    \Pro \big( X_{2,\xi} >x, Y_{2,\xi} >y \big) 
    &= 
    \exp \big( -\theta \{ (x \wedge y)\xi + (x \vee y) \} \big),\\
    \Pro \big( X_{3,\xi}^{(\tau)} =j, Y_{3,\xi} >x \big)  
    &= 
    e^{-\theta \xi x} \sum_{l=0}^{j}  p^{(\xi \tau)}(l) p_2^{((1-\xi)\tau, (1-\xi)x)}(j-l,0) \I(x \leq \tau) \\
    &\hspace{3.5cm} + e^{-\theta x} p^{(\tau \xi)}(j) \I(x > \tau).
    \end{align*}
\end{thm}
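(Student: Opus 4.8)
The plan is to establish an asymptotically linear representation of $\pbh_n^{\slb}(j)$ as a sliding-blocks average of bounded cluster functionals of single blocks, then invoke a central limit theorem for such averages, and finally compute the limiting covariance by a block-overlap calculation. Since $F$ is continuous, writing $Z_{ni}^{\slb}=b_n\bar F(M_{ni}^{\slb})$ --- which is asymptotically $\mathrm{Exp}(\theta)$-distributed by the block analogue of \eqref{z_conv} --- the indicator defining $\pbh_n^{\slb}(j)$ equals $\I\{N_{ni'}^{\slb,(Z_{ni}^{\slb})}=j\}$ almost surely, where $N_{ni'}^{\slb,(z)}=\sum_{s\in I_{i'}^{\slb}}\I(X_s>F^{\leftarrow}(1-z/b_n))$; conditionally on $I_i^{\slb}$ this is a copy of the $N_{b_n}^{(\cdot)}$ of Condition~\ref{cond}(iv), so $\Exp\,\I\{N_{ni'}^{\slb,(z)}=j\}=\varphi_{n,j}(z)$. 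Dropping the disjointness constraint in the double sum (an $o(k_n^{-1/2})$ error since the summands are bounded and only $O((n-b_n+1)b_n)$ pairs are affected) gives $\pbh_n^{\slb}(j)\approx(n-b_n+1)^{-1}\sum_i\bar Z_n(Z_{ni}^{\slb},j)$ with
\[
\bar Z_n(\tau,j):=\frac{1}{n-b_n+1}\sum_{i'}\I\{N_{ni'}^{\slb,(\tau)}=j\}=\varphi_{n,j}(\tau)+(n-b_n+1)^{-1/2}\,\mathbb{G}_n^{\slb}(\tau,j),
\]
where $\mathbb{G}_n^{\slb}(\tau,j)=(n-b_n+1)^{-1/2}\sum_{i'}\{\I(N_{ni'}^{\slb,(\tau)}=j)-\varphi_{n,j}(\tau)\}$ is exactly the sliding-blocks empirical process of compound probabilities studied in Section~\ref{sec:comp}. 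Plugging in the random levels and setting $\hat H_n=(n-b_n+1)^{-1}\sum_i\delta_{Z_{ni}^{\slb}}$ yields $\pbh_n^{\slb}(j)\approx(n-b_n+1)^{-1}\sum_i\varphi_{n,j}(Z_{ni}^{\slb})+(n-b_n+1)^{-1/2}\intnu\mathbb{G}_n^{\slb}(\tau,j)\,\di\hat H_n(\tau)$.

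The next step replaces $\hat H_n$ by its mean $H_n$ (the law of $Z_{1:b_n}$) inside the last integral; this is where the additional hypothesis $\wk\beta_{\eps_2}(b_n)=o(1)$ is used. Indeed $|\intnu\mathbb{G}_n^{\slb}(\tau,j)\,\di(\hat H_n-H_n)(\tau)|\le 2\sup_{\tau\le T,\,j\le m}|\mathbb{G}_n^{\slb}(\tau,j)|=\Op(1)$ by the weak convergence of $\mathbb{G}_n^{\slb}$ in $\ell^\infty([0,T]\times\{1,\dots,m\})$ proven in Section~\ref{sec:comp} (whose tightness proof is precisely what requires the $\beta$-mixing condition), after truncating at $\tau\le T$ and letting $T\to\infty$ using the exponential tail of $H$; hence, multiplied by $(n-b_n+1)^{-1/2}$ and then by $\wk$, this term is $\op$, because $\wk(n-b_n+1)^{-1/2}=\sqrt{k_n/(n-b_n+1)}\to 0$. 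Since $\intnu\mathbb{G}_n^{\slb}(\tau,j)\,\di H_n(\tau)=(n-b_n+1)^{-1/2}\sum_{i'}\{\tilde\psi_{n,j}(I_{i'}^{\slb})-\bar p(j)\}+o(k_n^{-1/2})$ with $\tilde\psi_{n,j}(I_{i'}^{\slb}):=\intnu\I\{N_{ni'}^{\slb,(\tau)}=j\}\,\di H_n(\tau)$ a $[0,1]$-valued cluster functional of the single block $I_{i'}^{\slb}$ (up to a truncation absorbing the tail of $H$) whose mean is $\intnu\varphi_{n,j}\,\di H_n=\Exp\varphi_{n,j}(Z_{1:b_n})=\bar p(j)+o(k_n^{-1/2})$ by Condition~\ref{cond}(iv), one arrives, jointly for $j=1,\dots,m$, at
\[
\wk\big\{\pbh_n^{\slb}(j)-\bar p(j)\big\}=\frac{\wk}{n-b_n+1}\sum_{i=1}^{\,n-b_n+1}\big\{g_{n,j}(I_i^{\slb})-\Exp[g_{n,j}(I_i^{\slb})]\big\}+\op,\qquad g_{n,j}(I_i^{\slb}):=\varphi_{n,j}(Z_{ni}^{\slb})+\tilde\psi_{n,j}(I_i^{\slb}),
\]
the remaining $\op$-terms --- edge blocks, the mixing slack in the conditional-expectation identities, and a doubly-centered second-order remainder --- being negligible by Condition~\ref{cond}(ii) and the boundedness of the summands, exactly as in the treatment of the disjoint blocks estimator (Theorem~\ref{theo:djb}).

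Asymptotic normality of $(s_{n,1}^{\slb},\dots,s_{n,m}^{\slb})$ with some covariance matrix $\Sigma_m^{\slb}$ now follows from the multivariate central limit theorem for sliding-blocks sums of bounded cluster functionals of an $\alpha$-mixing stationary sequence available under Condition~\ref{cond}(ii) (cf.\ \cite{BerBuc18}), with $d_{j,j'}^{\slb}=\lim_n k_n\Cov(\pbh_n^{\slb}(j),\pbh_n^{\slb}(j'))$. To obtain \eqref{eq:sigmasl}, note that the blocks $I_i^{\slb}$ and $I_{i+h}^{\slb}$ are dependent only for $|h|<b_n$, so that $k_n\Cov(\pbh_n^{\slb}(j),\pbh_n^{\slb}(j'))=\frac{k_n}{n-b_n+1}\sum_{|h|<b_n}\Cov(g_{n,j}(I_0^{\slb}),g_{n,j'}(I_h^{\slb}))+o(1)$; since $n-b_n+1\sim k_n b_n$, the right-hand sum is a Riemann sum which, under $h=\lfloor\xi b_n\rfloor$, converges to $2\intne c_{j,j'}(\xi)\,\di\xi$ with $c_{j,j'}(\xi)=\lim_n\Cov(g_{n,j}(I_0^{\slb}),g_{n,j'}(I_{\lfloor\xi b_n\rfloor}^{\slb}))$. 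Expanding $g=\varphi+\tilde\psi$ splits $c_{j,j'}(\xi)$ into the four covariances in \eqref{eq:sigmasl} (a $\varphi$--$\varphi$ term, a $\tilde\psi$--$\tilde\psi$ term and two $\varphi$--$\tilde\psi$ cross terms); the joint laws $X_{1,\xi},X_{2,\xi},X_{3,\xi}$ are then identified by splitting the two overlapping windows $I_0^{\slb},I_{\lfloor\xi b_n\rfloor}^{\slb}$ into their shared sub-block of length $\approx(1-\xi)b_n$ and the two private sub-blocks of length $\approx\xi b_n$, applying on these sub-blocks the one- and two-level point process convergences contained in Condition~\ref{cond}(i) and using the asymptotic independence of disjoint sub-blocks. (Note that \eqref{eq:sigmadb} is recovered as $c_{j,j'}(0)$.)

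I expect two steps to require most of the work. The genuine obstacle is the tightness of the sliding-blocks compound-probability empirical process $\mathbb{G}_n^{\slb}$ over the continuum of levels $\tau$: the extra $\beta$-mixing hypothesis is imposed solely for it, and the whole linearization rests on it; this is deferred to Sections~\ref{sec:comp} and~\ref{sec:proofcomp}. The second, more pedestrian but still delicate, is the combinatorial bookkeeping in the overlap step needed to match $c_{j,j'}(\xi)$ to the precise formulas for $X_{1,\xi},X_{2,\xi},X_{3,\xi}$ --- tracking exceedances of two different random levels across three adjacent sub-blocks and correctly combining one- and two-level point-process limits.
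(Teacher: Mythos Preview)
Your overall architecture matches the paper's: write $s_{n,j}^{\slb}$ as a sliding-blocks average of the single-block functional $W_{n,i}^{\slb}(j)=\int_0^\infty\{\I(N_{b_n,i}^{(\tau),\slb}=j)-\varphi_{n,j}(\tau)\}\,\di H(\tau)+\varphi_{n,j}(Z_{ni}^{\slb})-\Exp\varphi_{n,j}(Z_{ni}^{\slb})$ plus a cross term $\int e_{n,j}^{\slb}\,\di(\hat H_n^{\slb}-H)$, show the latter is $o_\Prob(1)$, then apply a CLT and compute the covariance via the overlap decomposition you describe.

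There is, however, a genuine gap in your linearization step. The process you call $\mathbb{G}_n^{\slb}$, normalized by $(n-b_n+1)^{-1/2}$, is \emph{not} the process of Section~\ref{sec:comp}: there $e_{n,j}^{\slb}$ is normalized by $\sqrt{k_n}$, so $\mathbb{G}_n^{\slb}\asymp\sqrt{b_n}\,e_{n,j}^{\slb}$ and $\sup_{\tau\le T}|\mathbb{G}_n^{\slb}(\tau,j)|=O_\Prob(\sqrt{b_n})$, not $O_\Prob(1)$. With the correct scaling, the cross term reads exactly $\int_0^\infty e_{n,j}^{\slb}(\tau)\,\di(\hat H_n^{\slb}-H)(\tau)$, with \emph{no} additional factor $\sqrt{k_n/(n-b_n+1)}$ in front. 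Your crude bound $|\int e\,\di(\hat H_n-H)|\le 2\sup|e|$ then only yields $O_\Prob(1)$, which is useless. What is actually needed --- and what the paper does in Lemma~\ref{sl_ejn_integral} --- is to combine weak convergence of $e_{n,j}^{\slb}$ on $[0,\ell]$ (Theorem~\ref{weak_conv}, requiring the $\beta$-mixing assumption) with the \emph{uniform} convergence $\sup_{\tau\le\ell}|\hat H_n^{\slb}(\tau)-H(\tau)|=o_\Prob(1)$ to conclude that the integral over $[0,\ell]$ vanishes, and then treat the tail $[\ell,\infty)$ separately via a coupling argument that is not entirely routine. Once this is repaired, your $g_{n,j}$ coincides (up to centering and the immaterial $H$ versus $H_n$ difference) with the paper's $W_{n,i}^{\slb}(j)$, and your CLT and covariance outline match Lemmas~\ref{sl_weakconv} and~\ref{Sliding_OtherCov}.
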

It is worthwhile to mention that $X_{\scs 1,\xi}^{\scs (\tau)}, Y_{\scs 1,\xi}^{\scs (\tau)}, X_{\scs 3,\xi}^{\scs (\tau)}$ are equal in distribution to $N_E^{\scs (\tau)}$  and that $X_{2,\xi},Y_{2,\xi},Y_{3,\xi}$ are exponentially distributed with parameter~$\theta$.

Regarding the estimator $\hat \pi^\mb_n(j)$ from \eqref{eq:pihat}, recall the definition of $v_{n,j}^\mb$ in \eqref{eq:vmn} and of $(s_1^\mb, \dots, s_m^\mb)$ and $\Sigma_m^\mb$ in Theorem~\ref{theo:djb} ($\mb = \djb$) or Theorem~\ref{theo:slb} ($\mb = \slb$).

\begin{cor} \label{Conv_vjn}
Let $\mb \in \{\djb, \slb\}$. Under the conditions of Theorem~\ref{theo:djb} ($\mb = \djb$) or Theorem \ref{theo:slb} ($\mb = \slb$) we have, for any $m \in \N_{\ge 1}$ and as $n \to \infty$,
	\[ 
	(v_{n,1}^\mb,\ldots,v_{n,m}^\mb) \wto (v_1^\mb,\ldots,v_m^\mb) \sim \Nor_{m}(0,\Gamma_m^\mb),\] 
	where  $v_1^\mb = 4 s_1^\mb$ and 
	\[ 
	v_j^\mb = 4s_j^\mb - 2 \sum_{k=1}^{j-1} \pi(j-k)s_k^\mb - 2 \sum_{k=1}^{j-1} \bar p(j-k) v_{k}^\mb, \quad j \ge 2.
	\]
	This recursion allows to write $(v_1^\mb,\ldots,v_m^\mb)^\top = A_m (s_1^\mb,\ldots,s_m^\mb)^\top$ for some matrix $A_m \in \R^{m\times m}$, such that the covariance matrix $\Gamma_m^\mb$ may be written as $\Gamma_m^\mb = A_m \Sigma_{m}^\mb A_m^\top$.
	\end{cor}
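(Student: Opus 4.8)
The plan is to derive Corollary~\ref{Conv_vjn} as a direct consequence of Theorem~\ref{theo:djb} (resp.\ Theorem~\ref{theo:slb}) by invoking the continuous mapping theorem together with an induction on $m$ that mirrors the recursion~\eqref{RecursionPi}. The key observation is that $v_{n,j}^\mb$ can be written as a fixed linear (more precisely, affine) function of $(s_{n,1}^\mb,\dots,s_{n,j}^\mb)$, up to an asymptotically negligible remainder; once this is shown, the joint convergence of $(s_{n,1}^\mb,\dots,s_{n,m}^\mb)$ established in the cited theorems immediately transfers to $(v_{n,1}^\mb,\dots,v_{n,m}^\mb)$, and the limiting covariance matrix is obtained by the usual transformation rule $\Gamma_m^\mb = A_m \Sigma_m^\mb A_m^\top$.

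First I would fix $\mb \in \{\djb,\slb\}$ and expand the estimation error of $\hat\pi_n^\mb(m)$. Writing $\hat\pi_n^\mb(m) - \pi(m)$ using definition~\eqref{eq:pihat} and subtracting the true recursion~\eqref{RecursionPi}, one obtains
\[
\hat\pi_n^\mb(m) - \pi(m)
= 4\big(\pbh_n^{\,\mb}(m) - \bar p(m)\big)
- 2\sum_{k=1}^{m-1}\Big\{ \hat\pi_n^\mb(m-k)\pbh_n^{\,\mb}(k) - \pi(m-k)\bar p(k)\Big\}.
\]
The bracketed term is split in the standard way as
\[
\hat\pi_n^\mb(m-k)\pbh_n^{\,\mb}(k) - \pi(m-k)\bar p(k)
= \big(\hat\pi_n^\mb(m-k) - \pi(m-k)\big)\pbh_n^{\,\mb}(k)
+ \pi(m-k)\big(\pbh_n^{\,\mb}(k) - \bar p(k)\big).
\]
Multiplying through by $\sqrt{k_n}$ and using the notation~\eqref{eq:vmn} yields
\[
v_{n,m}^\mb
= 4 s_{n,m}^\mb
- 2\sum_{k=1}^{m-1} v_{n,m-k}^\mb\, \pbh_n^{\,\mb}(k)
- 2\sum_{k=1}^{m-1} \pi(m-k)\, s_{n,k}^\mb.
\]
Since Theorem~\ref{theo:djb} (resp.\ \ref{theo:slb}) gives $s_{n,k}^\mb = \Op(1)$ and hence, by the recursion and induction on $m$, $v_{n,k}^\mb = \Op(1)$ for each $k \le m-1$, while $\pbh_n^{\,\mb}(k) = \bar p(k) + \op$ by the same theorem (the marginal consistency being implied by the marginal CLT), one may replace $\pbh_n^{\,\mb}(k)$ by $\bar p(k)$ at the cost of an $\op$ term. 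This produces exactly the claimed limiting recursion with $v_{n,\cdot}^\mb$ replaced by its limit $v_\cdot^\mb$, modulo $\op$.

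The induction then proceeds as follows: for $m=1$ we have $v_{n,1}^\mb = 4 s_{n,1}^\mb \wto 4 s_1^\mb$ trivially. Assuming the joint convergence $(s_{n,1}^\mb,\dots,s_{n,m}^\mb,v_{n,1}^\mb,\dots,v_{n,m-1}^\mb) \wto (s_1^\mb,\dots,s_m^\mb,v_1^\mb,\dots,v_{m-1}^\mb)$ (which holds at stage $m-1$ because each $v_{n,k}^\mb$ with $k\le m-1$ is, by the same argument applied recursively, a fixed linear function of $(s_{n,1}^\mb,\dots,s_{n,k}^\mb)$ plus $\op$, so the whole vector is a continuous transformation of $(s_{n,1}^\mb,\dots,s_{n,m}^\mb)$ plus $\op$), Slutsky's lemma together with the display above gives convergence of $v_{n,m}^\mb$ jointly with the rest. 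Unwinding the recursion expresses $(v_1^\mb,\dots,v_m^\mb)^\top$ as $A_m(s_1^\mb,\dots,s_m^\mb)^\top$ for a lower-triangular matrix $A_m$ with entries determined by $\pi(1),\dots,\pi(m-1)$ and $\bar p(1),\dots,\bar p(m-1)$ (diagonal entries all equal to $4$); explicitly $A_m$ is built column by column from the coefficients $-2\pi(j-k)$ and $-2\bar p(j-k)$. Because $(s_1^\mb,\dots,s_m^\mb)\sim \Nor_m(0,\Sigma_m^\mb)$, the image $A_m(s_1^\mb,\dots,s_m^\mb)^\top$ is Gaussian with mean zero and covariance $A_m \Sigma_m^\mb A_m^\top =: \Gamma_m^\mb$, which is the assertion.

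There is essentially no hard analytic obstacle here; the only point requiring a little care is bookkeeping in the induction — namely verifying that at each stage the $\op$ remainders genuinely vanish and do not accumulate, which is where the boundedness $s_{n,k}^\mb,\,v_{n,k}^\mb = \Op(1)$ and the consistency $\pbh_n^{\,\mb}(k) - \bar p(k) = \op$ are used, both consequences of Theorems~\ref{theo:djb}--\ref{theo:slb}. One should also note that the argument is insensitive to whether $\mb=\djb$ or $\mb=\slb$: the only input is the joint asymptotic normality of $(s_{n,1}^\mb,\dots,s_{n,m}^\mb)$, and the matrix $A_m$ does not depend on $\mb$ (it depends only on the true quantities $\pi,\bar p$), which is precisely why $\Gamma_m^\mb = A_m\Sigma_m^\mb A_m^\top$ with the same $A_m$ in both cases.
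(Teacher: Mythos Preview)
Your proposal is correct and follows essentially the same route as the paper: both arguments use induction on $m$, expand the recursion~\eqref{eq:pihat} against~\eqref{RecursionPi}, and invoke the joint convergence of $(s_{n,1}^\mb,\dots,s_{n,m}^\mb)$ from Theorems~\ref{theo:djb}/\ref{theo:slb} together with Slutsky (the paper phrases this step as the delta method applied to the bilinear map $\varphi_m$, which for this function amounts to exactly your explicit linearization). The conclusion that $(v_1^\mb,\dots,v_m^\mb)$ is a fixed linear image $A_m(s_1^\mb,\dots,s_m^\mb)^\top$ with $A_m$ independent of $\mb$ is reached identically.
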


In the next theorem it will be shown that the asymptotic variances of the sliding blocks estimators are not larger than the asymptotic variances of their disjoint blocks counterparts. As a consequence, the sliding blocks estimators can be considered at least as efficient and should usually be preferred in practice.

\begin{thm} \label{Var_comp}
	For any $m \in \N$, we have
	\[ 
	\Sigma_m^{\slb} \leq_L \Sigma_m^{\djb} \quad \textrm{and} \quad 
	\Gamma_m^{\slb} \leq_L \Gamma_m^{\djb}, 
	\] 
	where $\leq_L$ denotes the Loewner-ordering between symmetric matrices. In particular, $\Var(s_j^{\slb}) \leq \Var(s_j^{\djb})$ and $\Var(v_j^{\slb}) \leq \Var(v_j^{\djb})$ for any $j\in \N_{\ge 1}$.
\end{thm}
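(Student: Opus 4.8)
The plan is to reduce the two matrix inequalities to a single scalar one and then to exploit the standard ``sliding blocks $=$ average of phase-shifted disjoint blocks'' representation. First I would observe that the matrix $A_m$ from Corollary~\ref{Conv_vjn}, which expresses $(v_1^{\mb},\dots,v_m^{\mb})$ in terms of $(s_1^{\mb},\dots,s_m^{\mb})$, depends only on $\pi$ and $\bar p$ and hence is the \emph{same} for $\mb=\djb$ and $\mb=\slb$; consequently $\Gamma_m^{\slb}\leq_L\Gamma_m^{\djb}$ follows from $\Sigma_m^{\slb}\leq_L\Sigma_m^{\djb}$ by conjugation with $A_m$, and the two scalar variance inequalities at the end of the statement are obtained by evaluating the quadratic forms at the canonical basis vectors. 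By definition of the Loewner order it then suffices to prove $a^\top\Sigma_m^{\slb}a\leq a^\top\Sigma_m^{\djb}a$ for every fixed $a=(a_1,\dots,a_m)^\top\in\R^m$. By Theorems~\ref{theo:djb} and~\ref{theo:slb} together with the continuous mapping theorem, $T_n^{\mb}:=\sum_{j=1}^m a_j s_{n,j}^{\mb}=\sqrt{k_n}\sum_{j=1}^m a_j\{\pbh_n^{\,\mb}(j)-\bar p(j)\}\wto\Nor(0,\sigma_{\mb}^2)$ with $\sigma_{\mb}^2:=a^\top\Sigma_m^{\mb}a$, so the whole theorem reduces to showing $\sigma_{\slb}^2\leq\sigma_{\djb}^2$.

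Next I would isolate, from the proofs of Theorems~\ref{theo:djb} and~\ref{theo:slb}, an asymptotically linear representation driven by one and the \emph{same} block functional in the disjoint and in the sliding case. Since the kernels defining $\pbh_n^{\,\mb}$ are bounded by $1$, a Hoeffding decomposition together with the mixing and block-length conditions from Condition~\ref{cond} gives, in $L^2$, $T_n^{\mb}=\tilde T_n^{\mb}+o(1)$, where
\[
\tilde T_n^{\djb}=\frac{1}{\sqrt{k_n}}\sum_{i=1}^{k_n}\Psi_{n,i}^a,\qquad
\tilde T_n^{\slb}=\frac{1}{\sqrt{k_n}\,b_n}\sum_{i=1}^{n-b_n+1}\Psi_{n,i}^a
\]
are the disjoint resp.\ sliding blocks averages of the centred block functional $\Psi_{n,i}^a=\Psi^a(X_i,\dots,X_{i+b_n-1})$ (encoding the two roles of a block as a ``maximum block'' and as a ``counting block'' --- the origin of the two summands inside the covariance in \eqref{eq:sigmadb}, while the continuum of overlap fractions accounts for the additional $\di\xi$-integral in \eqref{eq:sigmasl}), with $\Exp[\Psi_{n,i}^a]=0$, $\sup_{n,i}\Exp[(\Psi_{n,i}^a)^2]<\infty$, and $\Var(\tilde T_n^{\mb})\to\sigma_{\mb}^2$. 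Grouping the sliding sum according to the residue of the starting index modulo $b_n$ yields the \emph{exact} identity $\tilde T_n^{\slb}=b_n^{-1}\sum_{r=0}^{b_n-1}\tilde T_n^{\djb,(r)}$, where $\tilde T_n^{\djb,(r)}:=k_n^{-1/2}\sum_{i\equiv r+1\ (\mathrm{mod}\ b_n)}\Psi_{n,i}^a$ and, by stationarity, each $\tilde T_n^{\djb,(r)}$ is a shifted copy of $\tilde T_n^{\djb}$, so $\Var(\tilde T_n^{\djb,(r)})\to\sigma_{\djb}^2$.

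Finally I would conclude by Cauchy--Schwarz applied to this phase average: stationarity and the uniform $L^2$-bound on $\Psi_{n,i}^a$ give $\Exp[(\tilde T_n^{\djb,(r)})^2]=\sigma_{\djb}^2+o(1)$ uniformly in $r$, hence $|\Cov(\tilde T_n^{\djb,(r)},\tilde T_n^{\djb,(r')})|\leq\sigma_{\djb}^2+o(1)$ uniformly in $(r,r')$, and therefore
\[
\Var(\tilde T_n^{\slb})=\frac{1}{b_n^2}\sum_{r,r'=0}^{b_n-1}\Cov\big(\tilde T_n^{\djb,(r)},\tilde T_n^{\djb,(r')}\big)\leq\sigma_{\djb}^2+o(1).
\]
Letting $n\to\infty$ and using $\Var(\tilde T_n^{\slb})\to\sigma_{\slb}^2$ gives $\sigma_{\slb}^2\leq\sigma_{\djb}^2$, as required. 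I expect the main obstacle to be the second-moment bookkeeping: plain weak convergence in Theorems~\ref{theo:djb} and~\ref{theo:slb} does not by itself deliver the $L^2$-linearization $T_n^{\mb}=\tilde T_n^{\mb}+o(1)$, the convergences $\Var(\tilde T_n^{\mb})\to\sigma_{\mb}^2$, or --- crucially --- the \emph{uniformity in the phase $r$} of $\Exp[(\tilde T_n^{\djb,(r)})^2]\to\sigma_{\djb}^2$, and establishing these rests on a uniform-integrability / direct variance estimate that exploits the boundedness of the cluster functionals, the polynomial mixing rate $\alpha_{\eps_1}(n)\leq Cn^{-\eta}$, and the constraint $k_n=o(b_n^\eta)$ from Condition~\ref{cond}(ii) --- essentially the estimates already underlying \cite{BerBuc18} and \cite{BucJen20}; by comparison, the control of the $O(b_n)$ boundary blocks in the Hoeffding linearization is routine. (Alternatively, one could try to prove $\sigma_{\slb}^2\leq\sigma_{\djb}^2$ directly from the explicit covariance formulas \eqref{eq:sigmadb} and \eqref{eq:sigmasl}, recognizing the right-hand side as a variance and the left-hand side as an average over $\xi\in[0,1]$ of conditional variances, but this seems to require unpacking all the auxiliary laws $X_{1,\xi}^{(\tau)},X_{2,\xi},X_{3,\xi}^{(\tau)}$ and is unlikely to be shorter.)
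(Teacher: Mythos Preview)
Your proposal is correct and follows the same overall route as the paper: reduce $\Gamma_m^{\slb}\leq_L\Gamma_m^{\djb}$ to $\Sigma_m^{\slb}\leq_L\Sigma_m^{\djb}$ via the common matrix $A_m$, reduce the matrix inequality to the scalar one $a^\top\Sigma_m^{\slb}a\leq a^\top\Sigma_m^{\djb}a$, and then represent both sides as limits of variances of sliding resp.\ disjoint block sums of one and the same centered, bounded block functional (your $\Psi_{n,i}^a$ is exactly the paper's $S_{n,i}-\Exp[S_{n,i}]$). The only difference is in the final step: instead of carrying out the phase-averaging and Cauchy--Schwarz argument by hand, the paper simply verifies two autocovariance conditions---$\sup_{n,h}|\gamma_n(h)|<\infty$ from boundedness, and $\sup_n\sum_{h\ge1}|\gamma_n(h+b_n)|<\infty$ from the polynomial $\alpha$-mixing rate in Condition~\ref{cond}(ii)---and then invokes Lemma~A.10 of \cite{ZouVolBuc19}, which packages precisely the sliding-vs-disjoint variance comparison you spell out. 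That lemma absorbs the uniformity-in-phase issue you identify as the main obstacle, so the paper's proof is shorter in print but relies on the same mechanism; your direct argument is essentially what that lemma contains.
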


\begin{ex}
	In the case that the time series is serially independent, a simple calculation yields $\pi(i)=\I(i=1)$ and $\pi_2^{(\sigma)}(i,j)=(1-\sigma)\I(i=1,j=0)+\sigma \I(i=1,j=1)$, which implies 
        \[ 
	p^{(\tau)}(1) = \tau e^{-\tau}, \quad 
	p_2^{(\tau',\tau)}(1,0)= (\tau'-\tau)e^{-\tau'},\quad 
	p_2^{(\tau',\tau)}(1,1)=\tau e^{-\tau'} 
	\]
	for $\tau' \geq \tau \geq 0, \tau'\ne 0$. Lengthy computations show that $d_{1,1}^{\djb}=5/108$, such that $\sigma^{2,\djb} =\Var(v_1^\djb)=20/27\approx 0.7407$. Likewise, $\sigma^{2,\slb} =\Var(v_1^\slb)\approx 0.3790$.
	The competing blocks estimator $\hat \pi_n^{\scs (\tau),\mathrm{Rob}}$ from \cite{Rob09} is known  to satisfy
	\[ 
	\sqrt{k_n} \big\{ \hat \pi_n^{(\tau),\mathrm{Rob}}(1) - \pi(1) \big\} \wto \Nor(0,\mu^2(\tau)), \quad \mu^2(\tau) = e^{\tau} (\tau + (1-\tau)^2-e^{-\tau}). 
	\]
	see Corollary 4.2 in that reference or p.\;3300 in \cite{Rob09b}.
	It is worth to mention that $\mu^2$ is strictly increasing with $\sigma^{2, \djb} < \mu^2(\tau)$ iff $\tau > 0.7573$.
\end{ex}

Recall that $\theta= \{ \sum_{j=1}^\infty j \pi(j) \}^{-1}$. As a consequence, following \cite{Hsi91} and \cite{Rob09}, the extremal index $\theta$ may be estimated by
\[
\hat \theta_n^{\mb}(m) = \Big\{ \sum_{j=1}^{m} j \hat \pi_n^{\mb}(j) \Big\}^{-1}, \quad \mb \in \{\djb, \slb\},
\]
for sufficiently large $m$. More precisely, $\hat \theta_n^{\mb}(m)$ should be considered an estimator for the partial sum approximation $\theta(m) =  \{ \sum_{j=1}^m j \pi(j) \}^{-1}$. The following result is an immediate consequence of Corollary~\ref{Conv_vjn}, see also Corollary~4.2 in \cite{Rob09}.

\begin{cor}
Under the conditions of Theorem~\ref{theo:djb} ($\mb = \djb$) or Theorem~\ref{theo:slb} ($\mb = \slb$) we have, for any $m \in \N$ and as $n \to \infty$,
	\[ 
	\sqrt{k_n} \big\{ \hat \theta_n^\mb(m) - \theta(m) \big\} \wto -\Big\{ \sum_{j=1}^{m} j \pi(j) \Big\}^{-2} \ \sum_{j=1}^{m} j v_j^\mb \sim \Nor(0,\sigma_\mb^2(m) ), 
	\] 
	where 
$
	\sigma_\mb^2(m)  = \big\{ \sum_{j=1}^{m} j \pi(j) \big\}^{-4} \ (1,\ldots,m) \Gamma_m^\mb (1,\ldots,m)^\top.
$
\end{cor}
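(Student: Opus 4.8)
The plan is to deduce the statement from Corollary~\ref{Conv_vjn} by a one-line application of the delta method to the map $x \mapsto 1/x$.

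First I would set $S_n = \sum_{j=1}^{m} j \hat \pi_n^{\mb}(j)$ and $S = \sum_{j=1}^{m} j \pi(j)$, so that $\hat \theta_n^{\mb}(m) = S_n^{-1}$ and $\theta(m) = S^{-1}$; note $S>0$ since $\theta(m)$ is assumed to be well-defined. By Corollary~\ref{Conv_vjn} and the continuous mapping theorem applied to the linear functional $(x_1,\dots,x_m) \mapsto \sum_{j=1}^{m} j x_j$,
\[
\sqrt{k_n}(S_n - S) = \sum_{j=1}^{m} j v_{n,j}^\mb \wto \sum_{j=1}^{m} j v_j^\mb \sim \Nor\Big(0,\ (1,\dots,m)\,\Gamma_m^\mb\,(1,\dots,m)^\top\Big).
\]
In particular $S_n \pto S > 0$, hence $S_n>0$ (so that $\hat\theta_n^\mb(m)$ is well-defined) on an event whose probability tends to one, which is all that is needed to invoke the delta method.

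Next, since $g(x)=1/x$ is continuously differentiable on $(0,\infty)$ with $g'(S) = -S^{-2} = -\{\sum_{j=1}^m j\pi(j)\}^{-2}$, the delta method gives
\[
\sqrt{k_n}\big\{ \hat \theta_n^\mb(m) - \theta(m) \big\} = \sqrt{k_n}\big\{ g(S_n) - g(S) \big\} \wto g'(S) \sum_{j=1}^{m} j v_j^\mb = -\Big\{ \sum_{j=1}^{m} j \pi(j) \Big\}^{-2} \sum_{j=1}^{m} j v_j^\mb .
\]
This limit is centered Gaussian with variance $g'(S)^2\,(1,\dots,m)\,\Gamma_m^\mb\,(1,\dots,m)^\top = \{\sum_{j=1}^m j\pi(j)\}^{-4}\,(1,\dots,m)\,\Gamma_m^\mb\,(1,\dots,m)^\top = \sigma_\mb^2(m)$, which is the claimed representation.

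Every step here is a routine consequence of standard limit theorems (continuous mapping, Slutsky, the delta method) together with the already-established Corollary~\ref{Conv_vjn}, so I do not anticipate any genuine obstacle; the only point worth a sentence is that $\hat\theta_n^\mb(m)$ is merely well-defined with probability tending to one, which is harmless and follows from $S_n \pto S > 0$.
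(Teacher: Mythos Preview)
Your proposal is correct and matches the paper's approach: the paper does not spell out a proof but states the result is ``an immediate consequence of Corollary~\ref{Conv_vjn}'', which amounts precisely to your delta-method argument for $g(x)=1/x$ applied to the linear combination $\sum_{j=1}^m j\,\hat\pi_n^\mb(j)$.
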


\section{Finite-sample results} \label{sec:sim}

A simulation study was carried out to analyze the finite-sample performance of the introduced estimators and to compare them with estimators from the literature. Results are presented for the following three time series models which were also considered in \cite{Rob09} (with a slightly different ARMAX-model).
\begin{compactitem}
	\item \textbf{ARMAX-model}:
	\[ X_s = \max \{ \alpha X_{s-1},(1-\alpha) Z_s \}, \qquad s \in \Z, \]
	where $\alpha \in [0,1)$ and $(Z_s)_s$ is an i.i.d. sequence of standard Fréchet random variables. We consider $\alpha = 0.5$ resulting in $\theta = 0.5$ and $\pi(1)=0.5$, $\pi(2)=0.25$, $\pi(3)=0.125$, $\pi(4)=0.0625$ and $\pi(5)=0.03125$ by \cite{Per94}.
	\item  \textbf{Squared ARCH-model}:
	\[
	X_s = (2 \times 10^{-5} + \lambda X_{s-1}) Z_s^2,  \qquad s \in \Z,
	\]
	where $\lambda\in(0,1)$  and where $(Z_s)_s$ denotes an i.i.d.\ sequence of standard normal random variables. We consider $\lambda = 0.5$, for which the simulated values $\theta = 0.727$ and $\pi(1)=0.751$, $\pi(2)=0.168$, $\pi(3)=0.055$, $\pi(4)=0.014$ and $\pi(5)=0.008$ were obtained in \cite{DehResRooVri89}.
	\item \textbf{AR-model}:
	\[ X_s = r^{-1} X_{s-1} + Z_s, \qquad s \in \Z, \]
	where $(Z_s)_s$ is an i.i.d.\ sequence of random variables that  are uniformly distributed on $\{0,1/r,\ldots,(r-1)/r\}$. We consider $r=4$, for which the simulated values $\theta = 0.75$ and $\pi(1)=0.75$, $\pi(2)=0.1875$, $\pi(3)=0.0469$, $\pi(4)=0.0117$ and $\pi(5)=0.0029$ were obtained in \cite{Per94}.
\end{compactitem}

In all scenarios the sample size was fixed to $n=2\,000$ and the block size $b$ was chosen from the set $\{6,8,\ldots,36,38\}$. All results are based on $N=500$ simulation runs each. 

For completeness, and inspired by \cite{Nor15}, a slight modification of the estimators from Section~\ref{sec:math} has been considered as well. For its motivation, note that $X_s > M_{ni}^{\mb}$ iff $\hat{F}_n(X_s) > 1-\hat{Z}^{\mb}_{ni}/b_n$ (a.s.), where $\hat{Z}^{\mb}_{ni}=b_n \{ 1-\hat{F}_n(M_{ni}^{\mb}) \}$ with the empirical c.d.f.\ $\hat F_n(x)=n^{-1} \sum_{i=1}^n \ind(X_i \le x)$. For large block size $b_n$, we further have $\hat{Z}^{\mb}_{ni} \approx \hat Y_{ni}^\mb = -b_n \log \hat{F}_n(M^{\mb}_{ni})$, which suggests to define
\begin{align*} 
\pbh_n^{\,y,\djb} (m)
&= 
\frac{1}{k_n(k_n-1)} \sum_{\substack{i,i'=1 \\ i \neq i'}}^{k_n} \I \bigg\{ \sum_{s \in I_{i'}} \I\big( \hat{F}_n(X_s) > 1-\hat{Y}^{\djb}_{ni}/b_n \big)=m \bigg\}, \\
\pbh_n^{\,y,\slb}(m)
&= 
\frac{1}{|D_n|} \sum_{(i,i') \in D_n} \I \bigg\{ \sum_{s \in I_{i'}^{\slb}} \I \Big( \hat{F}_n(X_s) > 1-\hat{Y}^{\slb}_{ni}/b_n \Big) =m \bigg\}.
\end{align*} 
Finally, let $\hat \pi_n^{\, y, \mb}$ be defined in terms of $\pbh_n^{\,y,\mb}$ as in (\ref{eq:pihat}). For the ease of a unified notation, the estimators from Section~\ref{sec:math} will subsequently be denoted by $\pbh_n^{\,z,\mb}$ and $\hat \pi_n^{\, z, \mb}$.


\subsection{Comparison of the introduced estimators for $\pi$}

In this section we compare the finite-sample performance of the introduced four estimators $\hat \pi_n^{z,\djb}$, $\hat \pi_n^{z,\slb}$, $\hat \pi_n^{y,\djb}$ and $\hat \pi_n^{y,\slb}$. 

We start with a detailed analysis of the variance, bias and mean squared error (MSE) as a function of the block size parameter $b$. Results are only reported for the squared ARCH-model;  the corresponding figures for the ARMAX- and AR-model show roughly the same qualitative behavior and can be found in Appendix \ref{more_figures}.
The variance is depicted in Figure \ref{Fig:sqARCH_Var}, which can be seen to  be increasing in the block size for all estimators. It is further apparent that the $Z$- and $Y$-versions behave nearly identical, whereas the variance of the sliding blocks estimators is considerably smaller than for the disjoint blocks estimators, uniformly over all block sizes. For the $Z$-version, this is in accordance with the theoretical result from Theorem \ref{Var_comp}. 

The bias is presented in Figure \ref{Fig:sqARCH_Bias} and can be seen to be either  increasing or  decreasing in $b$. The largest absolute value of the bias is mostly attained for small block sizes. The bias curves for the sliding blocks estimators are smoother than for the disjoint blocks versions, which may be explained by the fact that no observations have to be discarded when $b$ is not a divisor of $n$. One can further see that the $Y$-versions exhibit a substantially smaller absolute bias for small block sizes (except for $m=2$); an observation that has also been made in \cite{BucJen20}.
However, we observe that neither of our estimators can be said to be overall superior with regard to the smallest bias. 

The mean squared error is outlined in Figure \ref{Fig:sqARCH_MSE}. In many cases, the MSE-curves show a similar behavior as the variance-curves for large block sizes, since there the variance is dominating over the squared bias. Likewise, the large squared bias for small block sizes can be identified in the MSE-curves as well, eventually resulting in a typical  u-shape. Again, the $Y$-versions perform better for small block sizes (except for $m=2$). Moreover, the sliding blocks estimators outperform the disjoint blocks estimators with regard to the MSE. Since this qualitative behavior holds uniformly over all models under consideration, we omit the disjoint blocks estimators in the subsequent discussion. 

\begin{figure} [t!]
	\begin{center}
		\includegraphics[width=\textwidth]{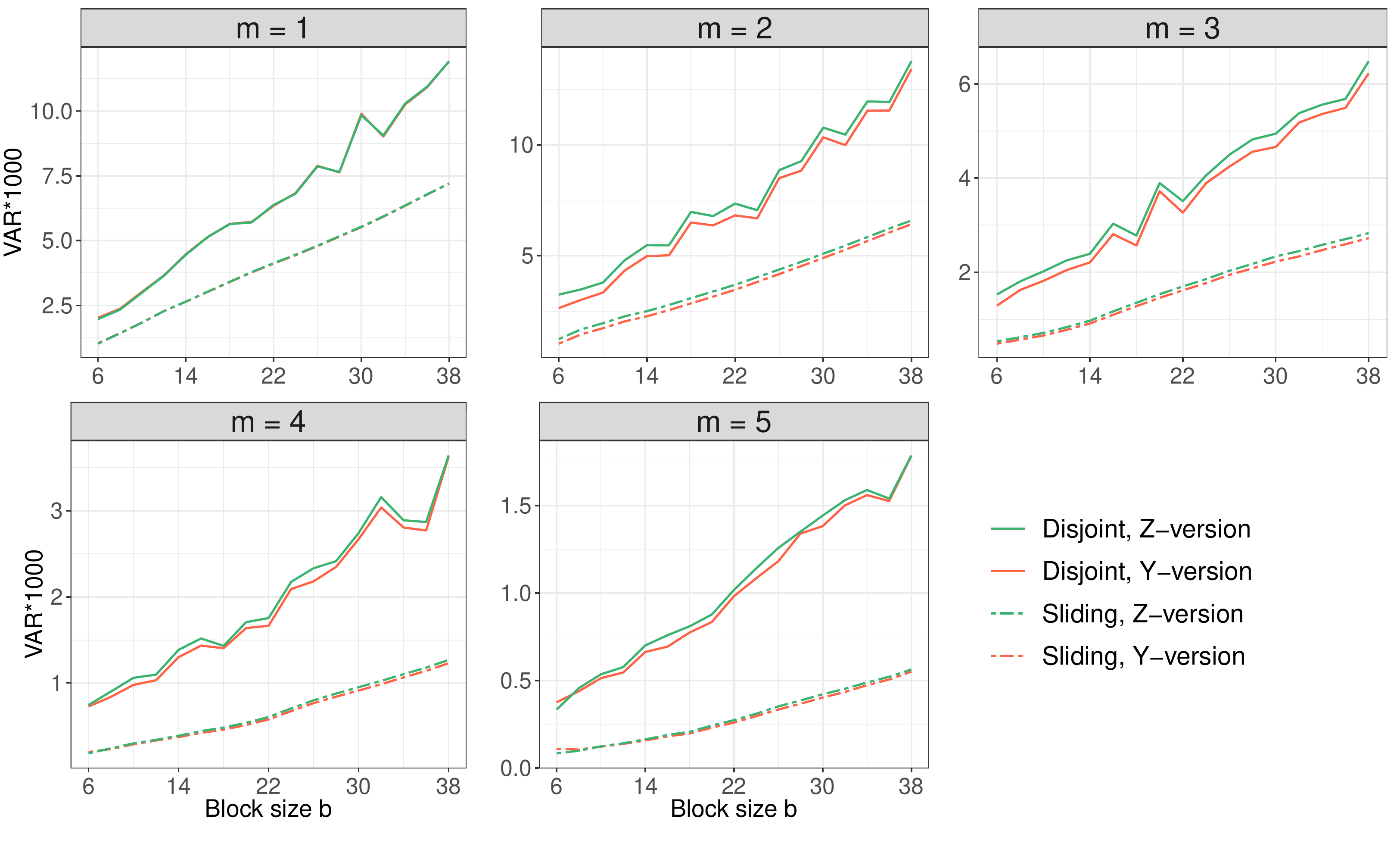} \vspace{-.8cm}
		\caption{Variance multiplied by $10^3$ for the estimation of $\pi(m)$ within the squared ARCH-model for $m=1,\ldots,5$.} 
		\vspace{-.3cm} 
		\label{Fig:sqARCH_Var}
	\end{center}
\end{figure}

\begin{figure} [t!]
	\begin{center}
		\includegraphics[width=\textwidth]{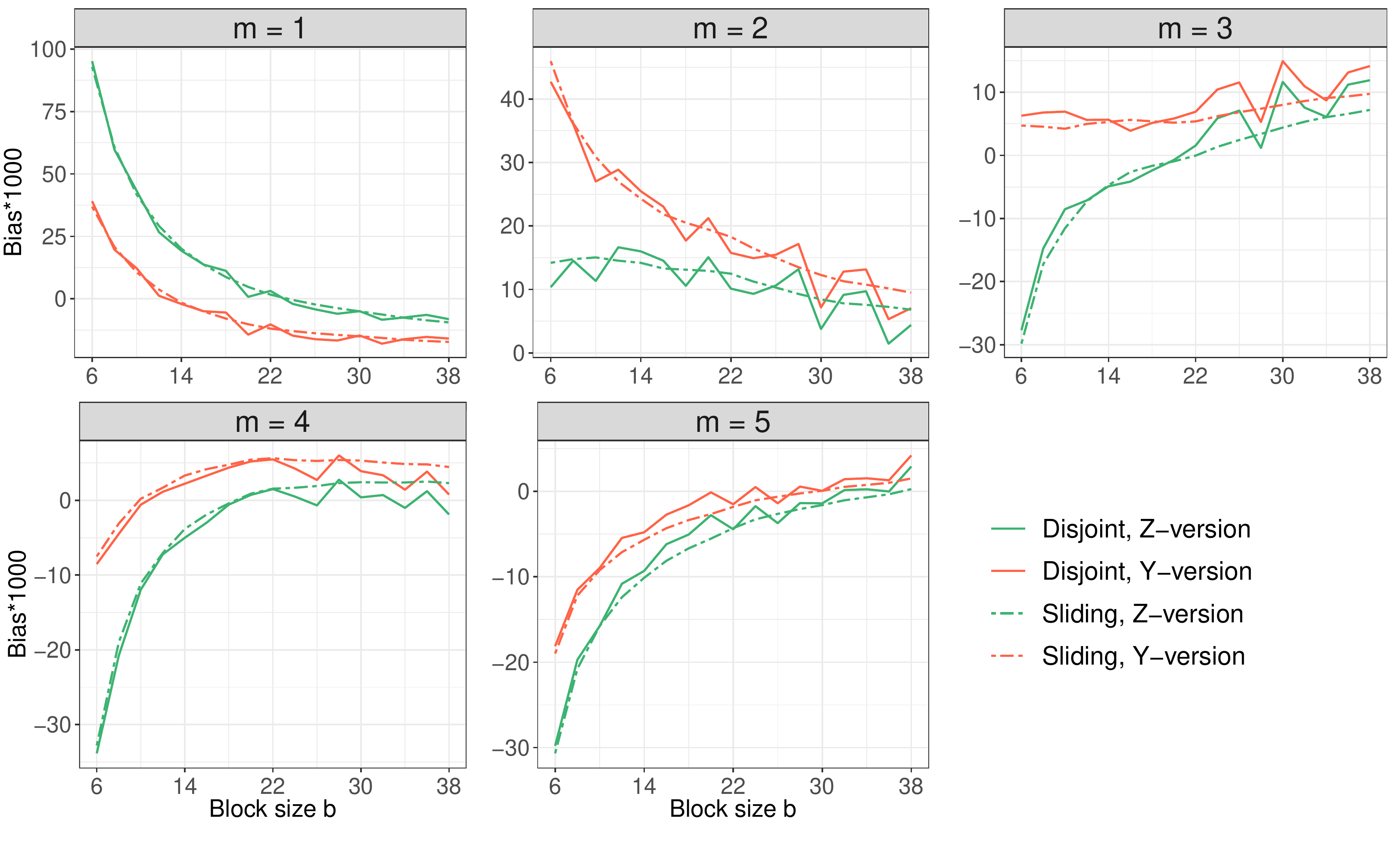} \vspace{-.8cm}
		\caption{Bias multiplied by $10^3$ for the estimation of $\pi(m)$ within the squared ARCH-model for $m=1,\ldots,5$.} 
		\vspace{-.3cm} 
		\label{Fig:sqARCH_Bias}
	\end{center}
\end{figure}

\begin{figure} [t!]
	\begin{center}
		\includegraphics[width=\textwidth]{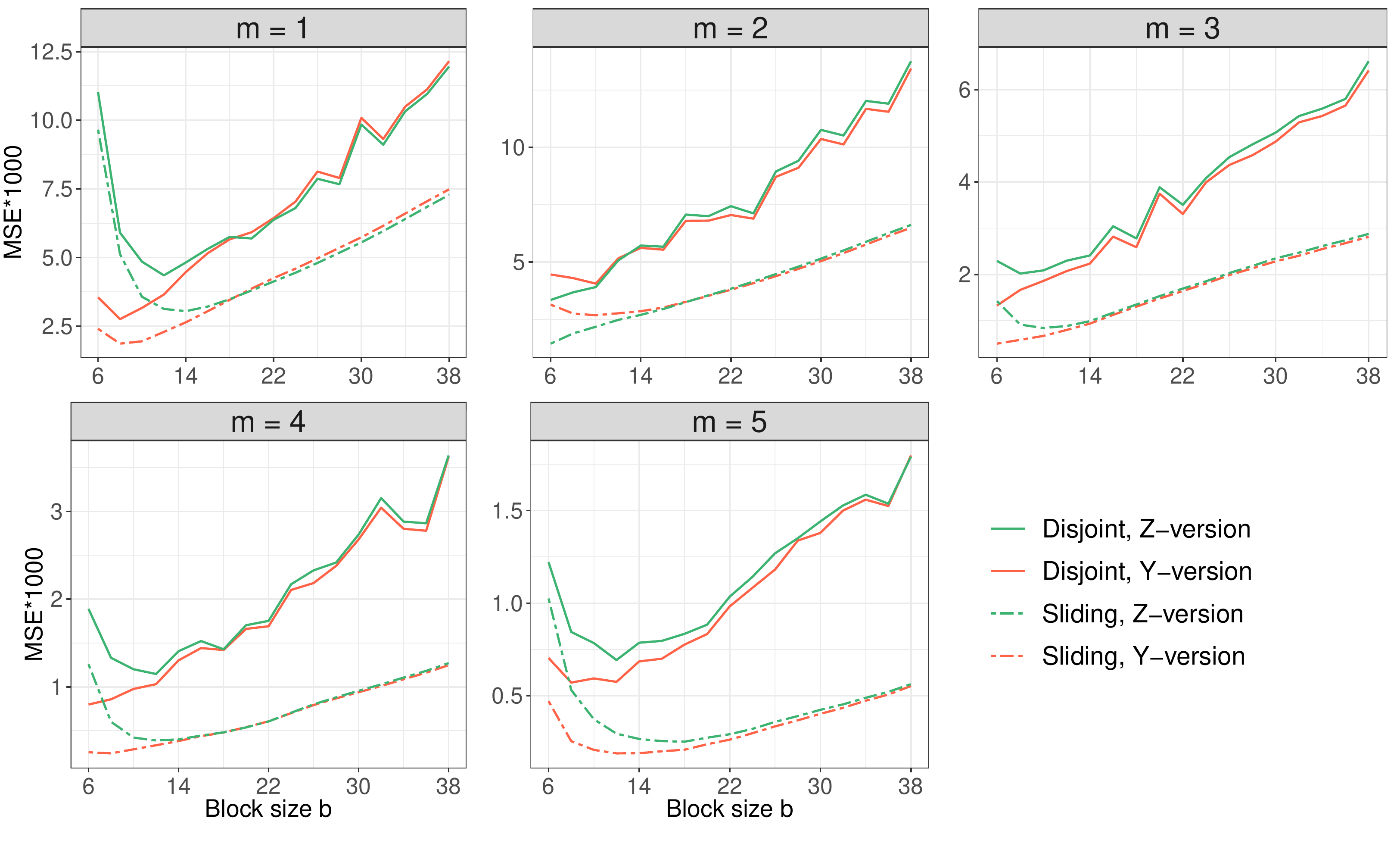} \vspace{-.8cm}
		\caption{Mean squared error multiplied by $10^3$ for the estimation of $\pi(m)$ within the squared ARCH-model for $m=1,\ldots,5$.} 
		\vspace{-.3cm} 
		\label{Fig:sqARCH_MSE}
	\end{center}
\end{figure}


\subsection{Comparison with competing estimators for $\pi$}

In this section, we compare the performance of our sliding blocks estimators for $\pi(m)$ with the following competitors from the literature: the integrated version of the blocks estimator from \cite{Rob09} with parameters $\sigma = 0.7$ and $\phi=1.3$ (page 276 in that reference), the blocks estimator from \cite{Hsi91} with $v_n=X_{n-\lfloor n/s_n \rfloor:n}$, where $s_n=2(b_n-3)$ (see (1.4) in \citealp{Hsi91} and (1.2) in \citealp{Rob09}, where a similar same choice has been made), and the inter-exceedance times estimator from \cite{Fer03} with $N=3k_n$ (see equation (4.12) in that reference).

In Figure \ref{Fig:sqARCH_MSE_all}, the MSE is plotted as a function of the blocksize in the squared ARCH-model (see Appendix~\ref{more_figures} for other models). We can see that the MSE is mostly decreasing for small blocksizes and tends to increase from an intermediate blocksize onwards, which is due to the common bias-variance-tradeoff. The MSE-curves of our sliding blocks estimators are very smooth compared to the competing estimators and lie uniformly below their MSE-curves in many cases. Generally, the estimator by Robert and our sliding blocks estimators outperform the estimators by Ferro and Hsing in almost all scenarios under consideration. 

The minimum values of the mean squared error (minimum over $b$) are of particular interest. They are presented for all models under consideration in Table \ref{minMSE_table}. The estimator $\hat \pi^{z,\slb}_n$ wins twice, $\hat \pi^{y,\slb}_n$ wins seven times and Robert's estimator five times, while the estimators by Ferro wins once. It is worth to mention that the sliding blocks estimators cover all minimum values within the ARMAX-model, and Robert's estimator seems to perform especially well for large values of $m$. 

\begin{figure} [t!]
	\begin{center}
		\includegraphics[width=\textwidth]{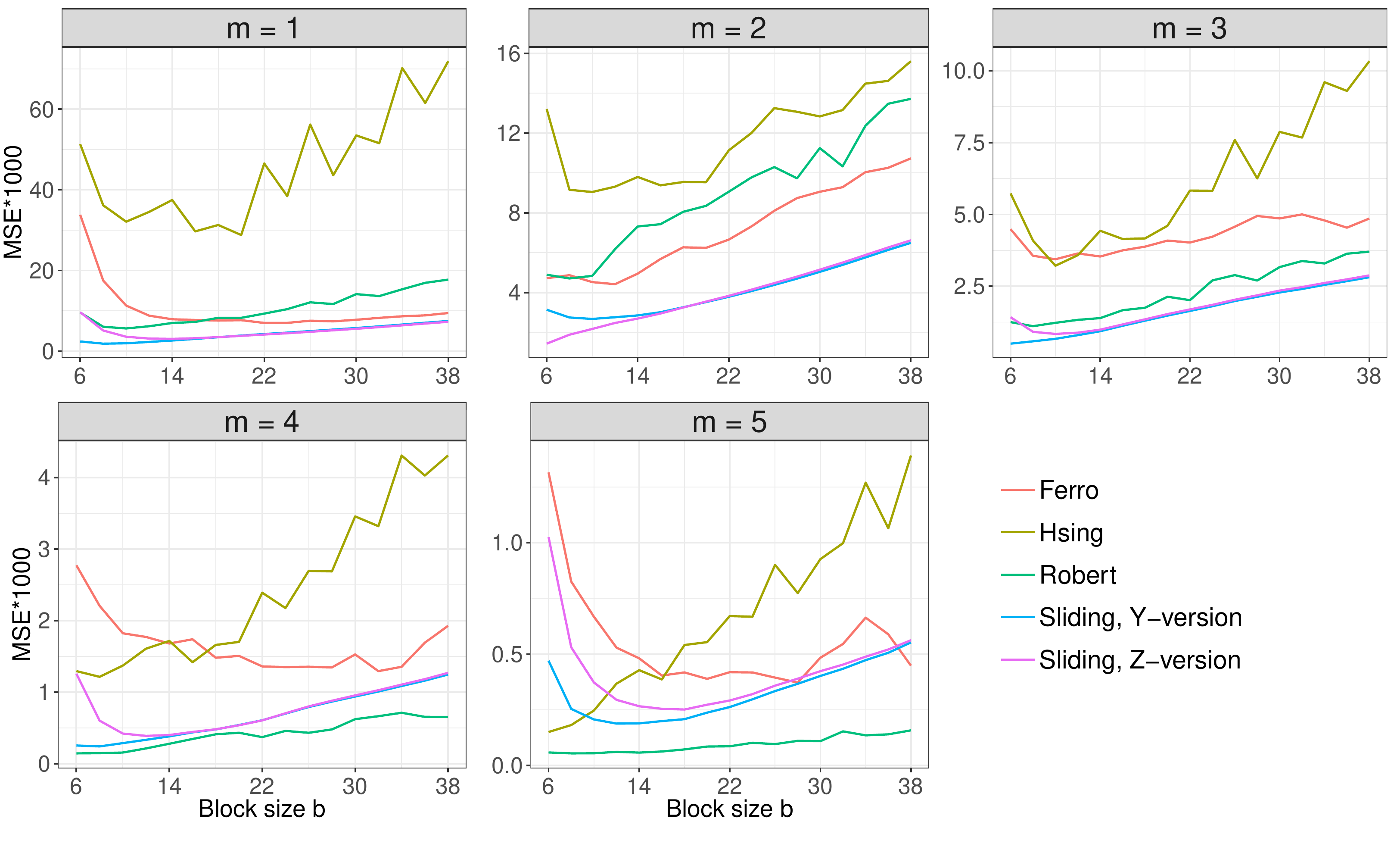} \vspace{-.8cm}
		\caption{Mean squared error multiplied by $10^3$ for the estimation of $\pi(m)$ within the squared ARCH-model for $m=1,\ldots,5$.} 
		\label{Fig:sqARCH_MSE_all}
	\end{center}
\end{figure}

\begin{table}[ht]
	\centering
		\bgroup
		\def\arraystretch{1.2}
	\begin{tabular}{crrrrrrr}
		\hline \hline
		Model & m & $\pi(m)$ & Sliding, Z & Sliding, Y & Robert & Hsing & Ferro \\
		\hline
		AR &    1 & 0.750 & 8.255 & 6.746 & 12.951 & 20.094 & \bf{4.007} \\
		&    2 & 0.188 & 2.374 & \bf{1.636} & 8.732 & 7.352 & 3.683 \\
		&    3 & 0.047 & 1.679 & 1.301 & \bf{1.090} & 2.864 & 1.423 \\
		&    4 & 0.012 & 0.861 & 0.497 & \bf{0.113} & 0.277 & 0.236 \\
		&    5 & 0.003 & 0.159 & 0.088 & \bf{0.008} & 0.017 & 0.035 \\
		ARMAX &    1 & 0.500 & 2.642 & \bf{1.650} & 6.819 & 5.318 & 5.343 \\
		&    2 & 0.250 & 0.495 & \bf{0.434} & 2.177 & 1.586 & 3.460 \\
		&    3 & 0.125 & \bf{0.186} & 0.311 & 1.763 & 1.816 & 2.118 \\
		&    4 & 0.062 & 0.252 & \bf{0.179} & 1.144 & 1.011 & 2.454 \\
		&    5 & 0.031 & 0.206 & \bf{0.086} & 0.474 & 0.390 & 2.350 \\
		sqARCH &    1 & 0.751 & 3.044 & \bf{1.860} & 5.631 & 28.795 & 7.001 \\
		&    2 & 0.168 & \bf{1.436} & 2.677 & 4.706 & 9.043 & 4.418 \\
		&    3 & 0.055 & 0.842 & \bf{0.503} & 1.111 & 3.214 & 3.439 \\
		&    4 & 0.014 & 0.389 & 0.242 & \bf{0.145} & 1.215 & 1.294 \\
		&    5 & 0.008 & 0.251 & 0.188 & \bf{0.055} & 0.150 & 0.372 \\
		\hline \hline
	\end{tabular}
	\egroup
	\caption{Minimal mean squared error multiplied by $10^3$ for the AR-model, the maxAR-model and the squared ARCH-model. The estimator with the row-wise smallest MSE is in boldface.}
	\label{minMSE_table}
\end{table}

\section{Proofs of the main results} \label{sec:proofs}

We start by arguing that we may slightly redefine the estimators, which will greatly simplify the notational complexity. For $m\in\N_{\ge 0}$, let
\begin{align*} 
 \tilde{\bar{p}}_n^{\,\djb}(m)
 &= \frac{1}{k_n^2} \sum_{i,i'=1}^{k_n} \I \bigg\{ \sum_{s \in I_{i'}^\djb} \I\big(X_s > M_{ni}^\djb \big)=m \bigg\}, \\
\tilde{\bar p}_n^{\,\slb}(m)
&= \frac{1}{(n-b_n+1)^2} \sum_{i,i'=1}^{n-b_n+1} \I \bigg\{ \sum_{s \in I_{i'}^{\slb}} \I \big(X_s >  M_{ni}^\slb \big) =m \bigg\}.
\end{align*}
Since $ \sum_{s \in I_{i}^\djb} \I\big(X_s > M_{ni}^\djb \big) = \I(m=0)$ and $|\tilde{\bar{p}}_n^{\, \djb}| \leq 1$, we have, for $m\ge 1$,
	\begin{align*}
		\tilde{\bar{p}}^{\,\djb}_n(m) - \hat{\bar{p}}^{\,\djb}_n(m) 
		&= \Big( 1- \frac{k_n}{k_n-1} \Big) \tilde{\bar{p}}^{\,\djb}_n(m) 
		= o_\Prob(k_n^{-1/2}).
	\end{align*}
As a consequence, throughout the proof, we may redefine $\hat{\bar{p}}^{\,\djb}_n(m) = \tilde{\bar{p}}^{\,\djb}_n(m)$. A similar argument holds for the sliding blocks version, whence we subsequently set $\hat{\bar{p}}^{\,\slb}_n(m) = \tilde{\bar{p}}^{\,\slb}_n(m)$.

Next, we will introduce some additional notation. For $s\in\Z$, let $U_s=F(X_s)$.
For $\tau>0$ and $m\in\N_{\ge 0}$, let
\begin{align*}
p_{n}^{(\tau), \djb} (m) 
&=
\frac1{k_n} \sum_{i=1}^{k_n} \I\big( N_{b_n,i}^{(\tau), \djb}=m\big),  \\
p_{n}^{(\tau), \slb} (m) 
&=
\frac1{n-b_n+1} \sum_{i=1}^{n-b_n+1} \I\big( N_{b_n,i}^{(\tau), \slb}=m\big), 
\end{align*}
where, for $\mb \in \{\djb, \slb\}$,
\begin{align*}
N_{b_n,i}^{(\tau), \mb} 
&=  \sum\nolimits_{s \in I_{i}^{\mb}} \I \Big(U_s > 1-\frac{\tau}{b_n}  \Big).
\end{align*}
Denote the rescaled estimation error by
\begin{align} \label{eq:enj}
e_{n,m}^\mb(\tau) 
= 
\sqrt{k_n} \Big\{ p_n^{(\tau), \mb}(m) - \varphi_{n,m}(\tau) \Big\}, 
\end{align}
where $\varphi_{n,m}$ is defined in Condition~\ref{cond}(iv).
Note that the disjoint blocks version $e_{n,m}^\djb$ has been extensively studied in \cite{Rob09}. Next, let $Z_{ni}^\mb=b_n\{1-F(M_{ni}^\mb)\}$ and, for $x>0$, let 
\[
\hat H_{n}^\djb(x) = \frac{1}{k_n} \sum_{i=1}^{k_n} \I(Z_{ni}^\djb \leq x),
\quad
\hat H_{n}^\slb(x) = \frac{1}{n-b_n+1} \sum_{i=1}^{n-b_n+1} \I(Z_{ni}^\slb \leq x),
\] 
denote the empirical c.d.f.\ of $Z_{n1}^{\djb},\ldots,Z_{nk_n}^\djb$ and $Z_{n1}^{\slb},\ldots,Z_{n,n-b_n+1}^\slb$, respectively. Finally, recall $H(x) = (1-e^{-\theta x})\I(x\geq 0)$, the c.d.f.\ of the exponential distribution with parameter $\theta$. 

\ \\
\textbf{Proof of Theorem \ref{theo:djb}}. 
By continuity of $F$, we have $U_s > 1-Z_{ni}^{\djb}/b_n$ iff $X_s > M_{ni}^{\djb}$ almost surely, whence we may write,  for $j\in\N_{\ge 1}$,
\[
\hat{\bar{p}}_n^{\,\djb}(j) \stackrel{a.s.}{=} k_n^{-1} \sum_{i=1}^{k_n} p_n^{(Z_{ni}^\djb), \djb}(j).
\]
We may thus decompose
\begin{align} \label{dec_sjn}
	s_{n,j}^{\djb} = \sqrt{k_n} \{ \hat{\bar{p}}^{\,\djb}_n(j) - \bar{p}(j) \}  
	\stackrel{a.s.}{=} 
	A_{n1} + A_{n2} + A_{n3},
\end{align}
where
\begin{align*}
	A_{n1}
	&= 
	\frac{1}{\sqrt{k_n}} \sum_{i=1}^{k_n} \bigg\{ \intnu \I\big(N_{b_n,i}^{(\tau), \djb}=j\big) - \varphi_{n,j}(\tau) \ \mathrm{d}H(\tau) \\
	& \hspace{5cm}+ \varphi_{n,j}(Z_{ni}^\djb) -  \Exp[\varphi_{n,j}(Z_{n1}^\djb)] \bigg\}, \\
	A_{n2}
	&= 
	\intnu e_{n,j}^\djb(\tau) \ \mathrm{d}(\hat H_{n}^\djb-H)(\tau),  \qquad
	A_{n3} 
	=
	\sqrt{k_n} \big\{ \Exp[\varphi_{n,j}(Z_{n1}^\djb)] - \bar p(j) \big\}.
\end{align*}
We have $A_{n3}=o(1)$ by Condition \ref{cond}(v) and $A_{n2}=o_{\Prob}(1)$ by Lemma~ \ref{dj_ejn_integral}. Hence, setting 
\begin{multline} \label{eq:wni}
	W_{n,i}^\djb(j) 
	= 
	\intnu \I\big(N_{b_n,i}^{(\tau), \djb}=j\big) - \varphi_{n,j}(\tau)\, \mathrm{d}H(\tau) \\
	+ \varphi_{n,j}(Z_{ni}^\djb) -  \Exp[\varphi_{n,j}(Z_{ni}^\djb)],
\end{multline}
we have $s_{n,j}^\djb = k_n^{-1/2} \sum_{i=1}^{k_n} W_{n,i}^\djb(j) + \op$. The assertion  then follows from 
\[ 
\frac{1}{\sqrt{k_n}} \sum_{i=1}^{k_n} \big(W_{n,i}^\djb(1), \ldots, W_{n,i}^\djb(m) \big) 
\wto 
\Nor_{m}(0,\Sigma_{m}^\djb) 
\] as a consequence of Lemma \ref{dj_weakconv}.
\qed

\ \\
\textbf{Proof of Theorem \ref{theo:slb}}. As in the proof of Theorem~\ref{theo:djb}, we have
\[
	\hat{\bar{p}}_n^{\,\slb}(j) \stackrel{a.s.}= \frac{1}{n-b_n+1} \sum_{i=1}^{n-b_n+1} p_n^{(Z_{ni}^{\slb}),\slb}(j).
\]
Similarly as in (\ref{dec_sjn}) and by using the bias Condition \ref{cond}(vi), we can thus write
\begin{align*}
s_{n,j}^{\slb} &\,=\,  \sqrt{k_n} \big\{ \hat{\bar p}_n^{\,\slb}(j) - \bar p(j) \big\} \\
 &\stackrel{a.s.}= 
 \frac{\sqrt{k_n}}{n-b_n+1} \sum_{i=1}^{n-b_n+1} W_{n,i}^{\slb}(j) + \intnu e_{n,j}^{\slb}(\tau) \ \mathrm{d} (\hhutsl -H)(\tau) + o(1), 
\end{align*}
where $W_{n,i}^{\slb}$ is defined as in \eqref{eq:wni}, but with `$\djb$' replaced by `$\slb$' everywhere. The assertion then follows from $\intnu e_{n,j}^{\slb} \ \mathrm{d} (\hat H_n^{\slb} -H)=o_\Prob(1)$ by Lemma \ref{sl_ejn_integral} and 
 \[ 
 \frac{\sqrt{k_n}}{n-b_n+1} \sum_{i=1}^{n-b_n+1} \big(W^{\slb}_{n,i}(1), \ldots, W^{\slb}_{n,i}(m)\big)
 \wto \Nor_{m}(0,\Sigma_{m}^{\slb})  
 \] by Lemma \ref{sl_weakconv}.
 \qed

\ \\
\textbf{Proof of Theorem \ref{Conv_vjn}}. Throughout, we omit the index $\mb\in\{\djb, \slb\}$. For $j\in\N_{\ge1}$, set $\varphi_j : \R^{2j-1} \to \R, \varphi_j(x) = 4x_j-2 \sum_{k=1}^{j-1} x_{2j-k} x_k$, such that
 \begin{align*}
  \hat \pi_n(j) &= \varphi_j(\hat{\bar p}_n(1), \ldots, \hat{\bar p}_n(j), \hat \pi_n(1),\ldots, \hat \pi_n(j-1)), \\
\pi(j) &= \varphi_j(\bar p(1), \ldots, \bar p(j), \pi(1),\ldots, \pi(j-1)).
 \end{align*}
 By Theorem \ref{theo:djb} and \ref{theo:slb}, we know that $(s_{n,1},\ldots,s_{n,m}) \wto (s_1,\ldots,s_m) \sim \Nor_m(0,\Sigma_m)$. To prove the theorem, we use this result and apply  induction over $m$. First, 
 \[
 v_{n,1} = \sqrt{k_n} \big\{ \hat \pi_n(1) - \pi(1) \big\} = 4 \sqrt{k_n} \big\{ \hat{\bar p}_n(1) - \bar p(1) \big\} = 4s_{n,1},
 \] 
 such that $(s_{n,1},s_{2,n},v_{n,1}) \wto (s_1,s_2,4s_1) = (s_1, s_2, v_1)$. Second, assume we have 
 \begin{equation*}
  (s_{n,1},\ldots,s_{n,m},v_{n,1},\ldots,v_{m-1,n}) \wto (s_1,\ldots,s_m,v_1,\ldots,v_{m-1})
 \end{equation*}
 for $m \geq 2$. Then, the delta-method implies
  \begin{align*}
  v_{n,m} 
  &= \sqrt{k_n} \big\{ \varphi_m(\hat{\bar p}_n(1), \ldots, \hat{\bar p}_n(m), \hat \pi_n(1),\ldots, \hat \pi_n(m-1)) - \nonumber \\ 
  & \hspace{4cm} \varphi_m(\bar p(1), \ldots, \bar p(m), \pi(1),\ldots, \pi(m-1)) \big\} \nonumber \\
  &= \varphi_m'(\bar p(1), \ldots, \bar p(m), \pi(1),\ldots, \pi(m-1)) \\
  & \hspace{4cm} \cdot (s_{n,1},\ldots,s_{n,m},v_{n,1},\ldots,v_{n,m-1})^\top + o_\Prob(1) \\
  &\wto  4s_m - 2 \sum_{k=1}^{m-1} \pi(m-k)s_k - 2 \sum_{k=1}^{m-1} \bar p(m-k) v_{k}=: v_m, \nonumber
 \end{align*}
where $\varphi_m'$ denotes the gradient of $\varphi_m$. We obtain that 
\[
(s_{n,1}, \ldots, s_{n,m},v_{n,1},\ldots,v_{n,m}) \wto (s_1,\ldots,s_m,v_1,\ldots,v_m). 
\]
Since every $v_j$ is a linear function of $(s_1,\ldots,s_m) \sim \Nor_m(0,\Sigma_m)$, the vector $(v_1,\ldots,v_m)$ follows an $m$-dimensional normal distribution as well.
\qed



\ \\
\textbf{Proof of Theorem \ref{Var_comp}}. 
We only need to prove $\Sigma_m^{\slb} \leq_L \Sigma_m^\djb$; the assertion regarding $\Gamma_m^{\mb}$ is an immediate consequence.

In the following, we assume for simplicity that $U_s$ and $Z_{ni}^{\slb}$ are measurable with respect to the $\mathcal{B}^{\varepsilon}_{\cdot : \cdot}$-sigma fields; the general case can be treated by multiplication with suitable indicator functions as in the proofs in the appendices. 
Now, $\Sigma_m^{\slb} \leq_L \Sigma_m^\djb$ is equivalent to
\begin{align}
\Var \Big( \sum_{j=1}^{m} a_j s_j^{\slb} \Big) \leq \Var \Big( \sum_{j=1}^{m} a_j s_j^\djb \Big) \label{var_sum}
\end{align}
for any $a=(a_1,\ldots,a_m)^\top \in \R^m$. To prove the latter, we are going to apply Lemma A.10 in \cite{ZouVolBuc19}.
For $j \in\{1,\ldots, m\}$ and $i\in\N_{\ge 1}$, let $S_{n,i} = \sum_{j=1}^{m} a_j V_{n,i}(j)$, where

\begin{align*}
V_{n,i}(j) = \intnu \I \Big( \sum_{s \in J_i} \I \Big(U_s>1-\frac{\tau}{b_n}\Big) = j \Big) \, \di H(\tau) + \varphi_{n,j}\Big(b_n ( 1- \max_{s \in J_i} U_s)\Big) 
\end{align*}
and where $J_i=\{i, i+1, \dots, i+b_n-1\}$.
Note that $I_i^\djb= J_{(i-1)b_n+1}$ for $i\in\{1, \dots, k_n\}$ and that $I_i^{\slb}=J_i$ for $i\in\{1, \dots, n-b_n+1\}$.
By the proofs of Theorem \ref{theo:djb} and \ref{theo:slb} we can write 
\begin{align*}
	\Var \Big( \sum_{j=1}^{m} a_j s_j^{\slb} \Big) &= \lim_{n \to \infty	} \Var \Big( \sqrt{\frac{n}{b_n}} \frac{1}{n} \sum_{i=1}^{n} S_{n,i} \Big), \\
	\Var \Big( \sum_{j=1}^{m} a_j s_j^\djb \Big) &= \lim_{n \to \infty	} \Var \Big( \sqrt{\frac{n}{b_n}}  \frac{b_n}{n} \sum_{i = 1}^{ \lfloor n/b_n \rfloor} S_{n,(i-1)b_n+1} \Big).
\end{align*}
For $ h \in \N_{\ge 0}$, set $\gamma_n(h) = \Cov(S_{n,1}, S_{n,h+1})$; note that $S_{n,1},\ldots, S_{n,n-b_n+1}$ is stationary. 
We obtain 
\begin{align*}
	|\gamma_n(h)| 
	& \leq 
	\sum_{j,j'=1}^{m} |a_j a_{j'}| \big| \Cov (V_{n,1}(j), V_{n,h+1}(j')) \big| \leq 8 \sum_{j,j'=1}^{m} |a_j a_{j'}|,
\end{align*}
such that $\sup_{n \in \N, h \in \N_{\ge 0}} |\gamma_n(h)| < \infty$.
Further, by Lemma 3.9 in \cite{DehPhi02} we have 
\begin{align*}
	|\gamma_n(h+b_n)| & \leq 4 \sum_{j,j'=1}^{m} |a_j a_{j'}| \ ||V_{n,1}(i)||_{\infty} \ ||V_{n,1}(j)||_{\infty} \ \alpha_{\eps_1}(1+h) \lesssim
	 \alpha_{\eps_1}(h)
\end{align*}
with $\varepsilon$ from Condition~\ref{cond}. This implies 
\[ 
\sum_{h=1}^{\infty} |\gamma_n(h+b_n)| 
\lesssim \sum_{h=1}^{\infty} \alpha_{\eps_1}(h) 
\lesssim \sum_{h=1}^{\infty} h^{-\eta} < \infty
\] 
by Condition \ref{cond}(iii).
Relation (\ref{var_sum}) then follows from Lemma A.10 in \cite{ZouVolBuc19}.
\qed

\section{On sliding blocks estimators for compound probabilities} \label{sec:comp}

Throughout this section, we derive an extension of Theorem 4.1 in \cite{Rob09} from the disjoint blocks process $e_{n,m}^{\scs \djb}$ in \eqref{eq:enj} to the sliding blocks version $e_{n,m}^{\scs \slb}$. The result is used for proving Theorem~\ref{theo:slb}, but might in fact be of general interest for statistics for  time series extremes based on sliding blocks. For $m\in\N_{\ge 0}$ and $\tau \geq 0$, let
\[ 
E_{n,m}^{\slb}(\tau) = \big( e_{n,0}^{\slb}(\tau), \ldots, e_{n,m}^{\slb}(\tau)\big).
\] 
For simplicity, we impose the same mixing conditions as needed for the results in Section~\ref{sec:main}.

\begin{thm} \label{weak_conv}
	Suppose that Condition~\ref{cond}(i)--(ii)  is met  and that, additionally, $\sqrt{k_n} \beta_{\varepsilon_2}(b_n) =o(1)$ for some $\eps_2>0$. Then,  for any $m \in \N_{\ge 1}$,
	\begin{align*}
	E_{n,m}^{\slb} \wto E_m^{\slb} \quad \textrm{in} \quad D([0,\infty))^{m+1},
	\end{align*}
	where $E_m^{\slb}(\cdot)=\big( e_{0}^{\slb}(\cdot), \ldots, e_{m}^{\slb}(\cdot)\big)$ is a centered Gaussian process with continuous sample paths, almost surely, and with covariance functional given by, for $0 \leq \tau \leq \tau'$ and $j,j'\in\{0 , \dots,  m\}$,
	\begin{align*}
    \Cov \big( e_{j}^{\slb}(\tau), e_{j'}^{\slb}(\tau') \big)
    & =  2 \intne \Cov \big( \I(X_{\xi}^{(\tau)} = j), \I(Y_{\xi}^{(\tau')} = j') \big) \, \di \xi \\
    & =  2 \intne H_{j,j'}^{(\tau,\tau')}(\xi) \, \di \xi - 2 p^{(\tau)}(j) p^{(\tau')}(j'),
    \end{align*}
    where  $X_{\xi}^{(\tau)} = Y_{\xi}^{(\tau)} = N_E^{(\tau)}$ in distribution with joint probability mass function
    \begin{align*}
    H_{j,j'}^{(\tau,\tau')}(\xi)
    &= \Pro \big( X_{\xi}^{(\tau)}  = j, Y_{\xi}^{(\tau')} =  j' \big)  \\
    &= \! \sum_{l=0}^j \sum_{r=j-l}^{j '}p^{(\xi\tau)}(l) p^{(\xi\tau')}(j'-r) p_2^{((1-\xi)\tau',(1-\xi)\tau)}(r,j-l). 
    \end{align*}
\end{thm}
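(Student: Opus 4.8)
\textbf{Proof of Theorem~\ref{weak_conv} (plan).}
The plan is to establish the weak convergence in $D([0,\infty))^{m+1}$ by separately proving convergence of the finite-dimensional distributions and asymptotic tightness. Since weak convergence in $D([0,\infty))$ follows from weak convergence in $D([0,T])^{m+1}$ for every $T>0$, and since $b_n\to\infty$ forces $1-\tau/b_n>1-\varepsilon_1$ for all $\tau\le T$ once $n$ is large, every indicator $\I(N_{b_n,i}^{(\tau),\slb}=j)$ arising for $\tau\in[0,T]$ is eventually measurable with respect to the $\Bc^{\varepsilon_1}_{\cdot:\cdot}$-sigma-fields, so that the mixing coefficients $\alpha_{\varepsilon_1}$ and $\beta_{\varepsilon_2}$ from Condition~\ref{cond} are at our disposal throughout. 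The disjoint-blocks analogue $e_{n,m}^{\djb}$ is exactly the process treated in Theorem~4.1 of \cite{Rob09}, and the overall scheme below mirrors that reference; the genuinely new point is the treatment of the overlap between sliding blocks, for which I would adapt the techniques developed for the sliding block maxima method in \cite{BerBuc18, BucJen20}.

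For the finite-dimensional convergence, fix $0\le\tau_1<\dots<\tau_p$ and $j_1,\dots,j_p\in\{0,\dots,m\}$ and, by the Cram\'er--Wold device, consider $\sum_{l=1}^{p} a_l e_{n,j_l}^{\slb}(\tau_l)=\tfrac{\wk}{n-b_n+1}\sum_{i=1}^{n-b_n+1}\widetilde W_{n,i}$, where $\widetilde W_{n,i}=\sum_l a_l\{\I(N_{b_n,i}^{(\tau_l),\slb}=j_l)-\varphi_{n,j_l}(\tau_l)\}$ is centered and bounded by a fixed constant. I would decompose the index range $\{1,\dots,n-b_n+1\}$ into alternating ``big blocks'' of index-length $a_n=\ell_n b_n$, with an auxiliary sequence $\sqrt{k_n}=o(\ell_n)$ and $\ell_n=o(k_n)$, separated by ``small blocks'' of index-length $2b_n$ (so that the observation windows of different big blocks are separated by at least $b_n$ time points). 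The assumption $\wk\beta_{\varepsilon_2}(b_n)=o(1)$ then allows one to couple the normalized big-block sub-sums with independent copies at total cost of order $(k_n/\ell_n)\beta_{\varepsilon_2}(b_n)=o(k_n^{1/2}/\ell_n)=o(1)$, while the small-block contributions are $\op$ because their combined index-length is $o(n)$. A Lyapunov central limit theorem for the resulting bounded triangular array of independent summands yields asymptotic normality, with limiting variance $\lim_n\Var\big(\sum_l a_l e_{n,j_l}^{\slb}(\tau_l)\big)=\sum_{l,l'}a_l a_{l'}\,\Cov\big(e_{j_l}^{\slb}(\tau_l),e_{j_{l'}}^{\slb}(\tau_{l'})\big)$ once the limiting covariances are identified as in the claim.

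It remains to compute $\Cov(e_j^{\slb}(\tau),e_{j'}^{\slb}(\tau'))$ for $0\le\tau\le\tau'$. Since $\wk/(n-b_n+1)\sim(nb_n)^{-1/2}$ and, by the $\alpha_{\varepsilon_1}$-decay from Condition~\ref{cond}(ii), covariances of summands at lag at least $b_n$ are negligible after summation, stationarity gives
\[
\Cov\big(e_{n,j}^{\slb}(\tau),e_{n,j'}^{\slb}(\tau')\big)=\frac{1}{b_n}\sum_{|h|<b_n}\Cov\Big(\I\big(N_{b_n,1}^{(\tau),\slb}=j\big),\I\big(N_{b_n,1+h}^{(\tau'),\slb}=j'\big)\Big)+o(1).
\]
For $h\ge 0$ write $h=\xi b_n$; then $I_1^{\slb}$ and $I_{1+h}^{\slb}$ decompose into the non-overlap part of $I_1^{\slb}$ (relative size $\xi$), the common overlap (relative size $1-\xi$), and the non-overlap part of $I_{1+h}^{\slb}$ (relative size $\xi$), three consecutive index intervals. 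By the two-level point process convergence of Condition~\ref{cond}(i) together with the independent-increments property of the compound Poisson limit, the joint law of the three associated count vectors converges to the product of $p^{(\xi\tau)}$ (number of $\tau$-level points in the first piece), $p_2^{((1-\xi)\tau',(1-\xi)\tau)}$ (numbers of $\tau'$- and of $\tau$-level points in the overlap), and $p^{(\xi\tau')}$ (number of $\tau'$-level points in the last piece); summing over the compatible configurations gives exactly $H_{j,j'}^{(\tau,\tau')}(\xi)$, so that the summand converges to $\Cov(\I(X_\xi^{(\tau)}=j),\I(Y_\xi^{(\tau')}=j'))$. The map $h\mapsto h/b_n$ turns the $h$-average into a Riemann sum over $[0,1]$, the lags $h<0$ contribute the same quantity by stationarity (the $h=0$ term being $O(1/b_n)$), which produces the prefactor $2\intne(\cdot)\,\di\xi$; the second representation then follows from $\Cov(\I(X_\xi^{(\tau)}=j),\I(Y_\xi^{(\tau')}=j'))=H_{j,j'}^{(\tau,\tau')}(\xi)-p^{(\tau)}(j)p^{(\tau')}(j')$ and from $X_\xi^{(\tau)}\stackrel{d}{=}Y_\xi^{(\tau)}\stackrel{d}{=}N_E^{(\tau)}$, which is checked by marginalizing $H_{j,j'}^{(\tau,\tau')}(\xi)$ and using $p^{(\xi\tau)}\ast p^{((1-\xi)\tau)}=p^{(\tau)}$.

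For asymptotic tightness in $D([0,\infty))^{m+1}$ I would follow the increment estimates of \cite{Rob09}: for $\tau<\tau'$ the increment $\I(N_{b_n,i}^{(\tau'),\slb}=j)-\I(N_{b_n,i}^{(\tau),\slb}=j)$ vanishes unless some $U_s$, $s\in I_i^{\slb}$, lies in $(1-\tau'/b_n,1-\tau/b_n]$, which has probability at most a constant times $\tau'-\tau$ per block; bounding second moments of increments, and fourth-order products of increments over adjacent parameter intervals $[\tau_1,\tau]$ and $[\tau,\tau_2]$, by a constant times $G(\tau_2)-G(\tau_1)$, respectively $(G(\tau_2)-G(\tau_1))^2$, for a fixed continuous nondecreasing $G$ (essentially $G(\tau)=\tau$), with the overlap dependence controlled via the $\alpha_{\varepsilon_1}$- and $\beta_{\varepsilon_2}$-mixing exactly as in the fidi step, yields the standard moment criterion for tightness in $D$; the behavior at $\tau\to\infty$ is controlled because $p^{(\tau)}(j)\to 0$. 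The limit process inherits almost surely continuous sample paths from the fact that the increment-variance bound tends to $0$ as $\tau'\downarrow\tau$ together with Gaussianity. I expect the main obstacle to be precisely the interplay of the overlapping-block dependence with the first two steps: obtaining the CLT despite the slowly decaying within-window dependence (which forces the $\beta$-mixing super-block coupling rather than a direct mixing CLT) and, above all, justifying the passage to the limit in the lag-average that produces the $\xi$-integral, i.e.\ showing that the joint count law on two overlapping blocks converges, uniformly enough in the lag, to the product form dictated by the independent increments of the compound Poisson limit.
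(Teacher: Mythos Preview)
Your overall plan---finite-dimensional convergence plus tightness, with the limiting covariance identified by rewriting the double sum over sliding-block indices as a lag-average and passing to a Riemann integral over the relative overlap $\xi$---is exactly the paper's scheme (Lemmas~\ref{tightness}, \ref{fidis}, \ref{Sliding_Cov}). Your covariance computation is essentially Lemma~\ref{Sliding_Cov}, organized around the lag $h=\xi b_n$ rather than the paper's double integral in $(\xi,z)$, but with the same key step of splitting two overlapping windows into three consecutive sub-intervals and arguing asymptotic independence of the corresponding counts.

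Two substantive differences deserve comment. For the fidis, the paper does \emph{not} use $\beta$-coupling but the $\alpha$-mixing characteristic-function argument: big blocks of index-length $k_n^\ast b_n$ with $k_n^\ast=o(k_n^{1/4})$, where the upper bound on $k_n^\ast$ is dictated solely by Lyapunov. Your $\beta$-coupling route also works in principle, but your stated constraints are inconsistent: Lyapunov forces $\ell_n=o(k_n^{1/4})$ (since $|T_p|\lesssim\sqrt{\ell_n}$ and the Lyapunov ratio is $\sim\ell_n^2/\sqrt{k_n}$), which is incompatible with $\sqrt{k_n}=o(\ell_n)$. In fact your coupling cost is miscounted: each failed super-block shifts the normalized sum by at most a constant times $\ell_n/\sqrt{k_n}$, and there are $\sim k_n/\ell_n$ of them, so the $L^1$ cost is $O(\sqrt{k_n}\,\beta_{\varepsilon_2}(b_n))=o(1)$ \emph{regardless} of $\ell_n$; no lower bound on $\ell_n$ is needed and you may simply take $\ell_n=o(k_n^{1/4})$.

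For tightness, the paper takes a markedly different route from your Billingsley-type fourth-moment criterion. It first applies Berbee's $\beta$-coupling to replace the sliding-block process by one built from i.i.d.\ super-blocks of window length $2b_n$ (this is where $\sqrt{k_n}\,\beta_{\varepsilon_2}(b_n)=o(1)$ is used), and then invokes bracketing-entropy bounds (Theorem~2.14.2 in \cite{VanWel96}) using only the $L^2$-increment estimate $\|h_{n,j}^{(\tau)}-h_{n,j}^{(\tau')}\|_{n,2}\le 2|\tau-\tau'|^{1/2}$. This sidesteps the quadruple sums with overlapping-block dependence that your direct moment approach would require; the coupling-plus-bracketing argument is considerably cleaner for sliding blocks and is the reason the $\beta$-mixing hypothesis enters the theorem at all.
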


\begin{proof}
The result is a consequence of the next two lemmas.
\end{proof}

It is worthwhile to mention that one may add the classical tail empirical process $\bar e_n$ as an $(m+2)$th-coordinate to $E_{n,m}^\slb$ (just as in Theorem 4.2 in \citealp{Rob09}). Additional conditions as in that reference would be necessary then, including a moment bound on the increments of $\tau \mapsto N_n^{\scs (\tau)}$ and adapted mixing conditions. Details are omitted for the sake of brevity.

Further, it is interesting to note that in specific cases the asymptotic variance of the sliding blocks process can be seen to be smaller than that of its disjoint blocks counterpart. For instance, some tedious but straightforward calculations show that, for $\tau =1$,
\[ 
\Var(e_1^{\slb}(\tau)) = 2e^{-2}(2e-5) \approx 0.1182, \quad 
\Var(e_2^{\slb}(\tau)) = e^{-2}(5e-13) \approx 0.0800, \]
which are substantially smaller than
\[ 
\Var(e_1^{\djb}(\tau)) = e^{-1} - e^{-2} \approx 0.2325, \quad 
\Var(e_2^{\djb}(\tau)) = \frac{1}{2e} - \frac{1}{4e^2} \approx 0.1501, \]
where $e_j^{\djb}$ denotes the disjoint blocks limit from Theorem 4.1 in \cite{Rob09}.

\begin{lem}[Tightness.] \label{tightness} Under the conditions of Theorem \ref{weak_conv}, and 
	for any $0 < \phi < \infty$ and $m \in \N_{\ge 0}$, the process $(E_{n,m}^{\slb})_{n \in \N}$ is asymptotically  tight in $D([0,\phi])^{m+1}$.
\end{lem}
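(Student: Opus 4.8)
The plan is to reduce tightness of $(E_{n,m}^{\slb})_n$ to an asymptotic equicontinuity estimate for the centred tail\nobreakdash-count processes, and to establish that estimate through a fourth\nobreakdash-moment bound for increments, a Billingsley\nobreakdash-type maximal inequality, and a monotonicity argument below the $\sqrt{k_n}$\nobreakdash-resolution. \emph{Reductions.} For $m\ge1$ set $\varphi_{n,\ge m}(\tau)=\Pro(N_{b_n}^{(\tau)}\ge m)$ and
\[
e_{n,\ge m}^{\slb}(\tau)=\sqrt{k_n}\Big\{\tfrac1{n-b_n+1}\sum_{i=1}^{n-b_n+1}\I\big(N_{b_n,i}^{(\tau),\slb}\ge m\big)-\varphi_{n,\ge m}(\tau)\Big\},
\]
so that $e_{n,m}^{\slb}=e_{n,\ge m}^{\slb}-e_{n,\ge m+1}^{\slb}$ and $e_{n,\ge0}^{\slb}\equiv0$. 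Since asymptotic tightness in a finite product of Polish spaces is coordinatewise, since asymptotic equicontinuity together with boundedness at a point forces asymptotic tightness in $D([0,\phi])$ with continuous subsequential limits, and since both properties pass to differences, it suffices to prove, for every fixed $m\ge1$ and $\eta>0$,
\[
\lim_{\delta\downarrow0}\ \limsup_{n\to\infty}\ \Pro\Big(\sup_{|\tau-\tau'|\le\delta,\ \tau,\tau'\in[0,\phi]}\big|e_{n,\ge m}^{\slb}(\tau)-e_{n,\ge m}^{\slb}(\tau')\big|>\eta\Big)=0.
\]
Two facts will be used repeatedly: $\tau\mapsto\I(N_{b_n,i}^{(\tau),\slb}\ge m)$ and $\tau\mapsto\varphi_{n,\ge m}(\tau)$ are non-decreasing, with $\varphi_{n,\ge m}$ being $1$-Lipschitz on $[0,\phi]$ uniformly in $n$ (union bound, $U_s\sim\mathrm{Unif}(0,1)$); and for $\tau\le\phi$ and $n$ large, $1-\tau/b_n>1-\eps_1$, so $N_{b_n,i}^{(\tau),\slb}$ is $\Bc_{\cdot:\cdot}^{\eps_1}$-measurable and the coefficients $\alpha_{\eps_1},\beta_{\eps_2}$ govern its serial dependence.

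\emph{The fourth-moment bound.} For $0\le\tau\le\tau'\le\phi$ write the increment as $e_{n,\ge m}^{\slb}(\tau')-e_{n,\ge m}^{\slb}(\tau)=\tfrac{\sqrt{k_n}}{n-b_n+1}\sum_i\bar\chi_{n,i}$ with $\chi_{n,i}=\I(N_{b_n,i}^{(\tau'),\slb}\ge m)-\I(N_{b_n,i}^{(\tau),\slb}\ge m)\in\{0,1\}$, $\bar\chi_{n,i}=\chi_{n,i}-\Delta_n$, $\Delta_n=\varphi_{n,\ge m}(\tau')-\varphi_{n,\ge m}(\tau)$. The central estimate is
\[
\Exp\big[\big(e_{n,\ge m}^{\slb}(\tau')-e_{n,\ge m}^{\slb}(\tau)\big)^4\big]\ \le\ C\,\Delta_n^{2}+R_n(\tau,\tau'),
\]
with a remainder $R_n$ negligible against $\Delta_n^2$ whenever $\Delta_n\ge\rho_n$, for a deterministic $\rho_n\downarrow0$ with $\sqrt{k_n}\rho_n\to0$. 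It is proved by expanding the fourth power into the mixed moments $\Exp[\bar\chi_{n,i_1}\bar\chi_{n,i_2}\bar\chi_{n,i_3}\bar\chi_{n,i_4}]$ and sorting the quadruples $i_1\le\dots\le i_4$ by their gap structure. ``Clustered'' quadruples, with all four blocks $I_{i_a}^{\slb}$ inside an interval of length $O(b_n)$, are bounded using that a block contributes $\chi_{n,i}=1$ only if it contains a ``marked'' point $s$ with $1-\tau'/b_n<U_s\le1-\tau/b_n$; this yields $|\Exp[\bar\chi_{n,i_1}\cdots\bar\chi_{n,i_4}]|\le C\Delta_n$, and with $O(nb_n^3)$ such quadruples a contribution $O(k_n^{-1}\Delta_n)$ after rescaling. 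In a quadruple whose indices split into two groups separated by a gap exceeding $b_n$, the two blocks of $\Bc_{\cdot:\cdot}^{\eps_1}$-fields are $\alpha_{\eps_1}$-close to independence (Lemma~3.9 in \cite{DehPhi02}); if either group is a singleton the factorised term vanishes and only the mixing error survives, bounded by $\min\{C\Delta_n,C\alpha_{\eps_1}(\cdot)\}\le C\sqrt{\Delta_n}\sqrt{\alpha_{\eps_1}(\cdot)}$ and summable since $\alpha_{\eps_1}(h)\le Ch^{-\eta}$ with $\eta>2$ by Condition~\ref{cond}(ii); the ``$2{+}2$'' configurations produce the leading term $3\big(\sum_{|i-i'|\le b_n}|\Exp[\bar\chi_{n,i}\bar\chi_{n,i'}]|\big)^2=O\big((nb_n\Delta_n)^2\big)$, i.e.\ $O(\Delta_n^2)$ after rescaling. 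The hypothesis $\sqrt{k_n}\beta_{\eps_2}(b_n)=o(1)$ may be exploited here via a Berbee-type coupling, replacing the sliding-blocks sum by one over independent ``big blocks'' of sub-blocks, to simplify this accounting; the polynomial $\alpha_{\eps_1}$-decay of Condition~\ref{cond}(ii) is what makes all remainder series converge.

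\emph{Conclusion and main obstacle.} With the moment bound in hand I would run a dyadic chaining down to $\varphi$-resolution $\rho_n$: on the grid formed by the level sets of $\varphi_{n,\ge m}$, consecutive points with $\varphi$-increment $\ge\rho_n$ obey $\Exp[(\text{increment})^4]\le2C(\varphi\text{-increment})^2$, so the Billingsley-type maximal inequality for partial sums bounds, within each cell of a fixed coarse grid of $\varphi$-width $\delta$, the $L^4$-oscillation of $e_{n,\ge m}^{\slb}$ by $K\delta^2+o(1)$; below resolution $\rho_n$ the non-decreasingness of $\tau\mapsto\I(N_{b_n,i}^{(\tau),\slb}\ge m)$ and of $\varphi_{n,\ge m}$ bound the oscillation of $e_{n,\ge m}^{\slb}$ by $2\sqrt{k_n}\rho_n+|\text{endpoint increment}|=o(1)+|\text{endpoint increment}|$, the endpoint increments being controlled by a union bound over the $O(\rho_n^{-1})$ finest intervals against the $O(\rho_n^2)$-bound valid there. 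A union bound over the $O(\delta^{-1})$ coarse cells then gives $\limsup_n\Pro(\sup_{|\tau-\tau'|\le\delta}|e_{n,\ge m}^{\slb}(\tau)-e_{n,\ge m}^{\slb}(\tau')|>\eta)\lesssim\delta\eta^{-4}\to0$, the asserted equicontinuity. The genuinely delicate step is the fourth-moment bound: the overlap of neighbouring sliding blocks forces one to track marked points and to trade the crude bound $C\Delta_n$ on the clustered mixed moments against the mixing (or coupling) errors, while keeping every remainder negligible at the $\sqrt{k_n}$-scale. The reductions, the monotonicity fill-in at sub-resolution, and the use of the maximal inequality are routine by comparison.
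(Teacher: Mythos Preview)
Your approach is plausible and genuinely different from the paper's. The paper does not attempt a direct fourth-moment bound at all: instead, it first uses Berbee's coupling (this is where the assumption $\sqrt{k_n}\beta_{\eps_2}(b_n)=o(1)$ enters, not optionally but essentially) to replace the sliding-block sum by one over i.i.d.\ ``big blocks'' of length $2b_n$, incurring a uniform $o_\Prob(1)$ error. It then writes the coupled process as an i.i.d.\ empirical process indexed by the monotone family $\{h_{n,j}^{(\tau)}:\tau\in[0,\phi]\}$, proves the $L_2$-increment bound $\|h_{n,j}^{(\tau)}-h_{n,j}^{(\tau')}\|_{n,2}\le 2|\tau-\tau'|^{1/2}$, builds explicit $\eps$-brackets from monotonicity (bracketing number $\lesssim\eps^{-2}$), and applies the maximal inequality of Theorem~2.14.2 in \cite{VanWel96}. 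By contrast, you stay in the original dependent world and aim for a fourth-moment bound of Kolmogorov--Billingsley type, then chain down to a sub-$k_n^{-1/2}$ resolution and use monotonicity to fill in. The paper's route buys a clean reduction to standard empirical-process machinery; yours is more elementary (no bracketing entropy, and in principle only $\alpha$-mixing with $\eta>2$ may be needed rather than $\beta$-mixing), but the price is the combinatorial accounting of all gap configurations in $\Exp[\bar\chi_{i_1}\cdots\bar\chi_{i_4}]$ under overlapping blocks, which you correctly identify as the delicate step and only sketch. In particular, configurations such as ``$1+2+1$'' and ``$1+1+2$'' need separate treatment, and the trade-off $\min\{\Delta_n,\alpha_{\eps_1}(\cdot)\}\le\sqrt{\Delta_n\,\alpha_{\eps_1}(\cdot)}$ must be shown to sum to something dominated by $\Delta_n^2$ down to your resolution $\rho_n$; this is doable but requires more care than you indicate.
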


\begin{lem}[Fidis-convergence.] \label{fidis}
	Suppose that Condition~\ref{cond}(i)--(ii) are met. Then, for $m \in N_{\ge0}$ and $\tau_1,\ldots,\tau_r \geq 0, r \in \N_{\ge 1}$,  we have
	\[ \big( E_{n,m}^{\slb}(\tau_1),\ldots, E_{n,m}^{\slb}(\tau_r) \big) \wto \big( E_{m}^{\slb}(\tau_1),\ldots, E_{m}^{\slb}(\tau_r) \big). \] 
\end{lem}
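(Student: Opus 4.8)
The plan is to obtain the finite-dimensional convergence from the Cramér--Wold device, reducing the claim to a univariate central limit theorem, and then to exploit the boundedness of the block functionals together with a standard big-block/small-block decomposition. Fix $r\in\N_{\ge1}$, levels $\tau_1,\dots,\tau_r\ge0$ and coefficients $(c_{k,j})_{1\le k\le r,\,0\le j\le m}$, and set $T_n=\sum_{k=1}^r\sum_{j=0}^m c_{k,j}\,e_{n,j}^{\slb}(\tau_k)$; it suffices to show $T_n\wto\Nor(0,\sigma^2)$ with $\sigma^2=\sum_{k,k'}\sum_{j,j'}c_{k,j}c_{k',j'}\Cov(e_j^{\slb}(\tau_k),e_{j'}^{\slb}(\tau_{k'}))$. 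Writing $N=n-b_n+1$ and noting that $\varphi_{n,j}(\tau)=\Exp[\I(N_{b_n,1}^{(\tau),\slb}=j)]$ by stationarity and continuity of $F$, we have $T_n=c_n\sum_{i=1}^N f_{n,i}$ with $c_n:=\sqrt{k_n}/N$ and $f_{n,i}:=\sum_{k,j}c_{k,j}\{\I(N_{b_n,i}^{(\tau_k),\slb}=j)-\varphi_{n,j}(\tau_k)\}$ a centered random variable bounded by a fixed constant $C$ and measurable with respect to $\sigma(U_s:s\in I_i^{\slb})$.

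The first substantial step is the variance computation $\Var(T_n)\to 2\intne C(\xi)\,\di\xi$, where $C(\xi):=\sum_{k,k'}\sum_{j,j'}c_{k,j}c_{k',j'}\lim_n\Cov\big(\I(N_{b_n,1}^{(\tau_k),\slb}=j),\I(N_{b_n,1+\lfloor\xi b_n\rfloor}^{(\tau_{k'}),\slb}=j')\big)$. Expanding $\Var(T_n)=c_n^2\sum_{i,i'=1}^N\Cov(f_{n,i},f_{n,i'})$ and splitting according to whether $|i-i'|\ge b_n$ or $|i-i'|<b_n$: the first sum involves only covariances of functionals of disjoint blocks and is $O(1/b_n)$ by the covariance inequality for $\alpha$-mixing variables (Lemma~3.9 in \citealp{DehPhi02}) and the summability of $\alpha_{\eps_1}$, while the second equals $c_n^2\big[N\,C_n(0)+2\sum_{h=1}^{b_n-1}(N-h)C_n(h)\big]$ with $C_n(h):=\Cov(f_{n,1},f_{n,1+h})$, $|C_n(h)|\le C^2$, which converges to $2\intne C(\xi)\,\di\xi$ by dominated convergence for Riemann sums (using $c_n^2 N\sim 1/b_n$), once $C_n(\lfloor\xi b_n\rfloor)\to C(\xi)$ is established for each $\xi\in(0,1)$. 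For the latter, note that for $h=\lfloor\xi b_n\rfloor$ the blocks $I_1^{\slb}$ and $I_{1+h}^{\slb}$ overlap in a stretch of relative length $1-\xi$; decomposing $N_{b_n,1}^{(\tau_k),\slb}$ into its exceedance count on the private part of $I_1^{\slb}$ (relative length $\xi$, at level $\tau_k$) plus its count on the overlap, and $N_{b_n,1+h}^{(\tau_{k'}),\slb}$ into its count on the overlap plus its count on the private part of $I_{1+h}^{\slb}$ (at level $\tau_{k'}$), and invoking the weak convergence of the two-level point process of exceedances from Condition~\ref{cond}(i) — after rescaling time by (roughly) the block length — together with the independent-increments property of the compound Poisson limit, these three portions are asymptotically independent with limiting laws $p^{(\xi\tau_k)}$, $p_2^{((1-\xi)(\tau_k\vee\tau_{k'}),(1-\xi)(\tau_k\wedge\tau_{k'}))}$ (jointly at both levels) and $p^{(\xi\tau_{k'})}$; summing over the admissible splittings of $j,j'$ recovers exactly the probability mass function $H_{j,j'}^{(\tau,\tau')}(\xi)$ from the statement (in the order $\tau\le\tau'$, the other order following from the identity $\Cov(\I(N_{b_n,1}^{(\tau_k),\slb}=j),\I(N_{b_n,1+h}^{(\tau_{k'}),\slb}=j'))=\Cov(\I(N_{b_n,1}^{(\tau_{k'}),\slb}=j'),\I(N_{b_n,1+h}^{(\tau_k),\slb}=j))$ valid by stationarity). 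Combined with $\varphi_{n,j}(\tau)\to p^{(\tau)}(j)$ this gives $\Var(T_n)\to\sigma^2$.

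It remains to prove asymptotic normality. Group $\{1,\dots,N\}$ into consecutive index-blocks $\mathcal I_t=\{(t-1)b_n+1,\dots,tb_n\}$, $t=1,\dots,k_n^{\ast}:=\lfloor N/b_n\rfloor$ (the at most $b_n$ remaining indices contributing $o_\Prob(1)$ after normalization by boundedness), and set $\bar R_{n,t}:=b_n^{-1}\sum_{i\in\mathcal I_t}f_{n,i}$, so that $T_n=k_n^{-1/2}\sum_{t=1}^{k_n^{\ast}}\bar R_{n,t}+o_\Prob(1)$ and each $\bar R_{n,t}$ is bounded by $C$ and measurable with respect to $U_s$, $s\in\{(t-1)b_n+1,\dots,(t+1)b_n-1\}$; in particular the covariances of $(\bar R_{n,t})_t$ at lags $\ge 2$ involve only disjoint blocks and are jointly summable. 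This is a normalized partial sum of a uniformly bounded stationary triangular array whose lag-$\ge2$ dependence is governed by $\alpha_{\eps_1}(\ell)\le C\ell^{-\eta}$ with $\eta>2$, and a textbook Bernstein big-block/small-block argument applies: partition $\{1,\dots,k_n^{\ast}\}$ into alternating big blocks of $P_n$ and small blocks of $2$ consecutive indices, with $P_n\to\infty$ and $P_n=o(k_n^{1/2})$ (e.g.\ $P_n=\lfloor k_n^{1/3}\rfloor$). The small blocks absorb the lag-one dependence and contribute $o_\Prob(1)$; consecutive big-block sums are functions of stretches of observations separated by more than $b_n$, so replacing them by mutually independent copies changes the characteristic function by at most $O\big((k_n^{\ast}/P_n)\,\alpha_{\eps_1}(b_n)\big)\lesssim k_n\,\alpha_{\eps_1}(b_n)=o(1)$ by the condition $k_n=o(b_n^\eta)$ of Condition~\ref{cond}(ii); and the Lyapunov condition for the resulting independent sums follows from the crude bound $\Exp|\sum_{t\in\text{big block}}\bar R_{n,t}|^{2+\delta}\le(CP_n)^{\delta}\,\Exp|\sum_{t\in\text{big block}}\bar R_{n,t}|^2\lesssim P_n^{1+\delta}$ together with $P_n^2=o(k_n)$. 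Combining with the variance limit of the previous step yields $T_n\wto\Nor(0,\sigma^2)$, and the asserted vectorial convergence follows from the Cramér--Wold device.

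I expect the main obstacle to be the variance identification in the second step: rigorously justifying the joint weak convergence, on the overlapping and non-overlapping portions of two shifted blocks carrying possibly different thresholds, to independent copies of the one- and two-level compound Poisson limits — including the minor technical nuisance that the union of the two blocks has relative length $1+\xi>1$, so the point-process convergence of Condition~\ref{cond}(i) has to be applied on an interval slightly longer than a single block (equivalently, after a suitable rescaling of the block length and the levels). The blocking arguments in the first and third steps are routine once boundedness and the mixing rate are in hand.
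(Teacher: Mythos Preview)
Your proof is correct and follows essentially the same architecture as the paper: Cram\'er--Wold, a big-block/small-block reduction to a Lyapunov CLT for asymptotically independent block sums, and a separate covariance computation via the overlap decomposition of two shifted sliding blocks (the paper isolates this last step as Lemma~\ref{Sliding_Cov}). Your Lyapunov bound, using $\Exp|B_a|^{2+\delta}\le(CP_n)^\delta\Var(B_a)$ with $\Var(B_a)=O(P_n)$, is slightly sharper than the paper's crude $|T_{np}^+|=O(\sqrt{k_n^\ast})$ bound and therefore allows $P_n=o(k_n^{1/2})$ where the paper needs $k_n^\ast=o(k_n^{1/4})$; this is harmless either way.

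The one point where the two arguments genuinely diverge is the justification of asymptotic independence of the three pieces (private/overlap/private) in the covariance step. You invoke weak convergence of the two-level point process on the union of the two blocks together with the independent-increments property of the compound Poisson limit; the paper instead works at finite $n$, inserting gaps of length $q_n=o(b_n)$ between the three stretches, paying $O(q_n/b_n)$ for the inserted/deleted exceedances and $O(\alpha_{\eps_1}(q_n))$ for the decoupling, and then applies Condition~\ref{cond}(i) separately on each piece (each of relative length at most $1$). Your route is conceptually cleaner but, as you yourself flag, requires applying Condition~\ref{cond}(i) after a rescaling to a block of relative length $1+\xi$; the paper's gap-insertion device sidesteps this and is the more elementary way to make the step fully rigorous under the stated assumptions.
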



\section{Proofs  for Section~\ref{sec:comp}} \label{sec:proofcomp}

\begin{proof}[Proof of Lemma \ref{tightness}]
Since marginal asymptotic tightness implies joint asymptotic tightness, it is sufficient to show asymptotic tightness of $e_{n,j}^{\slb}$ for fixed $j \in \N_{\ge 0}$. Subsequently, we omit the upper index $\slb$. 

 For sufficiently large $n$, the summands making up  $(e_{n,j} (\tau))_{\tau \in [0,\phi]}$ are only depending on $U_s^{\eps_2}=U_s\I(U_s>1-\eps_2)$, whence the beta-mixing coefficients based on the $\mathcal B^{\eps_2}_{\cdot:\cdot}$-sigma  fields become available; in particular, we may use that $\sqrt {k_n} \beta_{\eps_2}(b_n)=o(1)$.

Let $b_n' = 2 b_n$ and $\mathcal{K}_n= (n-b_n+1)/b_n' = O(n/b_n)$. For simplicity we assume that $\kn$ is an integer. For $k\in\{1, \dots, \mathcal K_n\}$, define 
	\begin{align*}
	A_k &= \{2(k-1)b_n'+1, \ldots, 2(k-1)b_n'+b_n'\}, \\
	B_k &= \{(2k-1)b_n'+1,\ldots, (2k-1)b_n'+b_n'\} \ 
	\end{align*}
	such that $|A_k|=|B_k|=l_n$ and $A_1 \cup B_1 \cup \ldots \cup A_{\kn} \cup B_{\kn} = \{ 1,\ldots, n-b_n+1\}$. By the coupling lemma in \cite{Ber79},  
	we can inductively construct an array $\{(\tilde U_s)_{s \in I_i^{\slb}}: i=1,\ldots, n-b_n+1 \}$ such that 
	\begin{align}
\mathrm{(i)} & \quad \forall\ k \in\{1, \dots, \mathcal K_n\}: \big\{ (\tilde U_s)_{s \in I_i^{\slb}}: i \in C_k \big\} \stackrel{D}{=} \big\{ ( U_s)_{s \in I_i^{\slb}}: i \in C_k \big\}, \nonumber \\
\mathrm{(ii)} & \quad \forall\ k \in\{1, \dots, \mathcal K_n\}: \nonumber \\ 
& \hspace{1cm}\Pro \Big( \big\{ (\tilde U_s)_{s \in I_i^{\slb}} : i \in C_k \big\} \neq \big\{ (U_s)_{s \in I_i^{\slb}} : i \in C_k \big\} \Big) \leq \beta_{\varepsilon_2}(b_n),\nonumber       \\
\mathrm{(iii)} & \quad \big\{ (\tilde U_s)_{s \in I_i^{\slb}}:i \in C_k \big\}_{k=1,\ldots,\kn} \ \textrm{ is (row-wise) independent,}\qquad    \label{coupling}
	\end{align}
	where $C_k \in \{A_k,B_k\}$. Next, to simplify the notation, define 
	\[ 
	N_i^{(\tau)} = N_{b_n,i}^{(\tau),\slb} = \sum\nolimits_{s \in I_i^{\slb}} \I \big( U_s > 1-\tau/b_n\big) \] 
and its version based on $(\tilde U_s)_s$ as 
\[ 
	\tilde N_i^{(\tau)} = \sum\nolimits_{s \in I_i^{\slb}} \I \big( \tilde U_s > 1-\tau/b_n\big).
	\] 
Further, let $\tilde e_{n,j}(\tau) = \sqrt{k_n} \{ \tilde p_n^{(\tau)} (j) - \varphi_{n,j}(\tau) \}$ where 
\[
\tilde p_n^{(\tau)}(j) = \frac{1}{n-b_n+1} \sum_{i=1}^{n-b_n+1} \I(\tilde N_i^{(\tau)}=j).
\]
We begin by showing that 
\begin{align} 
\sup_{\tau \in [0, \phi] } | e_{n,j}(\tau)-\tilde e_{n,j}(\tau)  | = \op. \label{sup_tilde} 
\end{align} 
	By Item~(i) in (\ref{coupling}), we have $\Pro(N_i^{(\tau)}=m) = \Pro(\tilde N_i^{(\tau)}=m)$, which implies 
	\begin{align*}
	e_{n,j}(\tau) - \tilde e_{n,j}(\tau)
	&= 
	\frac{\sqrt{k_n}}{n-b_n+1} \sum_{i=1}^{n-b_n+1} \big\{ \I(N_i^{(\tau)}=j) - \I(\tilde N_i^{(\tau)}=j) \big\} \\
	&= 
	\frac{\sqrt{k_n}}{n-b_n+1} \sum_{k=1}^{\kn} \sum_{i \in A_k \cup B_k} \big\{ \I(N_i^{(\tau)}=j) - \I(\tilde N_i^{(\tau)}=j) \big\}.
	\end{align*}
	For fixed $k \in \{1,\ldots,\kn\}$, we obtain 
	\begin{align*}
	& \phantom{{}={}} \Big|  \sum_{i \in A_k} \I(N_i^{(\tau)}=j) - \I(\tilde N_i^{(\tau)}=j) \Big| \\
	& \leq 2b_n \times \I \big( \big\{ N_i^{(\tau)}:i \in A_k \big\} \neq \big\{ \tilde N_i^{(\tau)} : i \in A_k \big\} \big) \\
	& \leq 2b_n  \times \I \big( \big\{ (U_s)_{s \in I_i^{\slb}}:i \in A_k \big\} \neq \big\{ (\tilde U_s)_{s \in I_i^{\slb}} : i \in A_k \big\} \big),
	\end{align*}
	The same holds true if $A_k$ is replaced by $B_k$. Hence, by Item~(ii) in (\ref{coupling}),
	\begin{align*}
	\Exp \Big[ \sup_{\tau \in [0, \phi] } | e_{n,j}(\tau)-\tilde e_{n,j}(\tau)  |  \Big] 
	\leq 
	\frac{\sqrt{k_n} \kn}{n-b_n+1} 4 b_n \beta_{\varepsilon_2}(b_n) = 2 \sqrt{k_n} \beta_{\varepsilon_2}(b_n),
	\end{align*}
	which converges to zero by assumption. Markov's inequality implies (\ref{sup_tilde}).

As a consequence of (\ref{sup_tilde}), it is sufficient to show that the process $(\tilde e_{n,j})_{n \in \N}$ is tight. Write $\tilde e_{n,j}(\tau) = A_{n,j}(\tau) + B_{n,j}(\tau)$, where 
	\begin{align*}
	A_{n,j}(\tau) &= \frac{1}{\sqrt{\kn}} \sum_{k=1}^{\kn} \big\{ \bar A_{n,j,k}(\tau) - \Exp[\bar A_{n,j,k}(\tau)] \big\},
	\end{align*}
	with
	\begin{align*}
	\bar A_{n,j,k}(\tau) &= \frac{\sqrt{k_n \kn}}{n-b_n+1} \sum_{i\in A_k} \I(\tilde N_i^{(\tau)}=m), 
	\end{align*}
	and where $B_{n,j}$ is defined analogously, but with $A_k$ replaced by $B_k$.
	Since finite sums of asymptotically tight processes are asymptotically tight, it is sufficient to show tightness of $A_{n,j}$ and $B_{n,j}$. We only treat $A_{n,j}$. For that purpose, note that $(\tilde U_t : t \in I_i^{\slb})_{i \in A_k}$ only depends on 
	\[ 
	\tilde U_n^{(k)} := 
	\big( \tilde U_{2(k-1)b_n'+1}, \ldots, \tilde U_{2(k-1)b_n'+b_n'+b_n-1} \big) \in \R^{3b_n-1}
	\] 
	 by the definition of $I_i^\slb$. Write 
	 \begin{align*}
	 \bar A_{n,j,k}(\tau) 
	 &= 
	 \frac{\sqrt{k_n \kn}}{n-b_n+1} \sum_{i \in A_k} \I \Big( \sum_{t \in I_i^{\slb}} \I \big( \tilde U_t > 1-\tau/b_n \big) = j \Big)  \\
	 &=
	 h_{n,j}^{(\tau)}(\tilde U_n^{(k)}) - h_{n,j-1}^{(\tau)}(\tilde U_n^{(k)}), 
	 \end{align*}
	where 
	\[ 
	h_{n,j}^{(\tau)} : \R^{3b_n-1} \to \R, \ 
	u \mapsto \frac{\sqrt{k_n \kn}}{n-b_n+1} \sum_{i=1}^{2b_n} \I \Big( \sum_{t \in I_i^{\slb}} \I \big(  u_t > 1-\tau/b_n \big) \leq j \Big), 
	\] 
As a consequence, we may write
	\begin{align*}
	A_{n,j}(\tau) &=  C_{n,j}(\tau) - C_{n,j-1}(\tau). 
	\end{align*}
	where
	\begin{align*}
	C_{n,j}(\tau) = \frac{1}{\sqrt{\kn}} \sum_{k=1}^{\kn} \big\{ h_{n,j}^{(\tau)}(\tilde U_n^{(k)}) - \Exp[h_{n,j}^{(\tau)}(\tilde U_n^{(k)})] \big\}  
	\end{align*}
	for $j\ge 0$ and $C_{n,-1}=0$.
	It is hence sufficient to show asymptotic tightness of $C_{n,j}$ for fixed $j\in \N_{\ge 0}$. Note that by the Item~(i) and (iii) in (\ref{coupling}), $(\tilde U_n^{\scs (k)})_{k=1,\ldots,\kn}$ is a row-wise i.i.d.\ triangular array. Let $P_n$ denote the distribution of $\tilde U_n^{\scs (1)}$ on $\R^{3b_n-1}$. Introducing the  empirical process
	\[
	\mathbb H_n = \frac{1}{\sqrt{\kn}} \sum_{k=1}^{\kn} (\delta_{\tilde U_n^{(k)}} - P_n),
	\]
	we may write $C_{n,j}(\tau) = \mathbb H_{n} h_{n,j}^{(\tau)}$, which may now be investigated by standard methods from empirical process theory \citep{VanWel96}. 
	
	For $\delta>0$, define classes of functions
	\begin{align*}
	\mathcal{F}_n &= \{ h_{n,j}^{(\tau)}: \tau \in [0,\phi] \}, \\
	\mathcal{F}_{n,\delta} &= \{ h_{n,j}^{(\tau)} - h_{n,j}^{(\tau')} : \tau,\tau' \in [0,\phi], |\tau - \tau'| \leq \delta \}
	\end{align*}
	Further, for a real-valued functional $H$  defined on a function class $\mathcal G$, let $\|H\|_{\mathcal{G}} = \sup_{g \in \mathcal{G}} |Hg|$. Clearly,
	\begin{align*}
	& \sup_{\tau, \tau' \in [0,\phi]:  |\tau - \tau'| \leq \delta} |C_{n,j}(\tau)-C_{n,j}(\tau')| = \| \mathbb H_{n}\|_{\mathcal{F}_{n,\delta}},
	\quad 
	\sup_{\tau \in [0,\phi]} |C_{n,j}(\tau)| = \| \mathbb H_{n} \|_{\mathcal{F}_n}.
	\end{align*}  
	Therefore, it suffices to prove  that
	\begin{align}
	\lim_{\delta \downarrow 0} \limsup_{n \to \infty} \Exp \big[ \| \mathbb H_{n} \|_{\mathcal{F}_{n,\delta}} \big] = 0, \quad  \limsup_{n \to \infty} \Exp \big[ \|  \mathbb H_{n}  \|_{\mathcal{F}_n} \big] < \infty. \label{key}
	\end{align}
	We show this by applying Theorem 2.14.2 in \cite{VanWel96}. We begin by constructing an envelope function $E_n$  for $\mathcal{F}_n$ which satisfies $|f| \leq E_n$ for all $f \in \mathcal{F}_n$ and all sufficiently large $n$.
	For that purpose note that, for any $\tau \in [0,\phi]$, 
	\begin{align*}
	\sup_{u \in \R^{3b_n-1}} |h_{n,j}^{(\tau)}(u)| 
	&= 
	\sup_{u \in \R^{3b_n-1}} \Big| \frac{\sqrt{k_n \kn}}{n-b_n+1} \sum_{i=1}^{2b_n} \I\Big( \sum_{t \in I_i^{\slb}} \I \big( u_t > 1-\tau/b_n \big) \leq j \Big) \Big| \\
	& \leq \frac{\sqrt{k_n \kn}}{n-b_n+1} 2b_n = \sqrt{2 \frac{n}{n-b_n+1}} \leq 2
	\end{align*}
	since $n-b_n+1 \geq n/2$ for sufficiently large $n$ by Condition \ref{cond}(ii). Hence, we may choose $E_n=E= 2$. Further note that $2E=4$ is an envelope function for $\mathcal{F}_{n,\delta}$.
	
	Next, let $\|\cdot\|_{n,2}$ be the norm $\| f \|_{n,2} = \Exp[f(\tilde U_n^{\scs (1)})^2]^{1/2}$ 
	 and define, for $\eta >0$, 
	\begin{align}
	\nonumber
	a_n(\eta) &= \eta  \| 2E_n \|_{n,2} / \sqrt{1+ \log N_{[ \, ]} (\eta \| 2E_n\|_{n,2}, \mathcal{F}_{n,\delta}, ||\cdot||_{n,2}) } \\
	 &= 4\eta  / \sqrt{1+ \log N_{[ \, ]} (4 \eta , \mathcal{F}_{n,\delta}, ||\cdot||_{n,2}) }
	 \label{eq:aneta}
	\end{align}
	where $N_{[ \, ]}$ denotes the bracketing number as in Definition 2.1.6 in \cite{VanWel96}. 
	
	Next, we prove the subsequent inequality: for any $\tau,\tau' \in [0,\phi+1]$, 
	\begin{align}
	\| h_{n,j}^{(\tau)}-h_{n,j}^{(\tau')}||_{n,2} \leq2  |\tau-\tau'|^{1/2}. \label{ineq}
	\end{align}
	Indeed, by Jensen's inequality 
	\begin{align}
	\| h_{n,j}^{(\tau)}- h_{n,j}^{(\tau')} \|_{n,2}^2 
	&= 
	\frac{k_n \kn}{(n-b_n+1)^2} \Exp \Big[ \Big( \sum_{i=1}^{2b_n} \{ \I(\tilde N_i^{(\tau)} \leq j ) - \I(\tilde N_i^{(\tau')} \leq j ) \} \Big)^2 \Big] \nonumber \\
	& \leq 
	\frac{k_n \kn(2b_n)^2}{(n-b_n+1)^2} \Exp \big[ \{ \I(\tilde N_1^{(\tau)} \leq j )- \I(\tilde N_1^{(\tau')} \leq j ) \}^2 \big] \nonumber \\
	& \leq 4 
	\Exp \big[ \{ \I(\tilde N_1^{(\tau)} \leq j )- \I(\tilde N_1^{(\tau')} \leq j ) \}^2 \big] \label{expec}
	\end{align}
	for sufficiently large $n$. Without loss of generality, let $\tau \leq \tau'$. Since $z \mapsto \I(\tilde N_1^{(z)} \leq j)$ is monotonically decreasing, one has 
	\begin{align*}
	\{ \I(\tilde N_1^{(\tau)} \leq j) - \I(\tilde N_1^{(\tau')} \leq j)\}^2
	&= 
	\I(N_1^{(\tau)} \leq j) - \I( N_1^{(\tau')} \leq j) \\
	&=
	\I(N_1^{(\tau)} \leq j < N_1^{(\tau')})  \\
	&\le 
	\I  ( N_1^{(\tau')} - N_1^{(\tau)} \ge 1)
	\end{align*}
	Hence, by (i) in (\ref{coupling}), the expression on the right-hand side of (\ref{expec}) can be bounded by
	\begin{align*} 
 4 \cdot \Pro(N_1^{(\tau')} - N_1^{(\tau)} \ge 1) \leq 4 \cdot \Exp[N_1^{(\tau')}-N_1^{(\tau)}] = 4 (\tau'-\tau)
	\end{align*} 
as asserted in \eqref{ineq}.

Next, \eqref{ineq} implies that, for any $\tau, \tau' \in [0,\phi]$ with $|\tau-\tau'| \leq \delta$, we have $\| h_{n,j}^{\scs (\tau)} - h_{n,j}^{\scs (\tau')} ||_{n,2} \leq2  |\tau-\tau'|^{1/2} < \delta^{1/2} \| 2E \|_{n,2}$, such that 
\[ 
\sup_{f \in \mathcal{F}_{n,\delta}}  \| f \|_{n,2} < \delta^{1/2} \|2E\|_{n,2} = 4 \delta^{1/2} . 
\] 
Hence, the condition in Theorem 2.14.2 in \cite{VanWel96} is met and we obtain
	\begin{multline}
	\Exp \big[ \| \mathbb H_{n} \|_{\mathcal{F}_{n,\delta}} \big] 
	 \lesssim 
	4 \int_0^{\delta^{1/2}} \sqrt{1+ \log N_{[ \, ]}(4 \varepsilon, \mathcal{F}_{n,\delta}, ||\cdot ||_{n,2}) } \ \mathrm{d} \varepsilon \\
	 + 4 \sqrt{\kn}  \Prob(4 > \sqrt{\kn } a_n(\delta^{1/2})) \label{exp_f_delta}    	
	\end{multline}
	and by the last part of the theorem
	\begin{align}
	\Exp \big[ ||H_{\mathcal{K}}||_{\mathcal{F}} \big] 
	& \lesssim 
	4 \int_0^{1} \sqrt{1+ \log N_{[ \, ]}(4 \varepsilon, \mathcal{F}_n, ||\cdot ||_{n,2}) } \ \mathrm{d} \varepsilon. \label{exp_f}
	\end{align}
	
	It remains to bound the bracketing numbers $N_{[ \, ]}$ appearing in the previous two displays. For that prurpose, we construct a cover of $\mathcal{F}_n$. For $\varepsilon \in(0,1)$ and $a \in \N_{\ge 1}$ let $D_{\varepsilon,a} = [(a-1) \varepsilon^{2}/4, a \varepsilon^{2}/4]$. Then
	\[ 
	[0,\phi] 
	\subset \bigcup_{a \in \{1,2,\ldots,M_{\varepsilon}\}} D_{\varepsilon,a} 
	\subset [0,\phi+1], 
	\qquad M_{\varepsilon} = \lfloor 4(\phi+1)/{\varepsilon^{2}} \rfloor.
	\] 
	Now, since $\tau \mapsto h_{n,j}^{(\tau)}$ is monotonically decreasing, we may choose, for any $\tau\in[0,\phi]$, an integer $a\in \{1, \dots, M_\eps\}$ such that
\[
h_{n,j}^{(a\eps^{1/2}/4)} \leq h_{n,j}^{(\tau)} \leq h_m^{((a-1)\eps^{2}/4)}.
\]
Moreover, for any $a \in \{1,\ldots,M_{\varepsilon}\}$, we have, by (\ref{ineq}) 
\[ 
\| h_{n,j}^{((a-1)\eps^{2}/4)} - h_{n,j}^{(a\eps^{2}/4)}\|_{n,2} 
\leq 
2 (\varepsilon^{2}/4)^{1/2}=\eps. 
\] 
Hence, the class $\mathcal{F}_n$ is covered by the collection of $\varepsilon$-brackets
	\[ 
	\left\{ \big[ h_m^{(a \varepsilon^{2}/4)}, h_m^{((a-1) \varepsilon^{2}/4)} \big]: a \in \{1,2,\ldots, M_{\varepsilon}\} \right\} 
	\] 
	which implies
	$
	N_{[ \, ]} (\varepsilon, \mathcal{F}_n, \| \cdot \|_{n,2}) 
	\leq 
	M_{\varepsilon} \lesssim \varepsilon^{-2}.  
	$ 
	Moreover, since $\mathcal{F}_{n,\delta} \subset \mathcal{F}_n - \mathcal{F}_n$, we obtain 
	\begin{align*}
	N_{[ \, ]}(\varepsilon,\mathcal{F}_{n,\delta}, \| \cdot \|_{n,2}) 
	&\leq 
	N_{[ \, ]} (\varepsilon, \mathcal{F}_n-\mathcal{F}_n, \|\cdot \|_{n,2})  \\
	&\leq 
	N_{[ \, ]}(\varepsilon/2, \mathcal{F}_n, \| \cdot \|_{n,2})^2 
	\lesssim
	\eps^{-4}.
	\end{align*} 
	
	The bounds on the covering numbers imply that, by (\ref{exp_f_delta}),
	\begin{align*}
	\Exp \big[ \| \mathbb H_{n} \|_{\mathcal{F}_{n,\delta}} \big] 
	& \lesssim 
	4 \int_0^{\delta^{1/2}} \sqrt{1+ \log (\eps^{-4}) } \ \mathrm{d} \varepsilon 
	 + 4 \sqrt{\kn}  \Prob(4 > \sqrt{\kn } a_n(\delta^{1/2})).
	\end{align*}
	For fixed $\delta >0$, we further obtain that $a_n(\delta^{1/2})$ from \eqref{eq:aneta} is bounded away from $0$, uniformly in $n$, such that the second summand in the previous display is eventually $0$ for large $n$, because $\kn \to \infty$ as $ n \to \infty$. Since the integral term is finite, we obtain the first assertion in (\ref{key}). Finally, the second assertion in (\ref{key}) follows from
	\begin{align*}
	\Exp \big[ \| \mathbb H_{n} \|_{\mathcal{F}_n} \big]  \lesssim 4 \int_0^{1} \sqrt{1+ \log (\eps^{-2})}  \ \mathrm{d} \varepsilon  < \infty
	\end{align*}  
	by (\ref{exp_f}).
\end{proof}

\begin{proof}[Proof of Lemma \ref{fidis}]
	By the Cramér-Wold device it suffices to show that 
	\begin{align} \label{eq:dnd}
	& D_n = \sum_{l=1}^r \sum_{j=0}^m \lambda_{l,j} \ e_{n,j}^{\slb}(\tau_l) 
	\wto \sum_{l=1}^r \sum_{j=0}^m \lambda_{l,j} \ e_{j}^{\slb}(\tau_l)  =D
	\end{align}
	for any $\lambda_{l,j} \in \R$. Throughout the proof, let $I_i=I_i^{\slb}$  and write
	\begin{align*}
D_n
	&= \sum_{j=1}^{k_n-1} \sum_{s \in I_j} \sum_{l=1}^r \sum_{j=0}^m 
	\lambda_{l,j} \frac{\sqrt{k_n}}{n-b_n+1} \big\{ \I(N_{b_n,s}^{(\tau_l),\slb}=j) - \varphi_{n,j}(\tau) \big\}  + \op. 
	\end{align*}
Let $k_n\ho{\ast} < k_n$ be an integer sequence with $k_n\ho{\ast} \to \infty$ and $k_n\ho{\ast} = 
	o(k_n^{1/4})$.
	For $q_n\ho{\ast} = \lfloor k_n/(k_n\ho{\ast}+2)\rfloor \to \infty$ and 
	$p=1,\ldots,q_n\ho{\ast}$, define
	\[ 
	J_p\ho{+} = \bigcup_{i=(p-1)(k_n\ho{\ast}+2)+1}\ho{p(k_n\ho{\ast}+2)-2} I_i^\djb , 
	\qquad
	J_p\ho{-} = I^\djb_{p(k_n\ho{\ast}+2)-1} \cup I^\djb_{p(k_n\ho{\ast}+2)}. 
	\]
	Thus, we have decomposed the observation period into $q_n\ho{\ast}$ `big blocks' $J_p^{\scs +}$ of size $k_n\ho{\ast}b_n$, which are separated by `small blocks'  $J_p^{\scs -}$ of size $2b_n$. We may hence rewrite  $D_n = V_n^+ + V_n^- + \op$, where 
	\[ 
	V_n^{\pm} = \frac{1}{\sqrt{q_n^{\ast}}} \sum_{p=1}^{q_n^{\ast}} T_{np}^{\pm} 
	\] 
	and, for $p \in\{1,\ldots,q_n^{\ast}\}$,
	\begin{align*}
	T_{np}^{\pm} &= \sqrt{\frac{q_n^{\ast}}{k_n}} \sum_{s \in J_p^{\pm}} \sum_{l=1}^r \sum_{j=0}^m \lambda_{l,j} \frac{n}{n-b_n+1} \frac{1}{b_n} \big\{ \I(N_{b_n,s}^{(\tau_l), \slb}=j) - \varphi_{n,j}(\tau_l) \big\}. 
	\end{align*}
	
	Let us show that $V_{n}^-=o_\Prob(1)$. For that purpose, 
	take $\varepsilon_1 \in (0,1)$ from Condition \ref{cond}.  Observe that, for sufficiently large $n$, $T_{np}^-$ only depends on $U_s^{\varepsilon_1}=U_s \I(U_s > 1- \varepsilon_1)$ with $s \in \{ (p(k_n^{\ast}+2)-2)b_n+1,\ldots, p(k_n^{\ast}+2)b_n+b_n-1 \}$. 
	Now, since $\Exp[V_n^-] =0$, it is enough to prove $\Var(V_n^-) = o(1)$. By stationarity, 
	\begin{align}
	\Var(\tilde V_n^-) 
	\leq 
	3 \Var(T_{n1}^{-}) + 
	2 \sum_{p=2}^{q_n^{\ast}} \big| \Cov(T_{n1}^{-}, T_{n,p+1}^{-}) \big|. \label{var_vn_minus1}
	\end{align}

Observing that $|J_1^-|=2b_n$ and $n/(n-b_n+1) \le 2$ for sufficiently large $n$, we have
	\begin{align}
	|T_{n1}^{-}|
	& \leq 
	4 \sqrt{\frac{q_n^{\ast}}{k_n}} \sum_{l=1}^r \sum_{j=0}^m |\lambda_{l,j}|
	= O\Big( \sqrt{\frac{q_n^{\ast}}{k_n}} \Big) = O \Big( \frac{1}{\sqrt{k_n^{\ast}}} \Big) =o(1), \label{Tn1_minus1}
	\end{align}
	which implies $\Var(T_{n1}^{-})=o(1)$ as well. 
	Next, by Lemma 3.9 in \cite{DehPhi02}, Condition \ref{cond}(ii) and since $T_{n,p}^{-}$ is bounded, we obtain
	\begin{align*}
	\sum_{p=2}^{q_n^{\ast}} \big| \Cov(T_{n1}^{ -},T_{n,p+1}^{ -}) \big| 
	& \leq 
	 4 \| T_{n1}^{ -} \|_{\infty} \sum_{p=2}^{q_n^{\ast}} \alpha_{\varepsilon_1}(pk_n^{\ast}b_n) \\	
	& \lesssim o(1) \sum_{p=2}^{q_n^{\ast}} (pk_n^{\ast}b_n)^{-\eta}  =o(1), 
	\end{align*}
	such that altogether $\Var(V_n^-) =o(1)$ by (\ref{var_vn_minus1}).

	It remains to show that $V_n^+$ converges in distribution to $D$ from \eqref{eq:dnd}. 
	 Since $T_{np}^{\scs +}$ and $T_{np'}^{\scs +}$ are based on $U_s^{\eps_1}$ observations that  are at least $b_n$ observations apart for $p \neq p'$, and since $q_n^{\ast} \alpha_{\eps_1}(b_n) \leq k_n \alpha_{\eps_1}(b_n) \lesssim k_n b_n^{-\eta}$ by Condition \ref{cond}(ii), a standard argument based on characteristic functions implies that $(T_{np}^{+})_{p=1,\ldots,q_n^{\ast}}$ may be considered independent, which is assumed from now on. As in (\ref{Tn1_minus1}), we obtain that $|T_{np}^+|=O(\sqrt{k_n^{\ast}})$, whence
	\[ 
	\frac{\sum_{p=1}^{q_n^{\ast}}  \Exp\big[ |T_{np}^+|^{3} \big] }{\big\{ \sum_{p=1}^{q_n^{\ast}} \Var(T_{np}^+) \big\}^{3/2}} = O \big( k_n^{-1/2} (k_n^{\ast})^{2} \big) =o(1), 
	\]
	provided that $\lim_{n \to \infty} \Var(T_{n1}^+)$ exists.  In this case, the Ljapunov condition is satisfied and the central limit theorem implies that $V_{n}^+$ converges in distribution  to a centered normal distribution with variance $\lim_{n \to \infty}\Var(T_{n1}^+)$. Note that 
	\[ 
	T_{n1}^+ = \sum_{l=1}^r \sum_{j=0}^m \lambda_{l,j} \ e_{n^*,j}^{\slb}(\tau_l) + R_n,
	\] 
     where $R_n \to 0$ in $L_2(\Prob)$, 
	with $n^{\ast} = k_n^{\ast} b_n$ and that our assumptions in Condition \ref{cond} still hold if $n$ and $k_n$ are substituted by $n^{\ast}$ and $k^{\ast}_n$. The limiting variance  of the above expression is calculated in Lemma \ref{Sliding_Cov} below and is seen to be of the required form. 
\end{proof}

\begin{lem} \label{Sliding_Cov}
	Suppose that Condition~\ref{cond}(i)--(ii) are met. Then, for $0 \leq \tau \leq \tau'$ and $j,j' \in \N_{\ge 0}$, we have
	\begin{align*}
	\lim_{n\to\infty} 
	\Cov (e_{n,j}^{\slb}(\tau), e_{n,j'}^{\slb}(\tau'))
	 &=  2 \intne \Cov \big( \I(X_{\xi}^{(\tau)} = j), \I(Y_{\xi}^{(\tau')} = j') \big) \, \di \xi \\
	&=  2 \intne H_{j,j'}^{(\tau,\tau')}(\xi) \, \di \xi - 2 p^{(\tau)}(j) p^{(\tau')}(j'),
	\end{align*}
	where  $X_{\xi}^{(\tau)} = Y_{\xi}^{(\tau)} = N_E^{(\tau)}$ in distribution with joint probability mass function 
	\begin{align} 
	H_{j,j'}^{(\tau,\tau')}\!(\xi) 
	&= 
	\Pro \big( X_{\xi}^{(\tau)}  =  j, Y_{\xi}^{(\tau')}  =  j' \big)  \nonumber \\ 
	&=  
	\sum_{l=0}^j \sum_{r=j-l}^{j'} p^{(\xi\tau)}(l) p^{(\xi\tau')}(j'-r) p_2^{((1-\xi)\tau',(1-\xi)\tau)}(r,j-l).
	\label{eq:hjj}
	\end{align}
\end{lem}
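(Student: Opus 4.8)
The plan is to compute the limit directly from the definition of $e_{n,j}^{\slb}$, by expanding the covariance into a sum over lags and identifying, for each lag $h$ with $h/b_n\to\xi\in(0,1)$, the limiting covariance of the two (overlapping) block indicators. Write $N_n=n-b_n+1$; using $\Exp[\I(N_{b_n,i}^{(\tau),\slb}=j)]=\varphi_{n,j}(\tau)$ together with stationarity, one has
\[
\Cov\big(e_{n,j}^{\slb}(\tau),e_{n,j'}^{\slb}(\tau')\big)=\frac{k_n}{N_n}\sum_{h=-(N_n-1)}^{N_n-1}\Big(1-\frac{|h|}{N_n}\Big)\gamma_n(h),
\]
where $\gamma_n(h)=\Cov(\I(N_{b_n,1}^{(\tau),\slb}=j),\I(N_{b_n,1+h}^{(\tau'),\slb}=j'))$ for $h\ge0$ and $\gamma_n(h)=\Cov(\I(N_{b_n,1-h}^{(\tau),\slb}=j),\I(N_{b_n,1}^{(\tau'),\slb}=j'))$ for $h<0$; note $|\gamma_n(h)|\le1$. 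Since $k_n/N_n=O(b_n^{-1})$, the lag $h=0$ is negligible, and for $|h|\ge b_n$ the blocks $I_1^{\slb}$ and $I_{1+h}^{\slb}$ are disjoint with a gap of $|h|-b_n+1$: for $n$ large the indicators are $\mathcal B^{\eps_1}_{\cdot:\cdot}$-measurable (as $\tau,\tau'$ are fixed), so Lemma 3.9 in \cite{DehPhi02} gives $|\gamma_n(h)|\lesssim\alpha_{\eps_1}(|h|-b_n+1)$, whence $\frac{k_n}{N_n}\sum_{|h|\ge b_n}|\gamma_n(h)|\lesssim b_n^{-1}\sum_{g\ge1}g^{-\eta}=o(1)$ by Condition~\ref{cond}(ii). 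As $b_n/N_n\to0$ and $k_nb_n/N_n\to1$, it therefore remains to determine the limit of $\tfrac1{b_n}\sum_{h=1}^{b_n-1}\big(\gamma_n(h)+\gamma_n(-h)\big)$.

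The key step is the claim that, for $h=h_n$ with $h_n/b_n\to\xi\in(0,1)$, both $\gamma_n(h_n)$ and $\gamma_n(-h_n)$ converge to $H_{j,j'}^{(\tau,\tau')}(\xi)-p^{(\tau)}(j)p^{(\tau')}(j')$. To prove it I would decompose, after excising two gaps of length $\ell_n$ (the sequence from Condition~\ref{cond}(ii), $\ell_n=o(b_n)$), the union $I_1^{\slb}\cup I_{1+h}^{\slb}$ into the non-overlap part $L_n$ of $I_1^{\slb}$ (size $\sim\xi b_n$), the overlap $I_1^{\slb}\cap I_{1+h}^{\slb}$ shrunk by $\ell_n$ on each side into $O_n$ (size $\sim(1-\xi)b_n$), and the non-overlap part $R_n$ of $I_{1+h}^{\slb}$ (size $\sim\xi b_n$), so that $L_n,O_n,R_n$ are separated by gaps $\ge\ell_n$. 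The number of exceedances of the levels $1-\tau/b_n$ or $1-\tau'/b_n$ falling into the two excised gaps has expectation $O(\ell_n/b_n)=o(1)$, so with probability tending to one $N_{b_n,1}^{(\tau),\slb}=A_n+B_n$ and $N_{b_n,1+h}^{(\tau'),\slb}=C_n+D_n$, where $A_n,B_n$ count exceedances of $1-\tau/b_n$ in $L_n,O_n$ and $C_n,D_n$ count exceedances of $1-\tau'/b_n$ in $O_n,R_n$. By two applications of the $\alpha_{\eps_1}$-mixing inequality (total error $O(\alpha_{\eps_1}(\ell_n))=o(1)$), the triple $\big(A_n,(B_n,C_n),D_n\big)$ is asymptotically independent. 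For the three marginal limits I would invoke the point process convergences of Condition~\ref{cond}(i), specialised to sample size $b_n$ and localised to subintervals of $E=(0,1]$: a block of size $\sim cb_n$ carrying the threshold $1-\sigma/b_n$ contributes, in the limit, $N^{(\sigma)}((0,c])\stackrel{d}{=}N_E^{(c\sigma)}$, and the same block carrying the two thresholds $1-\tau'/b_n\le1-\tau/b_n$ contributes $\bm N^{(\tau',\tau)}((0,c])\stackrel{d}{=}\bm N_E^{(c\tau',c\tau)}$ (Poisson thinning of the limiting compound Poisson processes; the endpoint $c$ carries no atom almost surely). Hence $A_n\wto N_E^{(\xi\tau)}$, $D_n\wto N_E^{(\xi\tau')}$ and $(C_n,B_n)\wto\bm N_E^{((1-\xi)\tau',(1-\xi)\tau)}$, so that
\[
\Pro\big(N_{b_n,1}^{(\tau),\slb}=j,\,N_{b_n,1+h}^{(\tau'),\slb}=j'\big)\ \longrightarrow\ \sum_{l=0}^j\sum_{r=j-l}^{j'}p^{(\xi\tau)}(l)\,p^{(\xi\tau')}(j'-r)\,p_2^{((1-\xi)\tau',(1-\xi)\tau)}(r,j-l)=H_{j,j'}^{(\tau,\tau')}(\xi),
\]
while the two marginals converge to $p^{(\tau)}(j)$ and $p^{(\tau')}(j')$; this yields the claim, and the case $h<0$ is identical after exchanging the roles of $\tau$ and $\tau'$ between the two non-overlap parts.

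To conclude, I would regard $\tfrac1{b_n}\sum_{h=1}^{b_n-1}\big(\gamma_n(h)+\gamma_n(-h)\big)$ as $\int_0^1 f_n(\xi)\,\di\xi$ with $f_n(\xi)=\gamma_n(\lceil\xi b_n\rceil)+\gamma_n(-\lceil\xi b_n\rceil)$. The claim gives $f_n(\xi)\to 2\big(H_{j,j'}^{(\tau,\tau')}(\xi)-p^{(\tau)}(j)p^{(\tau')}(j')\big)$ for every $\xi\in(0,1)$, and $|f_n|\le2$, so dominated convergence yields $\lim_n\Cov(e_{n,j}^{\slb}(\tau),e_{n,j'}^{\slb}(\tau'))=2\intne H_{j,j'}^{(\tau,\tau')}(\xi)\,\di\xi-2p^{(\tau)}(j)p^{(\tau')}(j')$. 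Finally, this equals $2\intne\Cov(\I(X_\xi^{(\tau)}=j),\I(Y_\xi^{(\tau')}=j'))\,\di\xi$ once one checks that the marginals of $H_{j,j'}^{(\tau,\tau')}(\xi)$ are $p^{(\tau)}$ and $p^{(\tau')}$, which follows from the superposition identity $N_E^{(\sigma)}\stackrel{d}{=}N_E^{(\xi\sigma)}+N_E^{((1-\xi)\sigma)}$ with independent summands (Poisson splitting) and its two-level analogue. The step I expect to be the main obstacle is the claim of the previous paragraph, i.e. transferring the point process convergence of Condition~\ref{cond}(i) to the correct effective intensities on the sub-blocks and controlling the mixing and edge errors uniformly enough that the pointwise-in-$\xi$ limits may be integrated; the remaining steps are essentially bookkeeping with the compound Poisson structure.
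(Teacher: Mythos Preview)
Your proposal is correct and follows essentially the same strategy as the paper: restrict to lags where the two sliding blocks overlap (using $\alpha_{\eps_1}$-mixing to kill the rest), decompose the union of the two blocks into left non-overlap / overlap / right non-overlap, create small gaps so that mixing yields approximate independence of the three pieces, identify the limits via Condition~\ref{cond}(i), and integrate over the overlap fraction by dominated convergence.

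The only notable organizational difference is that you work directly with the single lag $h$ and a Riemann sum $\frac1{b_n}\sum_{h=1}^{b_n-1}$, whereas the paper first groups the sliding starting points into disjoint blocks $I_i^{\djb}$ and splits the covariance into a within-block part $T_{n1}$ and an adjacent-block part $T_{n2}$ (plus negligible $T_{n3},T_{n4}$), leading to a double integral over $(\xi,z)\in(0,1)^2$ that subsequently collapses to your single integral. Your parametrisation is a bit more economical; the paper's gap device (intersecting with the event ``no exceedances in a buffer of length $q_n$'' and then discarding it) is equivalent to your excision of $\ell_n$-gaps. Both routes lead to the same limiting expression $2\int_0^1 H_{j,j'}^{(\tau,\tau')}(\xi)\,\di\xi-2p^{(\tau)}(j)p^{(\tau')}(j')$.
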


\begin{proof}[Proof of Lemma \ref{Sliding_Cov}]
	Fix $0 \leq \tau \leq \tau'$ and $j,j'  \in \N_{\ge 0}$. 
	 Note that we may replace $U_s$ by $U_s^{\eps_1}=U_s\I(U_s>1-{\eps_1})$ for $n$ large enough, where $\eps=\varepsilon_1$ is from Condition \ref{cond}(ii).	
	Write 
	\begin{align*}
	r_n(\tau, \tau') &\equiv 
	\Cov (e_{n,j}^{\slb}(\tau), e_{n,j'}^{\slb}(\tau')) \\
	&= 
	\frac{k_n}{(n-b_n+1)^2} \sum_{s,t=1}^{n-b_n+1}\Cov (\I(N_{b_n,s}^{(\tau),\slb}=j), \I(N_{b_n,t}^{(\tau'),\slb}=j') ) \\
	&= 
	\frac{k_n}{(n-b_n+1)^2} \sum_{i,i'=1}^{k_n-1} \sum_{s \in I_i} \sum_{t \in I_{i'}} \Cov(A_s,B_t) +o(1),
	\end{align*}
	where $A_s = \I(N_{b_n,s}^{(\tau),\slb}=j), B_t=\I(N_{b_n,t}^{(\tau'),\slb}=j')$ and $I_i = I_i^\djb$. By stationarity, we may further write
	\begin{align}
	r_n(\tau, \tau') 
	&=  \nonumber
	\frac{k_n (k_n-1)}{(n-b_n+1)^2} \Cov \Big( \sum_{s \in I_1} A_s, \sum_{t \in I_1} B_t \Big)  \\
	& \hspace{.7cm}  \nonumber
	+ \frac{k_n}{(n-b_n+1)^2} \sum_{i=2}^{k_n-1} (k_n-i) \bigg\{ \Cov \Big( \sum_{s \in I_1} A_s, \sum_{t \in I_i} B_t \Big)  \\
		& \hspace{5.2cm} \nonumber
		+ \Cov \Big( \sum_{s \in I_q} A_s, \sum_{t \in I_1} B_t \Big) \bigg\} +o(1)  \\
	&=   \label{eq:tnd}
	T_{n1} + T_{n2} + T_{n3} + T_{n4} +o(1),
	\end{align}
	where
	\begin{align*}
	T_{n1} 
	&=  
	\frac{k_n(k_n-1)}{(n-b_n+1)^2} \Cov \Big( \sum_{s \in I_1} A_s, \sum_{t \in I_1} B_t \Big)  \\
	T_{n2} 
	&= 
	\frac{k_n (k_n-2)}{(n-b_n+1)^2} \bigg\{ \Cov \Big( \sum_{s \in I_1} A_s, \sum_{t \in I_2} B_t \Big) + \Cov \Big( \sum_{s \in I_2} A_s, \sum_{t \in I_1} B_t \Big) \bigg\}  \\
	T_{n3} 
	&=
	 \frac{k_n (k_n-3)}{(n-b_n+1)^2} \bigg\{ \Cov \Big( \sum_{s \in I_1} A_s, \sum_{t \in I_3} B_t \Big) + \Cov \Big( \sum_{s \in I_3} A_s, \sum_{t \in I_1} B_t \Big) \bigg\} \\
	T_{n4} 
	&=
	 \frac{k_n}{(n-b_n+1)^2} \sum_{i=4}^{k_n-1} (k_n-i) \bigg\{ \Cov \Big( \sum_{s \in I_1} A_s, \sum_{t \in I_i} B_t \Big) \\
	& \hspace{6.8cm} 
	+ \Cov \Big( \sum_{s \in I_i} A_s, \sum_{t \in I_1} B_t \Big) \bigg\}.
	\end{align*}
	Next, we show that 
	\begin{align} \label{eq:tn34}
	T_{n3} =o(1), \quad T_{n4} = o(1).
	\end{align}
	For that purpose note that $\sum_{s \in I_1} A_s$ and $\sum_{t \in I_i} B_s$ are at least $(i-3)b_n$ observations apart. 
	By Lemma 3.9 in \cite{DehPhi02} we obtain
	\begin{align*}
	\big| \Cov \Big( \sum_{s \in I_1} A_s, \sum_{t \in I_i} B_t \Big) \big| 
	& \leq 4 \ b_n^2 \alpha_{\eps_1}((i-3)b_n), 
	\end{align*}
	such that 
	\begin{align*}
	|T_{n4}| 
	& \leq 
	\frac{8 \ k_n^2 b_n^2}{(n-b_n+1)^2} \sum_{i=4}^{k_n-1} \alpha_{\eps_1}((i-3)b_n) 
	\lesssim 
	\frac{n^2  b_n^{-\eta}}{(n-b_n+1)^2} \sum_{i=1 }^{k_n-4} i^{-\eta} =o(1)
	\end{align*}
	since $\eta>1$ by Condition \ref{cond}(ii). 
	Regarding $T_{n3}$, note that 
	\begin{align*}
	\big| \Cov \Big( \sum_{s \in I_1} A_s, \sum_{t \in I_3} B_t \Big) \big| 
	& \leq 
	\sum_{t=2b_n+1}^{3b_n} \big| \Cov \Big( \sum_{s=1}^{b_n} A_s,B_t \Big) \big| \\
	& \leq 
	4 \, b_n \sum_{t=2b_n+1}^{3b_n} \alpha_{\eps_1}(t-2b_n) = 4 \, b_n \sum_{t=1}^{b_n} \alpha_{\eps_1}(t)
	\end{align*}
	by Lemma 3.9 in \cite{DehPhi02}, which implies 
	\begin{align*}
	|T_{n3}| \leq 8 \frac{k_n(k_n-3)b_n}{(n-b_n+1)^2} \sum_{t=1}^{b_n} \alpha_{\eps_1}(t) \lesssim \frac{k_n^2 b_n}{(n-b_n+1)^2} \sum_{t=1}^{b_n} t^{-\eta} = O(b_n^{-1}).
	\end{align*}
	Hence, \eqref{eq:tn34} is shown.
	
	Next, consider $T_{n1}$.  Since $k_n (k_n-1)/(n-b_n+1)^2 = 1/b_n^2 + o(1)$ and $\Exp[A_s]  \to p^{(\tau)}(j)$ and $\Exp[B_t] \to p^{(\tau')}(j')$, we may write
	\begin{align*}
T_{n1} = \frac{1}{b_n^2} \sum_{s,t=1}^{b_n} \! \Exp[A_s B_t] - p^{(\tau)}(j) p^{(\tau)}(j') + o(1),
	\end{align*}
	Next, we have $b_n^{-2} \sum_{s,t=1}^{b_n} \Exp[A_s B_t] = \intne f_n(\xi) \ \mathrm{d}\xi$, where, for $\xi \in (0,1)$, 
	\begin{align*}
	f_n(\xi) 
	&= 
	\frac{1}{b_n} \sum_{s,t=1}^{b_n} \Exp[A_s B_t] \I \Big( \xi \in \Big[ \frac{t-1}{b_n},\frac{t}{b_n} \Big) \Big) \\
	&=
	 \frac{1}{b_n} \sum_{s=1}^{b_n} \Exp[A_s B_{\lfloor b_n \xi \rfloor +1}] = \intne \varphi_n(\xi,z) \ \mathrm{d}z,
	\end{align*}
where, for $z \in (0,1)$,
	\begin{align}
 \varphi_n(\xi,z) 
 &= \label{eq:defphi}
 \sum_{s=1}^{b_n} \Exp[A_s B_{\lfloor b_n \xi \rfloor +1}] \I \Big( z \in \Big[ \frac{s-1}{b_n},\frac{s}{b_n} \Big) \Big)   \\
 &= \nonumber
 \Exp \big[ A_{\lfloor b_n z \rfloor +1} B_{\lfloor b_n \xi \rfloor +1} \big] 
 = 
  \Pro \big( N_{b_n,\lfloor b_n z \rfloor +1}^{(\tau),\slb} = j, N_{b_n,\lfloor b_n \xi \rfloor +1}^{(\tau').\slb} = j' \big) 
	\end{align}
	For $0 < z \leq \xi < 1$, we may rewrite	
	\begin{align}
	 \varphi_n(\xi,z)  
	 &=
	 \sum_{l=0}^{j} \sum_{r=0}^{j'}  \Pro \Big( 
	 N_{\lfloor b_n z \rfloor + 1 : \lfloor b_n \xi \rfloor}(\tau) =l, 
	 N_{\lfloor b_n \xi \rfloor + 1 : \lfloor b_n z \rfloor + b_n}(\tau)= j-l, \nonumber \\ 
	 & N_{\lfloor b_n \xi \rfloor + 1 : \lfloor b_n z \rfloor + b_n}(\tau')= r,
	 N_{\lfloor b_n z \rfloor + b_n + 1 : \lfloor b_n \xi \rfloor + b_n}(\tau')= j'-r\Big).  \label{phi_n}
	\end{align}
	where, for $s,s' \in \N_{\ge 1}$ with $s \le s'$ and $\tau\ge 0$,
	\[
	N_{s:s'}(\tau) = \sum_{t=s}^{s'} \I \Big(U_t>1-\frac{\tau}{b_n}\Big).
	\]
	We will next argue that the first, the intersection of the second and the third and the fourth  of the four events in each summand in \eqref{phi_n} may be considered independent.
	Indeed, for any fixed $y>0$ and any integer sequence $q_n$ converging to infinity with $q_n =o(b_n)$, we have 
	\begin{align*}
	\Pro \big(N_{1:q_n}(y) = 0 \big) \geq 1-q_n \Pro \big( U_1 > 1-\frac{y}{b_n} \big) = 1-\frac{yq_n}{b_n} \to 1
	\end{align*}
	As a consequence, we may intersect the events inside the sum in \eqref{phi_n} with
	\begin{align} \label{eq:nnn}
	\{N_{\lfloor b_n \xi \rfloor-q_n:\lfloor b_n \xi \rfloor}(\tau) =0, N_{\lfloor b_n z \rfloor + b_n + 1 : \lfloor b_n z \rfloor + b_n + 1 + q_n}(\tau')= 0\}.
	\end{align}
	at the expense of a $O(q_n/b_n)$-term. On the intersected event, we must then have $N_{\lfloor b_n z \rfloor +1:\lfloor b_n \xi \rfloor-q_n}(\tau)=l$ and  $N_{\lfloor b_n z \rfloor + b_n + q_n : \lfloor b_n \xi \rfloor + b_n}(\tau)= j'-r$. After discarding the events in \eqref{eq:nnn} again, we are left with an intersection of three events that are based on  observations that are at least $q_n$ observations apart. As a consequence, at the expense of an $\alpha_{\eps_1}(q_n)$-error, they may be considered independent. Finally, we may sneak in the omitted observations once again at the expense of an additional $O(q_n/b_n)$-term, and we arrive at
	\begin{align}
	\varphi_n(\xi, z) & = 
	\sum_{l=0}^{j} \sum_{r=0}^{j'} \
	\Pro \Big( N_{\lfloor b_n z \rfloor + 1 : \lfloor b_n \xi \rfloor}(\tau) =l\Big) \nonumber  \\
	 &\hspace{1cm} \times \Pro\Big( N_{\lfloor b_n \xi \rfloor + 1 : \lfloor b_n z \rfloor + b_n}(\tau)= j-l,
	 N_{\lfloor b_n \xi \rfloor + 1 : \lfloor b_n z \rfloor + b_n}(\tau')= r\Big) \nonumber \\
  &\hspace{1cm} \times \Pro\Big( N_{\lfloor b_n z \rfloor + b_n + 1 : \lfloor b_n \xi \rfloor + b_n}(\tau')= j'-r\Big)\nonumber   \\  
  &\hspace{.5cm}  + O(\alpha_{\eps_1}(q_n)) + O(q_n/b_n) \label{manipulation}
	\end{align}
 which converges to
 	\begin{multline*}
	H(\xi-z)  	
	=
	H_{j,j'}^{(\tau, \tau')}(\xi-z)\\
	= \sum_{l=0}^j \sum_{r=j-l}^{j'} 
		p^{((\xi-z)\tau)}(l)  p^{((\xi-z)\tau')}(j'-r) 
		p_2^{((1-\xi+z)\tau',(1-\xi+z)\tau)}(r,j-l)
	\end{multline*}
	by Condition \ref{cond}(i), where $H_{j,j'}^{(\tau, \tau')}$ is defined in \eqref{eq:hjj}.
	Changing the roles of $z$ and $\xi$, we obtain 
	\[
	\varphi_n(\xi,z) \to H(\xi-z)\I(z \leq \xi) + H(z-\xi) \I(z > \xi). 
	\]
	For fixed $\xi \in (0,1)$, $\sup_{n \in \N} ||\varphi_n(\xi,\cdot)||_{\infty} \leq 1$, such that the dominated convergence theorem implies 
	\[ 
	f_n(\xi) = \intne \varphi_n(\xi,z) \ \mathrm{d}z \to \int_0^{\xi} H(\xi-z) \ \mathrm{d}z + \int_{\xi}^1 H(z-\xi) \ \mathrm{d}z. 
	\] 
	Moreover, since $||f_n||_{\infty}  \leq 1$, dominated convergence also implies that
	\begin{align}
	\lim_{n\to\infty} T_{n1} 
	&= \intne \int_0^{\xi} H(\xi-z) \, \mathrm{d}z + \int_{\xi}^{1} H(z-\xi) \, \mathrm{d}z \, \mathrm{d}\xi - p^{(\tau)}(j) p^{(\tau')}(j') \nonumber \\
	& = 2 \intne \int_0^{\xi} H(\xi-z) \, \mathrm{d}z \, \mathrm{d}\xi - p^{(\tau)}(j) p^{(\tau')}(j') \nonumber \\ 
	& = 2 \int_{0}^{1} (1-\xi) H(\xi) \, \di \xi - p^{(\tau)}(j) p^{(\tau')}(j'), \label{eq:tn1}
	\end{align}
	where the last step is due to Fubini's theorem.

	It remains to treat $T_{n2}$ in \eqref{eq:tnd}, which consists of two summands, say $T_{n2,1}$ and $T_{n2,2}$. By similar arguments as for $T_{n1}$, the first summand  $T_{n2,1}$ can be written as
	\begin{align*}
T_{n2,1}
	&=  \frac{1}{b_n^2} \sum_{s=1}^{b_n} \sum_{t=b_n+1}^{2b_n} \Exp[A_s B_t] - p^{(\tau)}(j) p^{(\tau')}(j') + o(1) \nonumber \\
	&=  \int_0^1 \int_0^1 \psi_n(\xi,z) \, \di z  \, \di \xi - p^{(\tau)}(j) p^{(\tau')}(j') +o(1) 
	\end{align*}
	where
	\begin{align*}
	\psi_n(\xi,z) 
	&=  
	\Exp \big[ A_{\lfloor b_n \xi \rfloor +1} B_{\lfloor (z+1)b_n \rfloor +1} \big] \\
	&= \nonumber
	\Pro \Big(N_{\lfloor b_n \xi \rfloor +1: \lfloor b_n \xi \rfloor +b_n}(\tau)=j,
	N_{\lfloor b_n (z+1) \rfloor +1: \lfloor  b_n (z+1) \rfloor +b_n}(\tau')=j'\Big).
	\end{align*}
	If $\xi \leq z$, then $\lfloor b_n \xi \rfloor +b_n \leq \lfloor b_n (1+z) \rfloor +1$ and we can manipulate the above probability as in (\ref{manipulation}), such that it equals
	\begin{align*} 
		\psi_n(\xi,z)  &= \Pro \Big(N_{\lfloor b_n \xi \rfloor +1: \lfloor b_n \xi \rfloor +b_n}(\tau)=j \Big) \Pro \Big( N_{\lfloor b_n (z+1) \rfloor +1: \lfloor  b_n (z+1) \rfloor +b_n}(\tau')=j' \Big) \\
	&\qquad + O(\alpha_{\eps_1}(q_n)) + O(q_n/b_n),
	\end{align*}
	which converges to $p^{(\tau)}(j)p^{(\tau')}(j')$. In the case $z \leq \xi$, we again need to separate the sums as in (\ref{manipulation}) and obtain that $\psi_n(\xi,z)$ equals 
		\begin{align*}
	&\sum_{l=0}^{j} \sum_{r=0}^{j'} \
	\Pro \Big( N_{\lfloor b_n \xi \rfloor + 1 : \lfloor b_n (z+1) \rfloor}(\tau) =l\Big) \nonumber  \\
	 &\hspace{1cm} \times 
	 \Pro\Big( N_{ \lfloor b_n (z+1) \rfloor + 1: \lfloor b_n \xi \rfloor + b_n}(\tau)= j-l,
	 N_{ \lfloor b_n (z+1) \rfloor + 1: \lfloor b_n \xi \rfloor + b_n}(\tau')= r\Big) \nonumber \\
  &\hspace{1cm} \times 
  \Pro\Big( N_{ \lfloor b_n \xi \rfloor + b_n + 1: \lfloor b_n (z+1) \rfloor + b_n}(\tau')= j'-r\Big)\nonumber   \\  
  &\hspace{.5cm}  + O(\alpha_{\eps_1}(q_n)) + O(q_n/b_n) 
	\end{align*}
	which converges to
	\begin{multline*}
		H(1-(\xi-z)) = H_{j,j'}^{(\tau, \tau')}(1-(\xi-z))  \\
		= 
		\sum_{l=0}^j \sum_{r=j-l}^{j'} p^{((1-\xi+z)\tau)}(l)   
		p^{((1-\xi+z)\tau')}(j'-r)
		p_2^{((\xi-z)\tau', (\xi-z)\tau)}(r,j-l).
	\end{multline*}
Since $\|\psi_n\|_{\infty} \leq 1$, dominated convergence implies
	\begin{align*}
\lim_{n\to\infty} T_{n2,1} &=
\intne \int_0^{\xi} H(1-(\xi-z)) \, \mathrm{d}z  + \int_\xi^1 p^{(\tau)}(j) p^{(\tau')}(j') \, \di  z \, \di \xi \\ 
& \hspace{4cm}
- p^{(\tau)}(j) p^{(\tau')}(j') \\
&= \intne 
\xi H(\xi) \, \mathrm{d}\xi - \frac12 p^{(\tau)}(j) p^{(\tau')}(j')
	\end{align*}
	as $n \to \infty$. 
	By symmetry, the second summand in $T_{n2}$ has the same limit,
	such that  
	\begin{align} 
	\lim_{n\to\infty} T_{n2}  
	& = 2 \intne \xi H(\xi) \, \mathrm{d}\xi   - p^{(\tau)}(j) p^{(\tau')}(j'),
	\label{eq:tn2}
	\end{align}
	where the last equation follows as in (\ref{eq:tn1}).
	Altogether, by \eqref{eq:tn34}, \eqref{eq:tn1} and \eqref{eq:tn2}, we have
	\begin{align*}
	\lim_{n\to\infty} \Cov \big( e_{n,j}^{\slb}(\tau), e_{n,j'}^{\slb}(\tau') \big)
	& = 2 \intne H_{j,j'}^{(\tau, \tau')}(\xi) \, \di \xi - 2 p^{(\tau)}(j) p^{(\tau')}(j')
	\end{align*}
	as asserted.
\end{proof}

\appendix
\renewcommand{\thesection}{\Alph{section}}


\section{Auxiliary lemmas - Disjoint blocks}

Throughout, assume that Condition \ref{cond} is met. All convergences are for $n \to \infty$ if not stated otherwise.

\begin{lem} \label{dj_ejn_integral}
	For any $j \in \N_{\ge 1}$, 
	\[ 
	\intnu e_{n,j}^{\djb}(\tau) \ \mathrm{d}(\hat H_{n}^{\djb}-H)(\tau) = \op. 
	\]
\end{lem}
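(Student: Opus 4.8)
\emph{Strategy.} I would localise the integral: for fixed $\phi>0$,
\[
\intnu e_{n,j}^{\djb}(\tau)\,\di(\hat H_n^{\djb}-H)(\tau)
=
\int_0^\phi e_{n,j}^{\djb}\,\di(\hat H_n^{\djb}-H)
+
\int_\phi^\infty e_{n,j}^{\djb}\,\di\hat H_n^{\djb}
-
\int_\phi^\infty e_{n,j}^{\djb}\,\di H ,
\]
and show that the first term is $\op$ for each fixed $\phi$, while the last two are negligible uniformly in $n$ as $\phi\to\infty$; since the left-hand side does not depend on $\phi$, letting $n\to\infty$ and then $\phi\to\infty$ yields the claim. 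A preliminary ingredient is the Glivenko--Cantelli statement $\sup_{x\ge0}|\hat H_n^{\djb}(x)-H(x)|=\op$: by stationarity $\Exp[\hat H_n^{\djb}(x)]=\Pro(Z_{n1}^{\djb}\le x)\to H(x)$ as in \eqref{z_conv}, and since $\{Z_{ni}^{\djb}\le x\}\in\Bc^{\eps_1}_{\cdot:\cdot}$ for $x\le\eps_1 b_n$, Lemma~3.9 in \cite{DehPhi02} combined with the summability of $\alpha_{\eps_1}$ from Condition~\ref{cond}(ii) gives $\Var(\hat H_n^{\djb}(x))=O(k_n^{-1})$; pointwise convergence in probability then upgrades to uniform convergence by monotonicity of $\hat H_n^{\djb}$ and continuity of $H$ (Pólya).

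\emph{The compact part.} By Theorem~4.1 in \cite{Rob09}, $e_{n,j}^{\djb}\wto e_j^{\djb}$ in $D([0,\phi])$ with $e_j^{\djb}$ a centred Gaussian process with continuous paths; since additionally $\hat H_n^{\djb}\pto H$ uniformly (a constant limit), the pair converges jointly. Passing to a Skorokhod representation, one may assume $e_{n,j}^{\djb}\to e_j^{\djb}$ uniformly on $[0,\phi]$ (uniform, because the limit is continuous) and $\hat H_n^{\djb}\to H$ uniformly. Splitting $e_{n,j}^{\djb}=(e_{n,j}^{\djb}-e_j^{\djb})+e_j^{\djb}$ inside the first integral, the contribution of $e_{n,j}^{\djb}-e_j^{\djb}$ is at most $2\,\|e_{n,j}^{\djb}-e_j^{\djb}\|_{\infty,[0,\phi]}\to0$ (the total variation of $\hat H_n^{\djb}-H$ on $[0,\phi]$ being $\le2$), while $\int_0^\phi e_j^{\djb}\,\di(\hat H_n^{\djb}-H)\to0$ because $e_j^{\djb}$ is bounded and continuous and $\hat H_n^{\djb}\to H$ uniformly with $H$ continuous. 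Hence $\int_0^\phi e_{n,j}^{\djb}\,\di(\hat H_n^{\djb}-H)=\op$.

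\emph{The tail.} For the $H$-tail, $\Exp[e_{n,j}^{\djb}(\tau)]=0$ and Jensen give $\Exp\int_\phi^\infty|e_{n,j}^{\djb}(\tau)|\,\di H(\tau)\le\int_\phi^\infty\Var(e_{n,j}^{\djb}(\tau))^{1/2}\,\di H(\tau)$; since the summands defining $e_{n,j}^{\djb}(\tau)$ are bounded by $1$, Lemma~3.9 in \cite{DehPhi02} with $\alpha_{\eps_1}$ (available for $\tau\le\eps_1 b_n$, while the range $\tau>\eps_1 b_n$ contributes at most $\sqrt{k_n}\,\Pro(Z_{n1}^{\djb}>\eps_1 b_n)=o(1)$, since $k_n=o(b_n^\eta)$ forces $\log k_n=o(b_n)$) gives $\sup_n\sup_\tau\Var(e_{n,j}^{\djb}(\tau))<\infty$, so the bound is $\lesssim(1-H(\phi))^{1/2}=e^{-\theta\phi/2}$. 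For the $\hat H_n^{\djb}$-tail, $\int_\phi^\infty e_{n,j}^{\djb}\,\di\hat H_n^{\djb}=k_n^{-1}\sum_{i}e_{n,j}^{\djb}(Z_{ni}^{\djb})\I(Z_{ni}^{\djb}>\phi)$; taking the expectation of the absolute value and using stationarity reduces matters to $\Exp[|e_{n,j}^{\djb}(Z_{n1}^{\djb})|\I(Z_{n1}^{\djb}>\phi)]$, and after removing the $O(k_n^{-1/2})$ contribution of block $1$ itself the remaining sum is a functional of blocks $2,\dots,k_n$ that is only weakly dependent on $Z_{n1}^{\djb}$; conditioning on $Z_{n1}^{\djb}$ and reusing the uniform variance bound then bounds this by $C\,\Pro(Z_{n1}^{\djb}>\phi)\to Ce^{-\theta\phi}$.

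\emph{Main obstacle.} The genuinely delicate point is the tail estimate against the \emph{random} measure $\di\hat H_n^{\djb}$: one must make the near-independence of $Z_{n1}^{\djb}$ and $e_{n,j}^{\djb}(\cdot)$ quantitative when the latter is evaluated at the random argument $Z_{n1}^{\djb}$, keep track of the $\Bc^{\eps_1}_{\cdot:\cdot}$-measurability restriction (the event $\{U_s>1-\tau/b_n\}$ is only $\Bc^{\eps_1}$-measurable for $\tau\le\eps_1 b_n$), and obtain bounds uniform in $\tau$. The compact part, by contrast, is a routine continuous-mapping argument once Robert's functional limit theorem and the Glivenko--Cantelli property of $\hat H_n^{\djb}$ are available.
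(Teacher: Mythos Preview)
Your overall decomposition into a compact piece and two tails matches the paper's, and your treatment of the compact integral (via Robert's functional CLT, the Glivenko--Cantelli property of $\hat H_n^{\djb}$, and a continuous-mapping argument) as well as the $H$-tail (via a uniform variance bound from Lemma~3.9 in \cite{DehPhi02}) are essentially correct and close to what the paper does.

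The genuine gap is precisely where you flag it, in the $\hat H_n^{\djb}$-tail. Your sentence ``conditioning on $Z_{n1}^{\djb}$ and reusing the uniform variance bound then bounds this by $C\,\Pro(Z_{n1}^{\djb}>\phi)$'' does not go through: after conditioning on $Z_{n1}^{\djb}=\tau$, the summands $\I(N_{b_n,i'}^{(\tau)}=j)-\varphi_{n,j}(\tau)$ for $i'\ge 2$ are \emph{not} conditionally centred (their conditional mean differs from $\varphi_{n,j}(\tau)$ by an $\alpha$-mixing error that, summed over $\sim k_n$ terms and scaled by $k_n^{-1/2}$, need not vanish), and the unconditional variance bound says nothing about the conditional second moment. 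The paper resolves this with Bradley's coupling lemma (Lemma~\ref{Bradley}), applied twice. First, after restricting to $\{Z_{ni}<\eps\kappa b_n\}$ via Condition~\ref{cond}(iii) to secure $\Bc^{\eps_1}$-measurability, it replaces $Z_{ni}^{\eps\kappa}$ by an \emph{independent} copy $Z_{ni}^{*\eps\kappa}$; the resulting perturbation of the evaluation point is controlled through a monotonicity estimate for the partial-sum process $\tau\mapsto h_{n,i,p}(\tau)$ (equation~\eqref{eq:hni2}), which converts a small shift in $\tau$ into a deterministic $2a\sqrt{k_n}$ error. Second, with $Z_{ni}^{*\eps\kappa}$ now independent of blocks $1,\dots,i-2$, one may legitimately condition on it, and the required uniform bound $\sup_x\Exp[|h_{n,i-2,p}(x)|]=O(1)$ is obtained by a further coupling that replaces the mixing summands by i.i.d.\ copies. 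Both the coupling device and the monotonicity inequality are the missing ideas; without them the step you describe cannot be justified.
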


\begin{proof}[Proof of Lemma \ref{dj_ejn_integral}]
Throughout the proof, we omit the upper index $\djb$ at all instances of $\hat H_n^{\djb}, e_{n,j}^\djb$ and $Z_{ni}^\djb$. For any $\delta >0$ and $\ell\in\N_{\ge1}$, we have
\begin{multline*}
	\Pro \Big( \Big| \intnu e_{n,j}(\tau) \ \mathrm{d}(\hat H_{n}-H)(\tau) \Big| > 3 \delta \Big) \\
	\leq  
	\Prob( |A_{n,\ell}|>\delta) +  \Prob( |B_{n,\ell,1}|>\delta)  + \Prob( |B_{n,\ell,2}|>\delta),
	\end{multline*}
	where
	\begin{align} \label{eq:anl1}
		A_{n, \ell} &= \int_{0}^{\ell} e_{n,j}(\tau) \ \mathrm{d}(\hat H_{n}-H)(\tau)
		\end{align}
		and
		\begin{align} \label{eq:bnlv1}
		B_{n,\ell,1} &=  \int_{\ell}^{\infty} e_{n,j}(\tau) \ \mathrm{d}\hat H_{n}(\tau),  \quad
		B_{n,\ell,2} = \int_{\ell}^{\infty} e_{n,j}(\tau) \ \mathrm{d}H(\tau) .
	\end{align}
The proof is finished once we have shown that 
\begin{align} \label{eq:anl2}
\forall\, \ell \in \N_{\ge1}: \quad	A_{n, \ell} =  \op,
\end{align}
and that, for $v\in\{1,2\}$,
\begin{align}  \label{eq:bnlv2}
 \lim_{\ell \to \infty} \limsup_{n \to \infty} 
 \Pro \big( |B_{n,\ell,v} | > \delta \big) = 0.
 \end{align}

We start by showing \eqref{eq:anl2}. Fix $\ell\in\N_{\ge 1}$.
From the proof of Lemma 9.2 in \cite{BerBuc18}, we have
\[ 
\sup_{\tau \in [0,\ell]} \big| \hat H_{n}(\tau)-H(\tau) \big| = \op.
\] 
Next, some thoughts reveal that the proof of Theorem 4.1 in \cite{Rob09} in fact allows for setting $\sigma =0$ in his notation, such that 
\[
\{e_{n,j}(\tau)\}_{\tau \in [0,\ell]} \wto \{e_j(\tau)\}_{\tau \in [0,\ell]}
\] 
in $D([0,\ell])$, for some centered Gaussian process $e_{j}$ (see also Theorem~\ref{weak_conv} for an analogous result for the sliding blocks version $e_{\scs n,j}^{\scs \slb}$). The previous two displays  imply \eqref{eq:anl2} by Lemma C.8 in \cite{BerBuc17}.

Next, consider \eqref{eq:bnlv2} with $v=1$. We have
\begin{align*} 
B_{n,\ell,1}
&= 
k_n^{-3/2} \sum_{i,i'=1}^{k_n} \Big\{ \! \I\big( N_{b_n,i'}^{(Z_{ni})} \! = j\big) - \varphi_{n,j}(Z_{ni}) \Big\} \I(Z_{ni} \geq  \ell)   \\
&= T_{n,\ell} \! 
	+S_{n,\ell,1} \! +S_{n,\ell,2},
	\end{align*}
	where 
	\begin{align*}
	T_{n,\ell} &=  k_n^{-3/2} \sum_{i=1}^{k_n} \sum_{i' \in \{i-1,i,i+1\}} \Big\{ \I\big( N_{b_n,i'}^{(Z_{ni})} = j\big) - \varphi_{n,j}(Z_{ni}) \Big\} \I(Z_{ni} \geq \ell),\\
	S_{n,\ell,1} &=  k_n^{-3/2} \sum_{i=3}^{k_n} \sum_{i'=1}^{i-2} \Big\{ \I\big( N_{b_n,i'}^{(Z_{ni})} = j\big) - \varphi_{n,j}(Z_{ni}) \Big\} \I(Z_{ni} \geq \ell), \\
	S_{n,\ell,2} &=  k_n^{-3/2} \sum_{i=1}^{k_n-2} \sum_{i'=i+2}^{k_n} \Big\{ \I\big( N_{b_n,i'}^{(Z_{ni})} = j\big) - \varphi_{n,j}(Z_{ni}) \Big\} \I(Z_{ni} \geq \ell).
	\end{align*}	
Clearly,  $|T_{n,\ell}| \le 3k_n^{-1/2}=o(1)$. 
Next, write  $\eps=\eps_1\in(0,1)$ and $c>1-\eps$ from Condition~\ref{cond}(iii) as $c=1-\kappa \eps$  for some $\kappa \in (0,1)$, and let
\begin{align*} 
C_n = C_n(\varepsilon) 
= \big\{ \max_{i=1,\ldots,k_n} Z_{ni} < \kappa \varepsilon b_n \big\}
= \big\{ \min_{i=1,\ldots,k_n} N_{ni} > 1 - \kappa \varepsilon \big\},
\end{align*}
where $N_{ni}= \max\{U_s: s \in I_i^{\djb}\}$. We obtain $\Pro(C_n) \to 1$ as $n \to \infty$ by Condition \ref{cond}(iii). As a consequence, \eqref{eq:bnlv2} with $v=1$ follows once we have  shown that
\begin{align} \label{eq:snc}
	\lim_{\ell \to \infty} \limsup_{n \to \infty} \Pro \big( |S_{n,\ell,w} \I_{C_n} | > \delta \big) = 0, \quad w \in \{1,2\}.
\end{align} 

We only prove this for the term $S_{n,\ell,1}$, as $S_{n,\ell,2}$ can be treated analogously. 
Define $N_{b_n,j,\varepsilon}^{\scs (\tau)}$ as $N_{b_n,j}^{\scs (\tau)}$ and $Z_{ni}^{\varepsilon}$ as $Z_{ni}$, but with $U_s$ substituted by $U_s^{\varepsilon} = U_s \I(U_s > 1- \varepsilon)$, respectively. Then, $Z_{ni} < \eps \kappa b_n$ iff $Z_{ni}^{\eps \kappa} < \eps \kappa b_n$, and in that case we have 
\begin{compactenum}
\item[(1)] $Z_{ni}=Z_{ni}^{\eps \kappa}$,
\item[(2)] $U_s>1-Z_{ni}^{\eps \kappa}$ iff $U_s^\eps>1-Z_{ni}^{\eps \kappa}$.
\end{compactenum}
As a consequence, $S_{n,\ell,1} \I_{C_n} = S_{n,\ell,1}^{\varepsilon} \I_{C_n}$,
where 
	\begin{align*}
	S_{n,\ell,1}^{\varepsilon} 
	&= 
	\frac{1}{k_n} \sum_{i=3}^{k_n} f_{n,i-2}(Z_{ni}^{\varepsilon \kappa}) \ \I \big( \varepsilon \kappa  b_n > Z_{ni}^{\varepsilon \kappa} \geq \ell \big) 
	\end{align*}
	and where
	\begin{align} \label{eq:fn}
	f_{n,i-2}(\tau) &= k_n^{-1/2} \sum_{i'=1}^{i-2} \Big\{ \I \big( N_{b_n,i',\varepsilon}^{(\tau)} = j \big) -\varphi_{n,j}(\tau) \Big\}.
	\end{align} 
	We may further write $f_{n,i-2}(\tau)= h_{n,i-2,j}(\tau)-h_{n,i-2,j-1}(\tau)$, where 
	\begin{align} \label{eq:hnip}
	h_{n,i-2,p}(\tau)= k_n^{-1/2} \sum_{i'=1}^{i-2} \I \big( N_{b_n,i',\varepsilon}^{(\tau)} \leq p \big) - \Pro \big( N_{b_n,i'}^{(\tau)} \leq p \big), \quad p \in \N_{\ge0}. 
	\end{align}
	Next, we apply Bradley's coupling lemma (see Lemma~\ref{Bradley} in the appendix) 
with $X= (U_s^{\varepsilon})_{s \in I_1 \cup \dots \cup I_{i-2}},  Y=Z_{ni}^{\varepsilon \kappa}$ and $q=q_n = ||Z_{n1}^{\varepsilon \kappa}||_{\gamma}/(\sqrt{k_n} b_n)$ for some $\gamma >0$. 
		We obtain the  existence of a random variable $Y^{\ast} = Z_{ni}^{\ast \varepsilon \kappa}$, which is independent of $(U_s^{\varepsilon})_{s \in I_1 \cup \dots \cup I_{i-2}}$, has the same distribution as $Z_{ni}^{ \varepsilon \kappa}$ and satisfies
	\[ 
	\Pro (|Z_{ni}^{\varepsilon \kappa}-Z_{ni}^{\ast \varepsilon \kappa}|>q) \leq 18 \ (\sqrt{k_n}b_n)^{\frac{\gamma}{2 \gamma +1}} \alpha_{\eps}(b_n)^{\frac{2 \gamma}{2 \gamma +1}}.
	\] 
	Thus, we obtain the bound
	\begin{align}\nonumber
	\Exp \big[ |S_{n,\ell,1}^{\varepsilon}| \big] 
	&\leq 
	\frac{1}{k_n} \sum_{i=3}^{k_n} \sum_{p \in \{j-1,j\}} \Exp \Big[ |h_{n,i-2,p}(Z_{ni}^{\varepsilon \kappa})|  \I \big( \varepsilon b_n \kappa > Z_{ni}^{\varepsilon \kappa} \geq \ell \big) \\
	&\hspace{6cm} 
	\times  \I \big( |Z_{ni}^{\varepsilon \kappa} - Z_{ni}^{\ast \varepsilon \kappa}| < q \big) \Big] \nonumber \\
	&\phantom{{}\le{}} 
	+ 36 \ \frac{1}{k_n}  \sum_{i=3}^{k_n} k_n^{-1/2} i (\sqrt{k_n}b_n)^{\frac{\gamma}{2 \gamma +1}} \alpha_{\eps}(b_n)^{\frac{2 \gamma}{2 \gamma +1}, \label{exp_snl1}
    }
	\end{align}
	where the second sum is of the order
	\[ 
	O\Big( k_n^{\frac{1}{2}+\frac{\gamma}{4 \gamma +2}} b_n^{\frac{\gamma (1-2 \eta)}{2 \gamma +1}} \Big) 
	= O \Big( \big( \big(k_n b_n^{-\frac{2 \gamma (2 \eta -1)}{3 \gamma +1}} \big) \big)^{\frac{3 \gamma +1}{4 \gamma +2}} \Big) = o(1) 
	\] 
	by Condition \ref{cond}(ii), choosing $\gamma = \eta/(\eta-2) >0$.
	To bound the first sum, note that for all $x,y \geq 0$ with $y-a \leq x \leq y+a$ for some $a > 0$, we have, for any $p\in\N_{\ge 0}$,
	 \begin{align}
	 \label{eq:hni2}
	 |h_{n,i,p}(x)| 
	 \leq 
	 \max \big\{ |h_{n,i,p}(y+a)|, |h_{n,i,p}((y-a)_+) | \big\} + 2  a \sqrt{k_n}
	 \end{align}
	 where $z_+=\max(z,0)$, 
	 which follows from monotonicity arguments. Indeed, $\tau \le \tau'$ implies $N^{\scs (\tau)}_{b_n,1} \le N^{\scs (\tau')}_{b_n,1}$, whence, for $y+a \ge x\ge y-a\ge 0$,
	\begin{align*}
 0 < h_{n,i,p}(x) 
 & \leq 
 h_{n,i,p}(y-a) +  \sqrt{k_n} \Prob(  N_{b_n,1}^{(y-a)}\le p <  N_{b_n,1}^{(y+a)}) \\
  & \leq 
 h_{n,i,p}(y-a) +  \sqrt{k_n} \Prob(  N_{b_n,1}^{(y+a)} - N_{b_n,1}^{(y-a)}  \ge 1) \\
	& \leq  
	h_{n,i,p}(y-a) +  \sqrt{k_n} \Exp [ N_{b_n,1}^{(y+a)} - N_{b_n,1}^{(y-a)}  ]\\
	&= h_{n,i,p}(y-a) +  2a\sqrt{k_n} ,
	\end{align*}
	where we have used the facts that $N^{\scs (\tau)}_{b_n,1}$ is integer-valued. A similar inequality to the bottom implies \eqref{eq:hni2}.
	As a consequence of  \eqref{eq:hni2}, we may bound the first sum on the right-hand side of \eqref{exp_snl1} by
	\begin{align*}
	\frac{1}{k_n} \sum_{i=3}^{k_n} \sum_{p \in \{j-1,j\}} \Exp \Big[ 
	& 
	\Big\{ |h_{n,i-2,p}(Z_{ni}^{\ast \varepsilon \kappa}+q_n) | + |h_{n,i-2,p}((Z_{ni}^{\ast \varepsilon \kappa}-q_n)_+)| \\
	&+ 2 \|Z_{n1}^{\varepsilon \kappa}\|_{\gamma}/b_n \Big\} \I \big( \varepsilon b_n \kappa  + q_n > Z_{ni}^{\ast \varepsilon \kappa} \geq \ell -q_n \big) \Big].
	\end{align*}
	Now, since $Z_{n1}^\eps/b_n \le 1$ and $q_n \to 0$, we have
	\[ 
	\limsup_{n \to \infty} \|Z_{n1}^{\varepsilon \kappa}\|_{\gamma}/b_n \ \Pro \big( \varepsilon b_n \kappa >Z_{ni}^{\ast \varepsilon \kappa} \geq \ell -q_n \big) 
	\leq \limsup_{n \to \infty} \Pro \big( Z_{ni}^{\varepsilon \kappa} \geq \ell \kappa \big)
	\]
	which converges to 0 as $\ell \to \infty$. Hence, for proving \eqref{eq:snc} with $w=1$, it remains to treat, for $p \in \{j-1,j\}$, 
	\begin{align}
	\frac{1}{k_n} \sum_{i=3}^{k_n} \Exp \Big[ \big\{ |h_{n,i-2,p}((Z_{ni}^{\ast \varepsilon \kappa} \pm q)_+) | \I 
	\big( \varepsilon b_n \kappa  + q_n > Z_{ni}^{\ast \varepsilon \kappa} \geq \ell-q_n \big) \Big]. \label{h_plusminus}
	\end{align}
	We only consider the case with the plus sign. After conditioning on $Z_{ni}^{\scs \ast \varepsilon \kappa}$ we need to bound $\Exp[|h_{n,i-2,v}(x)|]$ for $\ell \le x \le \eps b_n$ (note that $Z_{ni}^{\scs \ast \varepsilon \kappa} + q_n \le \eps b_n \kappa + 2q_n \le  \eps b_n$ for large $n$, since $q_n$ converges to zero). Write $h_{n,i-2,p} = h_{n,i-2,p}^{even} + h_{n,i-2,p}^{odd}$, where $h_{n,i-2,p}^{even}$ and $h_{n,i-2,p}^{odd}$ correspond to the sum over the even and odd blocks in \eqref{eq:hnip}, respectively. Set 
	\[ 
	V_j(x) = \big\{ \I \big( N_{b_n, 2j, \varepsilon}^{(x)} \leq p \big) - \Pro \big( N_{b_n, 2j}^{(x)} \leq p \big) \big\}, 
	\] 
	such that $h_{n,i-2,v}^{even} = k_n^{-1/2} \sum_{j=1}^{\lfloor i/2 \rfloor -1} V_j $. Note that $V_j$ is centered for $\ell \le x \le \eps b_n$. Recursive application of Bradley's coupling lemma (see Lemma~\ref{Bradley}) 
	with some $\gamma > 0, V_1^{\ast}=V_1$ and, in the $j$-th step, $X=(V_1^{\ast},\ldots,V_j^{\ast}), Y=V_{j+1}$ and $q'=q_n' =1/\sqrt{k_n}$
	(note that $\alpha(\sigma(V_j),\sigma(V_{j+1}) \leq \alpha_{\eps}(b_n)$) in combination with Theorem 5.1 in \cite{Bra05} lets us construct an i.i.d. sequence $(V_j^{\ast})_{j \geq 1}$, such that $V_j^{\ast}$ has the same distribution as $V_j$ and 
	\[ 
	\Pro (|V_j-V_j^{\ast}| \geq q_n') \leq 18 \ k_n^{\frac{\gamma}{4 \gamma +2}} \alpha_{\eps}(b_n)^{\frac{2 \gamma }{2 \gamma +1}}.
	\] 
	Note that the i.i.d. sequence $(V_j^{\ast})_{j \geq 1}$ is centered with $|V_j^\ast|\le 1$. As a consequence, 
	\begin{align}
	\Exp [|h_{n,i-2,v}^{even}(x)|]
	& \leq 
	k_n^{-1/2} \Exp \big[ \big| \textstyle{\sum_{j=1}^{\lfloor i/2 \rfloor -1}} V_j^{\ast} \big| \big] + i k_n^{-1/2} \Exp \big[ |V_j-V_j^{\ast}| \big] \nonumber \\
	& \leq 
	(i/k_n)^{1/2}  + ik_n^{-1/2} \big\{ q_n' +  18 \, k_n^{\frac{\gamma}{4 \gamma +2}} \alpha_{\eps}(b_n)^{\frac{2 \gamma }{2 \gamma +1}} \big\} \nonumber \\
	& \leq  
	(i/k_n)^{1/2} + i k_n^{-1} + 18  C \, k_n^{\frac{1}{2}+\frac{\gamma}{4 \gamma +2}} b_n^{-\eta \frac{2 \gamma }{2 \gamma +1}}  \label{h_even}
	\end{align} 
	A similar bound can be obtained for the sum over the odd blocks. Assembling terms, the expression in (\ref{h_plusminus}) can be bounded by 
	\begin{align*}
	&\phantom{{}={}} 
	\Pro \big( Z_{n1}^{\ast \varepsilon \kappa} \geq \ell-q_n \big) 
	\frac{1}{k_n} \sum_{i=3}^{k_n} \Big[ (i/k_n)^{1/2} + i k_n^{-1} + 18 C \, k_n^{\frac{1}{2}+\frac{\gamma}{4 \gamma +2}} b_n^{-\eta \frac{2 \gamma }{2 \gamma +1}}   \Big] \\
	&\lesssim   
	\Pro \big( Z_{n1}\geq \ell/2 \big) 
	\Big\{ 1+ k_n^{\frac{1}{2}+\frac{\gamma}{4 \gamma +2}} b_n^{-\eta \frac{2 \gamma }{2 \gamma +1}}   \Big\},
	\end{align*}
	where 
	\[
	k_n^{\frac{1}{2}+\frac{\gamma}{4 \gamma +2}} b_n^{-\eta \frac{2 \gamma }{2 \gamma +1}} = \big(k_n b_n^{-\frac{4 \eta \gamma}{3 \gamma +1}})^{\frac{3 \gamma +1}{4 \gamma +2}} =o(1) 
	\]
	by Condition \ref{cond}(ii), after setting $\gamma =1$.
	Hence, since $\lim_{n\to\infty}\Prob(Z_{n1} \ge \ell/2)=e^{-\theta\ell/2} \to 0$ for $\ell \to\infty$, we obtain \eqref{eq:snc} and hence \eqref{eq:bnlv2} with $v=1$.
Next, consider \eqref{eq:bnlv2} with $v=2$. By Markov's inequality 
	\begin{align*}
\Prob(|B_{n,\ell,2}|>\delta)
	\leq \delta^{-1} \! \int_{\ell}^{\infty}  \Exp[|e_{n,j}(\tau)|] \ \mathrm{d}H(\tau).
	\end{align*}
	Split the integral on the right-hand side into two integrals over $[\ell, \eps b_n]$ and $(\eps b_n, \infty)$. For $\tau \in [\ell, \eps b_n]$, we have $e_{n,j}(\tau) = f_{n,k_n}(\tau)$, with $f_{n,k_n}$ from \eqref{eq:fn}. Hence, similar as for the treatment of \eqref{h_plusminus}, see in particular relation (\ref{h_even}), we have $\Exp[|f_{n,k_n}(\tau)|] \lesssim  1+o(1)$, where the upper bound is uniform in $\tau$. As a consequence, the integral on the right-hand side of the previous display can be bounded by	
	\[(1+o(1)) \int_{\ell}^{\eps b_n} \,\di H(\tau)  + \sqrt{k_n} \int_{\eps b_n}^\infty \,\di H(\tau),
	\] 
	which converges to zero for $n \to \infty$ followed by $\ell \to \infty$. This proves \eqref{eq:bnlv2} with $v=2$.
\end{proof}

\begin{lem} \label{dj_weakconv}
	For any $m \in \N_{\ge 1}$, 
	\[
	\frac{1}{\sqrt{k_n}} \sum_{i=1}^{k_n} \big(W_{n,i}^{\djb}(1), \ldots, W_{n,i}^{\djb}(m)\big) 
	\wto 
	(s_1^{\djb},\ldots,s_m^{\djb}) \sim \Nor_{m}(0,\Sigma_{m}^{\djb}), 
	\]
	where $W_{n,i}^{\djb}(j)$ and $\Sigma_{m}^\djb = (d_{j,j'}^\djb)_{1\leq j,j' \leq m}$ are defined in \eqref{eq:wni} and \eqref{eq:sigmadb}, respectively. 
\end{lem}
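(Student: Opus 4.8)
The plan is to reduce the statement, via the Cramér--Wold device, to a univariate central limit theorem for a stationary sequence of bounded block functionals, and then to prove the latter by the big-blocks/small-blocks construction already employed in the proof of Lemma~\ref{fidis}. Fix $a=(a_1,\dots,a_m)^\top\in\R^m$ and put $S_{n,i}=\sum_{j=1}^m a_j W_{n,i}^\djb(j)$ with $W_{n,i}^\djb$ from \eqref{eq:wni}. Since each indicator is bounded by $1$, each $\varphi_{n,j}\in[0,1]$ and $H$ is a probability measure, one has $\sup_{n,i}|S_{n,i}|\le 2\sum_{j}|a_j|<\infty$; moreover $S_{n,1},\dots,S_{n,k_n}$ is stationary and $S_{n,i}$ depends on the observations only through the block $I_i^\djb$. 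First I would carry out the by now routine truncation: on the event $C_n$ from the proof of Lemma~\ref{dj_ejn_integral} (which has probability tending to one by Condition~\ref{cond}(iii)) each $S_{n,i}$ agrees with a version that is $\mathcal B^{\varepsilon_1}_{(i-1)b_n+1:ib_n}$-measurable, provided one first discards the tail $\tau>\varepsilon_1 b_n$ of the $\di H(\tau)$-integral in \eqref{eq:wni}, whose contribution to each $W_{n,i}^\djb(j)$ is at most $2e^{-\theta\varepsilon_1 b_n}$, hence at most $2\sqrt{k_n}\,e^{-\theta\varepsilon_1 b_n}=o(1)$ after dividing by $\sqrt{k_n}$. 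It then suffices to prove $k_n^{-1/2}\sum_{i=1}^{k_n}S_{n,i}\wto\Nor(0,a^\top\Sigma_m^\djb a)$.

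The first substantial step is convergence of the variance. Writing $\Var\big(k_n^{-1/2}\sum_{i=1}^{k_n}S_{n,i}\big)=\Var(S_{n,1})+2\sum_{h=1}^{k_n-1}(1-h/k_n)\Cov(S_{n,1},S_{n,1+h})$, I would bound the off-diagonal terms with $h\ge 2$ by Lemma~3.9 in \cite{DehPhi02} together with $\alpha_{\varepsilon_1}((h-1)b_n)\le C((h-1)b_n)^{-\eta}$, giving $\sum_{h\ge 2}|\Cov(S_{n,1},S_{n,1+h})|\lesssim b_n^{-\eta}\sum_{h\ge 1}h^{-\eta}\to 0$, and treat the adjacent term $h=1$ by a separating-sequence argument: with $q_n\to\infty$, $q_n=o(b_n)$, truncate the $\di H$-integrals in $W_{n,1}^\djb,W_{n,2}^\djb$ to $[0,L]$ (error $O(e^{-\theta L})$), delete the last $q_n$ observations of each block from both the exceedance counts and from the block maxima (error $O(q_nL/b_n)$, using $\Pro(\max_s U_s>1-\tau/b_n)\le q_n\tau/b_n$), note that the trimmed functionals are separated by $q_n$ coordinates so that their covariance is $O(\alpha_{\varepsilon_1}(q_n))\to 0$, and finally let $L\to\infty$. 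For the diagonal term $\Var(S_{n,1})=\sum_{j,j'=1}^m a_j a_{j'}\Cov(W_{n,1}^\djb(j),W_{n,1}^\djb(j'))$, I would decompose $W_{n,1}^\djb(j)=A_{1,j}+B_{1,j}$ into its $\di H$-integral and its (centered) $\varphi_{n,j}(Z_{n1}^\djb)$-part, and identify the four limiting covariances $\Cov(A_{1,j},A_{1,j'})$, $\Cov(A_{1,j},B_{1,j'})$, $\Cov(B_{1,j},A_{1,j'})$, $\Cov(B_{1,j},B_{1,j'})$ using that, by Condition~\ref{cond}(i), $(N_{b_n,1}^{(\tau),\djb},N_{b_n,1}^{(\tau'),\djb})\wto(N_E^{(\tau)},N_E^{(\tau')})$ with law $p_2$, that $\{Z_{n1}^\djb>\mu\}=\{N_{b_n,1}^{(\mu),\djb}=0\}$ so that $(N_{b_n,1}^{(\tau),\djb},Z_{n1}^\djb)$ converges to the pair $(N_E^{(\tau)},Z)$ described in Theorem~\ref{theo:djb}, that $\varphi_{n,j}(z)\to p^{(z)}(j)$ boundedly, and dominated convergence for the $\di H(\tau)\,\di H(\tau')$-integrals. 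Summing the four pieces reproduces exactly the expansion of $d_{j,j'}^\djb$ in \eqref{eq:sigmadb}; this calculation runs parallel to the proof of Theorem~4.1 in \cite{Rob09} (taking $\sigma=0$ there) and to the proof of Lemma~\ref{Sliding_Cov}.

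With the variance established, the central limit theorem would follow by repeating the blocking argument from the fidis part of the proof of Lemma~\ref{fidis}. Choose $k_n^\ast\to\infty$ sufficiently slowly (for instance $k_n^\ast=o(k_n^{1/4})$), group the $S_{n,i}$'s into $q_n^\ast=O(k_n/k_n^\ast)$ big blocks of $k_n^\ast$ consecutive summands separated by short gaps of $b_n$ observations, discard the gap blocks (their total contribution has variance $O(1/k_n^\ast)=o(1)$ by boundedness and the mixing bound), and observe that a standard characteristic-function argument lets one treat the big-block sums as independent because $q_n^\ast\alpha_{\varepsilon_1}(b_n)\le k_n\alpha_{\varepsilon_1}(b_n)=o(1)$ by Condition~\ref{cond}(ii). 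The resulting bounded, row-wise independent triangular array has summands of order $k_n^\ast/\sqrt{k_n}$ and variances summing to $a^\top\Sigma_m^\djb a$ by the previous step, so the Lyapunov ratio is $O(k_n^\ast/\sqrt{k_n})=o(1)$ and Lyapunov's theorem finishes the proof.

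The hard part will be the variance identification of the second paragraph: pinning down the joint limiting laws of the block exceedance counts at two levels and of these counts together with the block maxima, pushing those weak limits through the double $\di H$-integral by dominated convergence, and — in order that only $\Var(S_{n,1})$ survives — making the separating-sequence estimate work so that consecutive disjoint blocks are asymptotically uncorrelated. The blocking/Lyapunov step, by contrast, is essentially a transcription of the argument already carried out for Lemma~\ref{fidis}.
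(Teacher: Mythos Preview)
Your proposal is correct and follows the same overall strategy as the paper: Cram\'er--Wold, identification of the single-block limiting covariance via the weak limits $(N_{b_n,1}^{(\tau)},N_{b_n,1}^{(\tau')})\wto(N_E^{(\tau)},N_E^{(\tau')})$ and $(N_{b_n,1}^{(\tau)},Z_{n1})\wto(N_E^{(\tau)},Z)$, and a Lyapunov CLT for bounded block functionals under $\alpha$-mixing. The variance calculation you sketch is essentially identical to the paper's treatment of the $C_{n,j}$-, $D_{n,j}$- and mixed terms.

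The difference lies in how asymptotic independence is manufactured. The paper does not keep the full blocks and then superimpose a big-block/small-block layer as you do; instead it trims each disjoint block $I_i$ by $\ell_n$ observations on the right (with $\ell_n$ from Condition~\ref{cond}(ii)), shows that replacing $Z_{ni},N_{b_n,i}^{(\tau)}$ by their trimmed versions $Z_{ni}^+,N_{b_n,i}^{(\tau),+}$ is asymptotically negligible (citing \cite{BerBuc18} and \cite{Rob09} for the two pieces), and then observes that the $k_n$ trimmed summands are already separated by $\ell_n$ observations, so $k_n\alpha_{\eps_1}(\ell_n)=o(1)$ makes them asymptotically independent and Lyapunov applies directly to the $k_n$ summands. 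This sidesteps your separate argument that $\Cov(S_{n,1},S_{n,1+h})\to 0$ for $h\ge 1$ and avoids the second blocking layer. Your route is slightly longer but more self-contained (it recycles the machinery of Lemma~\ref{fidis} rather than importing the trimming lemmas from \cite{BerBuc18,Rob09}); the paper's route is shorter but leans on those external results. One small slip: your Lyapunov ratio is $O((k_n^\ast)^2/\sqrt{k_n})$, not $O(k_n^\ast/\sqrt{k_n})$, which is why $k_n^\ast=o(k_n^{1/4})$ is the right speed; this does not affect the conclusion.
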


\begin{proof}[Proof of Lemma \ref{dj_weakconv}]
Throughout the proof, we omit the upper index $\djb$ at all instances of $\hat H_n^{\djb}, e_{n,j}^\djb$ and $Z_{ni}^\djb$. 
	Define 
	\[ 
	B_{n,j} = \frac{1}{\sqrt{k_n}} \sum_{i=1}^{k_n} \Big\{ \varphi_{n,j}(Z_{ni}) - \Exp[\varphi_{n,j}(Z_{ni})] \Big\}, \quad j \in \N_{\ge1}. 
	\]
	Decompose each block $I_i =I_i^\djb = I_i^+ \cup I_i^-$, $i = 1, \ldots,k_n$, into a big block $I_i^+ = \{ (i-1)b_n+1,\ldots, ib_n-\ell_n \}$ and a small one $I_i^-=\{ib_n-\ell_n+1,\ldots,ib_n\}$, where $\ell_n$ is from Condition \ref{cond}(ii), and define $Z_{ni}^+ = b_n(1-N_{ni}^+)$ with $N_{ni}^+ = \max\{U_s: s \in I_i^+\}$. 
	Set
	\[ 
	B_{n,j} ^+ = \frac{1}{\sqrt{k_n}} \sum_{i=1}^{k_n} \varphi_{n,j}(Z_{ni}^+) - \Exp[\varphi_{n,j}(Z_{ni}^+)], \quad j \in \N_{\ge1},
	\]
	and write
	 \[ 
	 B_{n,j} ^- = B_{n,j} - B_{n,j} ^+ = \frac{1}{\sqrt{k_n}} \sumki Y_{ni} - \Exp[Y_{ni}],
	 \]
	 where $Y_{ni} = \varphi_{n,j}(Z_{ni}) - \varphi_{n,j}(Z_{ni}^+)$.

	 The same arguments as in the proof of Lemma~9.3 in  \cite{BerBuc18} yield
	 \begin{align} \label{eq:bnjm}
	 B_{n,j} ^- = \op.
	 \end{align} 
    Further, define
	\begin{equation*}
	e_{n,j}^+(\tau) = \sqrt{k_n} \{ p_n^{(\tau),+}(j)- \Pro(N_{b_n,1}^{(\tau),+}=j) \}, \quad j \in \N_{\ge 1},
	\end{equation*}
	where
	\[ 
	p_n^{(\tau),+}(j) = \frac{1}{k_n} \sum_{i=1}^{k_n} \I(N_{b_n,i}^{(\tau),+}=j), \quad 
	N_{b_n,i}^{(\tau),+} = \sum\nolimits_{s \in I_i^+} \I(U_s > 1-\tau/b_n). 
	\]
	A slight adaptation of the proof of Lemma 6.6 in \cite{Rob09} (invoking Lemma 3.9  in \citealp{DehPhi02} instead of Lemma 6.3 in \citealp{Rob09}) shows that, for any $\tau >0$, $\Exp[| e_{n,j}(\tau) - e_{n,j}^+(\tau) |^2] \lesssim (\tau \ell_n/b_n)$ such that 
	\[ 
	\Exp \Big[ \Big| \intnu e_{n,j}(\tau) - e^+_{n,j}(\tau) \, \di H(\tau) \Big|^2 \Big] 
	\lesssim 
	(\ell_n/b_n) \intnu \tau  \, \di H(\tau),  
	\]
	which converges to 0 by Condition \ref{cond}(ii).  This implies that 
	\begin{align} \label{eq:enjm} 
	\intnu e_{n,j}(\tau) \ \mathrm{d}H(\tau) = \intnu e_{n,j}^+(\tau) \ \mathrm{d}H(\tau) + \op. 
	\end{align}
As a consequence of \eqref{eq:bnjm} and \eqref{eq:enjm}, we have
	\begin{align}  \label{eq:bem}
	& k_n^{-1/2} \sum_{i=1}^{k_n} W_{n,i}(j)  
	= \intnu e^+_{n,j}(\tau) \, \di H(\tau) + B^+_{n,j} + \op. 
	\end{align}
	
	Next, define $A_n^+ = \big\{ \min_{i=1,\ldots,k_n} N_{ni}^+ > 1- \varepsilon \big\}$ with $\eps=\eps_1$ from Condition \ref{cond}(ii), such that $\lim_{n\to\infty}\Pro(A_n^+) =1$ by Condition \ref{cond}(iii). Hence,  by \eqref{eq:bem} and the Cram\'er-Wold-device, the lemma is shown once we prove that
	\begin{align}
	& \sum_{j=1}^m \lambda_j  \bigg\{ \intnu e_{n,j}^+(\tau) \, \di H(\tau) + B_{n,j} ^+  \bigg\} \I_{A_n^+} \wto \sum_{j=0}^m \lambda_j s_{j} \label{CramerWold}
	\end{align}
	for arbitrary $\lambda_j \in \R$. For that purpose, rewrite the left-hand side of (\ref{CramerWold})  as $k_n^{-1/2} \sum_{i=1}^{k_n} f_{i,n} \I_{A_n^+}$, where 
	\begin{multline*}
	f_{i,n} = \sum_{j=0}^m \lambda_j \bigg\{ \intnu \I(N_{b_n,i}^{(\tau),+}=j) - \Pro(N_{b_n,1}^{(\tau),+}=j) \, \di H(\tau) \\
	+ \varphi_{n,j}(Z_{ni}^+) - \Exp[\varphi_{n,j}(Z_{ni}^+)] \bigg\}.
	\end{multline*}
	By the definition of $A_n^+$, we have 
	\[ 
	k_n^{-1/2} \sum_{i=1}^{k_n} f_{i,n} \I_{A_n^+} = k_n^{-1/2} \sum_{i=1}^{k_n} \tilde f_{i,n} + \op, 
	\] 
	where $\tilde f_{i,n} = f_{i,n} \I(Z_{ni}^+ < \varepsilon b_n)$. Observing that $\tilde f_{i,n}$ is $\mathcal{B}_{\{(i-1)b_n+1\}:\{ib_n-\ell_n\}}^{\varepsilon}$-measurable, a standard argument based on characteristic functions shows that $\{\tilde f_{i,n}: i=1, \dots, k_n\}$ may be considered independent in the remaining part of this proof.
To obtain asymptotic normality, we apply Ljapunov's central  limit theorem. First, note that $|\tilde f_{1,n}| \le 2 \sum_{j=1}^{m} |\lambda_j| < \infty$. 
	This implies, by stationarity,  for any $p>2$,
	\begin{align*}
	\frac{\sum_{i=1}^{k_n} \Exp[|\tilde f_{i,n}|^p]}{\big\{ \sum_{i=1}^{k_n} \Var(\tilde f_{i,n}) \big\}^{p/2}} 
	&= 
	k_n^{1-p/2}  \frac{\Exp[|\tilde f_{1,n}|^p]}{\Exp[|\tilde f_{1,n}|^2]^{p/2}} 
	\lesssim
	  k_n^{1-p/2} \Exp[\tilde f_{1,n}^2]^{-p/2},
	\end{align*}
	which converges to zero provided that  $\lim_{n \to \infty} \Exp[\tilde f_{1,n}^2]$ exists. The central limit theorem then implies that $k_n^{\scs -1/2} \sum_{i=1}^{k_n} \tilde f_{i,n}$ converges in distribution to a centered normal distribution with  variance $\lim_{n \to \infty} \Exp[\tilde f_{1,n}^2]$,
whence it remain to calculate the latter limit.	
	
	 For that purpose, note that $\lim_{n \to \infty} \Exp[\tilde f_{1,n}^2] = \lim_{n \to \infty} \Exp[f_{1,n}^2]$. Set 
	\begin{align*}
	 C_{n,j} &= \intnu \I(N_{b_n,1}^{(\tau),+}=j) - \Pro(N_{b_n,1}^{(\tau),+}=j) \, \di H(\tau), \\ 
	 D_{n,j} &= \varphi_{n,j}(Z_{n1}^+) - \Exp[\varphi_{n,j}(Z_{n1}^+)],
	\end{align*}
	and note that 
	\begin{align*}
		\Exp[f_{1,n}^2] = \sum_{j,j'=1}^{m} \lambda_j \lambda_{j'} \Exp[(C_{n,j}+D_{n,j}) (C_{n,j'}+D_{n,j'})],
	\end{align*}
	which we need to show to converge to $ \sum_{j,j'=1}^{m} \lambda_j \lambda_{j'} \Exp[s_js_{j'}]$.
	Similar arguments as in the proof of \eqref{eq:bnjm} and \eqref{eq:enjm} allow us to replace $I_1^+$ by $I_1$.
	
 We start by considering the product of the $C_{n,j}$-terms. 
	Invoking the dominated convergence theorem and 
	\[ 
	\Pro(N_{b_n,1}^{(\tau)}=j, N_{b_n,1}^{(\tau')}=j') \to \Pro(N_E^{(\tau)}=j, N_E^{(\tau')}=j') 
	\]
	with $(N_E^{(\tau)}, N_E^{(\tau')})$ as defined in Theorem~\ref{theo:djb} (following from Condition~\ref{cond}(i)), we obtain that
	\begin{align*}
	   \lim_{n\to\infty}
	   \Exp[C_{n,j}C_{n,j'}] 
	   &=  \intnu \intnu \Cov[\I(N_E^{(\tau)}=j) ,\I(N_E^{(\tau')}=j')] \, \di H(\tau) \di H(\tau').
	\end{align*}
		
	Second, we consider the product of the $D_{n,j}$-terms. For this purpose, we first show that $\varphi_{n,j}(Z_{n1})$ converges weakly to $p^{(Z)}(j)$, for $Z \sim \mathrm{Exp}(\theta)$, which in turn is a consequence of weak convergence of $Z_{n1}$ to $Z$ and the extended continuous mapping theorem. For the latter, one needs to prove that $\varphi_{n,j}(x_n) \to p^{(x)}(j)$ for any $x_n \to x$, which follows from
	\begin{align*}
	|\varphi_{n,j}(x_n)-\varphi_{n,j}(x)| & \leq \Exp \big[ | \I(N_{b_n,1}^{(x_n)}=j) - \I(N_{b_n,1}^{(x)}=j)| \big] \\
	& \leq \Exp \big[ \I( |N_{b_n,1}^{(x_n)} - N_{b_n,1}^{(x)}| \geq 1 ) \big] 
	\leq \Exp \big[ |N_{b_n,1}^{(x_n)} - N_{b_n,1}^{(x)}| \big] \\
	&= \Exp \big[ N_{b_n,1}^{(x_n\vee x)} - N_{b_n,1}^{(x_n \wedge x)} \big]
	= |x_n - x|.
	\end{align*}
	Likewise, $\varphi_{n,j}(Z_{n1})\varphi_{j',n}(Z_{n1})$ weakly converges to $p^{(Z)}(j)p^{(Z)}(j')$. Since $|\varphi_{n,j}| \leq 1$, Theorem 2.20 in \cite{Van98} implies convergence of the corresponding moments, i.e., 
	\begin{align*}
		\Exp[D_{n,j}D_{n,j'}] 
		& = \Cov\big( \varphi_{n,j}(Z_{n1}), \varphi_{n,j'}(Z_{n1}) \big) 
		 = \Cov \big( p^{(Z)}(j), p^{(Z)}(j') \big) + o(1).
	\end{align*}
	
	With regard to the mixed $C_{n,j}$- and $D_{n,j'}$-terms, note that, for $j\in\N_{\ge0}$ and $\mu\ge 0$,
	\begin{align*}
	\Pro(N_{b_m,1}^{(\tau)} =j, Z_{n1} >\mu) &= \Pro(N_{b_n,1}^{(\tau)} =j, N_{b_n,1}^{(\mu)} =0)  \\
	&\to 
		\begin{cases}
	p_2^{(\tau,\mu)}(j,0)  & ,\tau \geq \mu \ge 0 \\
	e^{-\theta \mu} \ind(j=0) & ,\mu> \tau \ge 0,
	\end{cases}  
	\end{align*}
	such that $(N_{b_n,1}^{(\tau)},Z_{1:n}) \wto (N_E^{(\tau)},Z)$ with $(N_E^{(\tau)},Z)$ as specified in Theorem~\ref{theo:djb}. The extended continuous mapping theorem and boundedness, $|\varphi_{n,j}| \le 1$, implies
	\begin{align*}
		\Exp [C_{n,j}D_{n,j'}] 
		&= \intnu \Cov\big\{ \I(N_{b_n,1}^{(\tau)}=j), \varphi_{n,j'}(Z_{n1}) \big\} \, \di H(\tau) \\
		&= \ \intnu \Cov\big\{ \I(N_{E}^{(\tau)}=j), p^{(Z)}(j') \big\} \, \di H(\tau)  +o(1).
	\end{align*}
	
	The last three paragraphs imply
	\begin{align*}
	 \lim_{n\to\infty} \Exp[(C_{n,j}+D_{n,j}) (C_{n,j'}+D_{n,j'})]
	 =
	 d_{j,j'}
	\end{align*}
	with $d_{j,j'}=d_{j,j'}^\djb$ from \eqref{eq:sigmadb}, which finalizes the proof.
\end{proof}


\section{Auxiliary lemmas - Sliding blocks}

Throughout, we assume that Condition \ref{cond} is met and that, additionally, $\sqrt{k_n} \beta_{\varepsilon_2}(b_n)=o(1)$ for some $\varepsilon_2 >0$. All convergences are for $n \to \infty$ if not stated otherwise. We will also occasionally omit the upper index $\slb$ at $\hat H_n^{\slb}, e_{n,j}^\slb$ and $Z_{ni}^\slb$.

\begin{lem} \label{sl_ejn_integral}
	For any $j \in \N_{\ge 1}$,
	\[ \intnu e_{n,j}^{\slb}(\tau) \ \mathrm{d}(\hhutsl -H)(\tau) = \op. \]
\end{lem}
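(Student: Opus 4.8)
The plan is to mirror the proof of Lemma~\ref{dj_ejn_integral} almost verbatim, systematically replacing every disjoint blocks object by its sliding blocks analogue and invoking Theorem~\ref{weak_conv} at the one place where the disjoint blocks proof appealed to (the $\sigma=0$ specialisation of) Theorem~4.1 in \cite{Rob09}. Throughout, drop the superscript $\slb$. Fix $\delta>0$ and $\ell\in\N_{\ge 1}$ and split
\[
\intnu e_{n,j}(\tau)\,\di(\hat H_n-H)(\tau)=A_{n,\ell}+B_{n,\ell,1}+B_{n,\ell,2},
\]
where $A_{n,\ell}=\int_0^\ell e_{n,j}\,\di(\hat H_n-H)$, $B_{n,\ell,1}=\int_\ell^\infty e_{n,j}\,\di\hat H_n$ and $B_{n,\ell,2}=\int_\ell^\infty e_{n,j}\,\di H$. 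It then suffices to show $A_{n,\ell}=\op$ for each fixed $\ell$ and $\lim_{\ell\to\infty}\limsup_{n\to\infty}\Pro(|B_{n,\ell,v}|>\delta)=0$ for $v\in\{1,2\}$. For $A_{n,\ell}$ I would first record the uniform law of large numbers $\sup_{\tau\in[0,\ell]}|\hat H_n(\tau)-H(\tau)|=\op$, which is the sliding blocks version of the bound extracted from the proof of Lemma~9.2 in \cite{BerBuc18} and follows from the same blocking-plus-coupling argument, using $\sqrt{k_n}\beta_{\eps_2}(b_n)=o(1)$ to pass to a row-wise independent array. Combined with $\{e_{n,j}(\tau)\}_{\tau\in[0,\ell]}\wto\{e_j^\slb(\tau)\}_{\tau\in[0,\ell]}$ in $D([0,\ell])$ towards a process with almost surely continuous paths, provided by Theorem~\ref{weak_conv}, the integration-by-parts device of Lemma~C.8 in \cite{BerBuc17} gives $A_{n,\ell}=\op$.

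For $B_{n,\ell,2}$ I would apply Markov's inequality to reduce to $\int_\ell^\infty\Exp[|e_{n,j}(\tau)|]\,\di H(\tau)$, split the integral at $\eps_1 b_n$, bound $|e_{n,j}(\tau)|\le 2\sqrt{k_n}$ on $(\eps_1 b_n,\infty)$ against the super-exponentially small mass $H((\eps_1 b_n,\infty))=e^{-\theta\eps_1 b_n}$, and on $[\ell,\eps_1 b_n]$ establish the uniform-in-$\tau$ bound $\Exp[|e_{n,j}(\tau)|]\lesssim 1$ by the block decomposition together with Bradley's coupling (Lemma~\ref{Bradley}), exactly as in the estimate leading to \eqref{h_even} inside the proof of Lemma~\ref{dj_ejn_integral}. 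Since $\int_\ell^{\eps_1 b_n}\di H(\tau)\le e^{-\theta\ell}\to 0$ as $\ell\to\infty$, this handles $v=2$.

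The main obstacle is $B_{n,\ell,1}$. Using $U_s>1-Z_{ni}/b_n\Leftrightarrow X_s>M_{ni}^\slb$ almost surely, by continuity of $F$, one has
\[
B_{n,\ell,1}=\frac{\sqrt{k_n}}{(n-b_n+1)^2}\sum_{i,i'=1}^{n-b_n+1}\big\{\I(N_{b_n,i'}^{(Z_{ni}),\slb}=j)-\varphi_{n,j}(Z_{ni})\big\}\I(Z_{ni}\ge\ell).
\]
I would first split off the near-diagonal pairs $|i-i'|<2b_n$: their contribution is bounded in absolute value by $4\sqrt{k_n}\,b_n/(n-b_n+1)=O(k_n^{-1/2})=o(1)$, deterministically. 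For the remaining pairs, intersect with the event $C_n=\{\max_i Z_{ni}^\slb<c b_n\}$, whose probability tends to $1$ by Condition~\ref{cond}(iii) --- indeed every sliding window $I_i^\slb$ of length $b_n$ contains a full half-block of the kind appearing in that condition, so $\min_i N_{ni}^\slb\ge\min_i N_{ni}'$. On $C_n$ one may replace all $U_s$ by the truncations $U_s^{\eps_1}$, making the $\alpha_{\eps_1}$-mixing coefficients available. For fixed $i$, condition on $Z_{ni}^{\eps_1\kappa}$ (with $c=1-\kappa\eps_1$, $\kappa\in(0,1)$) and apply Bradley's coupling to decouple it from the block functionals indexed by $i'\le i-2b_n$ (and symmetrically $i'\ge i+2b_n$), whose windows lie at least $b_n$ positions away; splitting those far blocks into two interlaced subfamilies that are themselves $b_n$ apart and applying Bradley's coupling iteratively, in combination with Theorem~5.1 in \cite{Bra05}, replaces the inner sum by a centered, bounded, row-wise i.i.d.\ average, to which a crude $L^2$ estimate applies with constants uniform over $Z_{ni}^{\eps_1\kappa}\in[\ell,\eps_1 b_n]$. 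By Condition~\ref{cond}(ii) the resulting bound is $\lesssim\Pro(Z_{n1}\ge\ell/2)\{1+o(1)\}$, which vanishes when letting $n\to\infty$ and then $\ell\to\infty$. This step is essentially the treatment of $S_{n,\ell,1}$ in the proof of Lemma~\ref{dj_ejn_integral}; the only genuine new bookkeeping is that $Z_{ni}^\slb$ now depends on a length-$b_n$ window rather than a disjoint block, so the number of ``adjacent'' indices $i'$ grows from $O(1)$ to $O(b_n)$, which is harmless after division by $(n-b_n+1)^2$. Assembling the three estimates and sending $\ell\to\infty$ completes the argument; I expect the decoupling of $Z_{ni}^\slb$ from the overlapping block functionals, and checking that the near-diagonal band really is $o(1)$ after the sliding-blocks normalisation, to be the only delicate points.
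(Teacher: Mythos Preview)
Your proposal is essentially correct and matches the paper's strategy: the same three-term split $A_{n,\ell}+B_{n,\ell,1}+B_{n,\ell,2}$, the same treatment of $A_{n,\ell}$ via Theorem~\ref{weak_conv} in place of Robert's Theorem~4.1, and the same Markov/uniform-$L^1$ bound for $B_{n,\ell,2}$.

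The one place where your description diverges from what the paper actually does is the decoupling step inside $B_{n,\ell,1}$. You propose to work with the sliding indices $i'$ directly and to ``split those far blocks into two interlaced subfamilies that are themselves $b_n$ apart''. That does not work as written: consecutive sliding windows $I_{i'}^\slb$ and $I_{i'+1}^\slb$ overlap in $b_n-1$ points, so to obtain subfamilies whose windows are $b_n$ apart you would need roughly $2b_n$ interlaced subfamilies, not two, and the even/odd trick from the proof of Lemma~\ref{dj_ejn_integral} does not transfer verbatim. The paper sidesteps this by first \emph{aggregating} the sliding indices into disjoint blocks: it rewrites
\[
B_{n,\ell,1}=k_n^{-3/2}\sum_{i,i'=1}^{k_n-1} b_n^{-2}\sum_{s\in I_i^\djb}\sum_{s'\in I_{i'}^\djb}\big\{\I(N_{b_n,s'}^{(Z_{ns}),\slb}=j)-\varphi_{n,j}(Z_{ns})\big\}\I(Z_{ns}\ge\ell)+o(1),
\]
and then separates $|i-i'|\le 2$ from $i'\le i-3$ and $i'\ge i+3$. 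After this regrouping the inner sum over $s'\in I_{i'}^\djb$ produces block functionals indexed by the \emph{disjoint} index $i'$, and one is genuinely back in the situation of Lemma~\ref{dj_ejn_integral}, where the even/odd splitting and Bradley's coupling (your \eqref{h_even}) go through unchanged. Your near-diagonal bound $O(k_n^{-1/2})$ and the use of $C_n$ via Condition~\ref{cond}(iii) are correct; only the bookkeeping for the far part needs this extra aggregation step.
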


\begin{proof}[Proof of Lemma \ref{sl_ejn_integral}]
The proof is very similar to the one of Lemma \ref{dj_ejn_integral}. In fact, we need to show that
\eqref{eq:anl2} and \eqref{eq:bnlv2} is met (for $v=1,2$), where $A_{n,\ell}$ and $B_{n,\ell,v}$ are defined as in \eqref{eq:anl1} and \eqref{eq:bnlv1}, but with $\hat H_n= \hat H_n^{\slb}$ and $e_{n,j}=e_{n,j}^\slb$. 

	Invoking Theorem~\ref{weak_conv} instead of Theorem 4.1 in \cite{Rob09}, the proof of \eqref{eq:anl2} is the same as in the proof  Lemma \ref{dj_ejn_integral}.
	
Regarding \eqref{eq:bnlv2} with $v=1$, write
	\begin{align*}
	B_{n,\ell,1} 
	&=  \frac{\sqrt{k_n}}{(n-b_n+1)^2} \sum_{i,i'=1}^{n-b_n+1} \Big\{ \I \big( N_{b_n,i'}^{(Z_{ni})} = j \big)  - \varphi_{n,j}(Z_{ni}) \Big\} \I (Z_{ni} \geq \ell) \\
	&=  
	k_n^{-3/2} \! \sum_{i,i'=1}^{k_n-1} b_n^{-2} \sum_{s \in I_i} \sum_{s' \in I_{i'}} \Big\{ \I \big( N_{b_n,s'}^{(Z_{ns})} = j \big)  - \varphi_{n,j}(Z_{n1}) \Big\}  \\
	&\hspace{7cm} \times  \I (Z_{ns} \geq \ell)+o(1) \\
	&=  V_{n,\ell,1} + V_{n,\ell,2} + o(1),
	\end{align*}
	where $V_{n,\ell,v}$ is made up from the same summands as in the line before, with the only difference that for $v=1$
	the sum over $i'$ ranges from 1 to $i-3$, and for $v=2$ it goes from $i+3$ to $k_n-1$ (c.f.\ the proof of Lemma~\ref{dj_ejn_integral}).
	
	Now, for $\varepsilon=\eps_1$ from Condition \ref{cond}(ii), write $c$ from Condition~\ref{cond}(iii) as $c=1-\eps \kappa$ with $\kappa \in (0,1)$, and let $C_n = \{ \min_{i=1,\ldots,n-b_n+1} N_{ni} > 1 - \varepsilon \kappa \}$, such that $\Pro(C_n) \to 1$ as $n \to \infty$ by Condition \ref{cond}(iii). As in the proof of Lemma \ref{dj_ejn_integral}, one can now show that 
	\[ 
	\lim_{\ell \to \infty} \limsup_{n \to \infty} \Pro \big( |V_{n,\ell,v} \I_{C_n}| > \delta \big) = 0
	\]
	for $v\in\{1,2\}$. 
	Likewise, as for the process $e_{n,j}^\djb$ in the disjoint blocks setting,  we obtain the bound $\Exp[|e_{n,j}^{\slb}(\tau)|]=O(1)$ uniformly in $\tau$, such that 
	\[ 
	\lim_{\ell \to \infty} \limsup_{n \to \infty} \Exp \Big[ \Big| \int_{\ell}^{\infty} e^{\slb}_{j,n}(\tau) \ \mathrm{d}H(\tau) \Big| \Big] 
	\lesssim \lim\limits_{\ell \to \infty}  e^{-\theta \ell} = 0. 
	\] This concludes the proof. 
\end{proof}

\begin{lem} \label{sl_weakconv}
		For any $m \in \N_{\ge 1}$, 
	\[
	\frac{\sqrt{k_n}}{n-b_n+1} \sum_{i=1}^{n-b_n+1} (W^{\slb}_{n,i}(1), \ldots, W^{\slb}_{n,i}(m)\big) 
	\wto 
	(s_1^{\slb},\ldots,s_m^{\slb}) \sim \Nor_{m}(0,\Sigma_{m}^{\slb}), 
	\]
	where $W_{n,i}^{\slb}(j)$ is defined as in \eqref{eq:wni} but with $\djb$ replaced by $\slb$, and where $\Sigma_{m}^\slb = (d_{j,j'}^\slb)_{1\leq j,j' \leq m}$ is defined in \eqref{eq:sigmasl}.
	\end{lem}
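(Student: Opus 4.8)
The plan is to follow the proof of Lemma~\ref{dj_weakconv} closely, replacing the asymptotic independence of disjoint blocks by the big-blocks/small-blocks decorrelation scheme already used in the proof of Lemma~\ref{fidis}, and to identify the limiting covariance matrix by building on Lemma~\ref{Sliding_Cov}. Throughout write $W_{n,i}=W_{n,i}^{\slb}$, $Z_{ni}=Z_{ni}^{\slb}$ and $I_i=I_i^{\slb}$. First I would carry out the standard reductions: for $n$ large, the summand $W_{n,i}(j)$ depends on $(U_s)_{s\in I_i}$ only through $U_s^{\eps_1}=U_s\I(U_s>1-\eps_1)$ on $\{Z_{ni}<\eps_1 b_n\}$, on which event the $\tau$-integral also only sees $\tau<\eps_1 b_n$ up to an error of order $e^{-\theta\eps_1 b_n}$; since Condition~\ref{cond}(iii) makes the probability that all sliding-block maxima exceed $1-\eps_1$ tend to $1$, multiplication by the corresponding indicator lets me assume that every $W_{n,i}(j)$ is bounded, real and $\Bc_{i:i+b_n-1}^{\eps_1}$-measurable. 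By the Cram\'er--Wold device it then suffices, for fixed $\lambda=(\lambda_1,\ldots,\lambda_m)^{\top}\in\R^m$ and $S_{n,i}=\sum_{j=1}^m\lambda_j W_{n,i}(j)$, to prove $\tfrac{\sqrt{k_n}}{n-b_n+1}\sum_{i=1}^{n-b_n+1}S_{n,i}\wto\Nor\big(0,\lambda^{\top}\Sigma_m^{\slb}\lambda\big)$.

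Next I would import the decomposition from the proof of Lemma~\ref{fidis} essentially verbatim: choose $k_n^{\ast}\to\infty$ with $k_n^{\ast}=o(k_n^{1/4})$, set $q_n^{\ast}=\lfloor k_n/(k_n^{\ast}+2)\rfloor$, group the window indices into $q_n^{\ast}$ big blocks of length $\sim k_n^{\ast}b_n$ separated by small blocks of length $\sim 2b_n$, and write $\tfrac{\sqrt{k_n}}{n-b_n+1}\sum_i S_{n,i}=V_n^{+}+V_n^{-}+\op$, where $V_n^{\pm}=(q_n^{\ast})^{-1/2}\sum_p T_{np}^{\pm}$. As there, $\Var(V_n^{-})=o(1)$ by Lemma~3.9 in \cite{DehPhi02} together with Condition~\ref{cond}(ii), so $V_n^{-}=\op$, and the $T_{np}^{+}$ may be treated as independent at the cost of $q_n^{\ast}\alpha_{\eps_1}(b_n)\lesssim k_n b_n^{-\eta}=o(1)$; since $|T_{np}^{+}|=O(\sqrt{k_n^{\ast}})$, Lyapunov's condition holds provided $\sigma^2(\lambda):=\lim_n\Var\big(\tfrac{\sqrt{k_n}}{n-b_n+1}\sum_i S_{n,i}\big)$ exists, and the central limit theorem then delivers the asserted limit with variance $\sigma^2(\lambda)$. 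This reduces everything, by bilinearity, to showing $\lim_n\tfrac{k_n}{(n-b_n+1)^2}\sum_{i,i'=1}^{n-b_n+1}\Cov\big(W_{n,i}(j),W_{n,i'}(j')\big)=d_{j,j'}^{\slb}$ for all $1\le j,j'\le m$, with $d_{j,j'}^{\slb}$ from \eqref{eq:sigmasl}.

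The decisive step is this covariance computation, which I would organise as in the proof of Lemma~\ref{dj_weakconv}: split $W_{n,i}(j)=C_{n,i}(j)+D_{n,i}(j)$ into the $\tau$-integral part $C_{n,i}(j)=\intnu\{\I(N_{b_n,i}^{(\tau),\slb}=j)-\varphi_{n,j}(\tau)\}\,\di H(\tau)$ and the centred part $D_{n,i}(j)=\varphi_{n,j}(Z_{ni})-\Exp[\varphi_{n,j}(Z_{ni})]$, and expand the covariance into the four groups $CC$, $CD$, $DC$, $DD$. For $CC$, Fubini's theorem gives $\tfrac{k_n}{(n-b_n+1)^2}\sum_{i,i'}\Cov(C_{n,i}(j),C_{n,i'}(j'))=\intnu\intnu\Cov\big(e_{n,j}^{\slb}(\tau),e_{n,j'}^{\slb}(\tau')\big)\,\di H(\tau)\,\di H(\tau')$, Lemma~\ref{Sliding_Cov} identifies the integrand's limit, and dominated convergence (the Step~1 truncation makes these inner covariances uniformly bounded and effectively supported on $\tau,\tau'<\eps_1 b_n$) passes the limit through the $H\otimes H$-integral, yielding the first term of \eqref{eq:sigmasl}. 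The $DD$-, $CD$- and $DC$-groups are not covered by any earlier result and must be treated by exactly the arguments proving Lemma~\ref{Sliding_Cov}: discard pairs with $|i-i'|$ beyond a slowly growing $q_n$ at an $\alpha_{\eps_1}(q_n)$-cost, turn the remaining sum into a Riemann integral over the overlap fraction $\xi\in(0,1)$, and split a sliding window into its $\xi b_n$-long overlap with the other window plus the two disjoint tails, sneaking boundary observations in and out at an $O(q_n/b_n)$-cost exactly as in \eqref{manipulation}. By Condition~\ref{cond}(i) this identifies the joint limits $(Z_{ni},Z_{ni'})\wto(X_{2,\xi},Y_{2,\xi})$ with $\Pro(X_{2,\xi}>x,Y_{2,\xi}>y)=\exp(-\theta\{(x\wedge y)\xi+(x\vee y)\})$ and $(N_{b_n,i}^{(\tau),\slb},Z_{ni'})\wto(X_{3,\xi}^{(\tau)},Y_{3,\xi})$ with the mixed mass/survival function displayed in Theorem~\ref{theo:slb}; the extended continuous mapping theorem together with $|\varphi_{n,j}|\le 1$ (Theorem~2.20 in \cite{Van98}) then turns these into the fourth, third and second terms of \eqref{eq:sigmasl}, and summing the four contributions gives $d_{j,j'}^{\slb}$.

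The hard part will be precisely this last step: transferring the overlap-decomposition argument of Lemma~\ref{Sliding_Cov} from counts to maxima and to mixed max/count functionals, and matching the resulting limits to the $X_{2,\xi}$, $X_{3,\xi}$, $Y_{3,\xi}$ families prescribed in Theorem~\ref{theo:slb}. Interchanging the $\tau$-integral with the $n\to\infty$ limit is only a secondary nuisance, made routine by the truncation at $\tau<\eps_1 b_n$ on the high-probability event of Step~1.
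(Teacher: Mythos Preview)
Your proposal is correct and follows essentially the same route as the paper: the big-blocks/small-blocks scheme of Lemma~\ref{fidis} for the CLT, and the four-term split of the covariance into the $CC$, $DD$ and mixed pieces, identified via Lemma~\ref{Sliding_Cov} and its overlap-decomposition argument adapted to $(Z_{ni},Z_{ni'})$ and $(N_{b_n,i}^{(\tau),\slb},Z_{ni'})$. The paper separates the covariance computation into its own lemma (Lemma~\ref{Sliding_OtherCov}), but the content is exactly what you outline.
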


\begin{proof}[Proof of Lemma \ref{sl_weakconv}]	By the Cramér-Wold device it suffices to show that 
	\begin{align*}
	& \sum_{j=1}^m \lambda_{j} \frac{\sqrt{k_n}}{n-b_n+1} \sum_{i=1}^{n-b_n+1} W^{\slb}_{n,i}(j) 
	\wto \sum_{j=1}^m \lambda_{j} s_j^{\slb}
	\end{align*}
	for arbitrary $\lambda_{j} \in \R$. Write the right-hand side as
	\begin{align*}
  	\sum_{i=1}^{k_n-1} \sum_{s \in I_i} \sum_{j=1}^m \lambda_{j} \frac{\sqrt{k_n}}{n-b_n+1} \Big\{ 
  	& \intnu \I(N_{b_n,s}^{(\tau)}=j) - \varphi_{n,j}(\tau) \, \di H(\tau) \nonumber \\
  	& + \varphi_{n,j}(Z_{ns}) - \Exp\big[ \varphi_{n,j}(Z_{ns}) \Big\} + \op, 
	\end{align*}
	where the small $o_\Prob(1)$ term is due to the fact that a negligible  number of summands has been omitted. To take care of the serial dependence of the sliding blocks, we apply a similar construction as in the proof of Lemma \ref{fidis}. Using the same notation as in that proof, write $V_n^{\pm} = (q_n^{\ast})^{-1/2} \sum_{i=1}^{\scs q_n^{\ast}} T_{ni}^{\pm}$ with
	\begin{multline*}
	T_{ni}^{\pm} = \sqrt{\frac{q_n^{\ast}}{k_n}} \sum_{s \in J_i^{\pm}} \sum_{j=1}^m \lambda_{j} \frac{n}{n-b_n+1} \frac{1}{b_n} 
	 \Big\{  \intnu \I(N_{b_n,s}^{(\tau)}=j)  - \varphi_{n,j}(\tau) \, \di H(\tau) \\
	 + \varphi_{n,j}(Z_{ns}) - \Exp\big[ \varphi_{n,j}(Z_{ns}) \big] \Big\}. 
	\end{multline*}
	Since 
	\[ 
	\Big| \intnu \I(N_{b_n,s}^{(\tau)}=j) - \varphi_{n,j}(\tau)\, \di H(\tau) \Big|
	+ 
	\big| \varphi_{n,j}(Z_{ns}) - \Exp\big[ \varphi_{n,j}(Z_{ns}) \big] \big| \leq 2, 
	\]
	we still obtain the upper bound in (\ref{Tn1_minus1}), and the remaining proof is the same as in Lemma \ref{fidis}.
   In particular, note that 
   \[ 
   T_{n1}^+ = \sum_{j=1}^m \lambda_{j} \frac{\sqrt{k^{\ast}_n}}{n^{\ast}-b_n+1} \sum_{i=1}^{n^{\ast}-b_n+1} W_{n^{\ast},i}(j) + R_n,
   \] 
   where $R_n \to 0$ in $L_2(\Prob)$ and $n^{\ast} = k_n^{\ast} b_n$, and that our assumptions in Condition \ref{cond} still hold if $n$ and $k_n$ are substituted by $n^{\ast}$ and $k^{\ast}_n$. The assertion then follows from Lemma~\ref{Sliding_OtherCov} below. 
\end{proof}

\begin{lem} \label{Sliding_OtherCov}
    For any $j,j' \in \N_{\ge 1}$, we have 
    \begin{align*}
   \lim_{n\to\infty}
    \Cov \Big( \frac{\sqrt{k_n}}{n-b_n+1} \sum_{i=1}^{n-b_n+1} W_{n,i}^{\slb}(j), \frac{\sqrt{k_n}}{n-b_n+1} \sum_{i=1}^{n-b_n+1} W_{n,i}^{\slb}(j') \Big) = d_{j,j'}^\slb,
    \end{align*}
    where $d_{j,j'}^\slb$ is defined in \eqref{eq:sigmasl}.
\end{lem}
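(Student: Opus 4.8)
The plan is to split each summand into a compound-probability part and a block-maximum part. Writing, in the notation of \eqref{eq:wni}, $C_{n,i}(j)=\intnu\{\I(N_{b_n,i}^{(\tau),\slb}=j)-\varphi_{n,j}(\tau)\}\,\di H(\tau)$ and $D_{n,i}(j)=\varphi_{n,j}(Z_{ni}^\slb)-\Exp[\varphi_{n,j}(Z_{n1}^\slb)]$, one has $W_{n,i}^\slb(j)=C_{n,i}(j)+D_{n,i}(j)$, so that the covariance in question expands into the four double sums coming from $\Cov(C_{n,i}(j),C_{n,i'}(j'))$, $\Cov(C_{n,i}(j),D_{n,i'}(j'))$, $\Cov(D_{n,i}(j),C_{n,i'}(j'))$ and $\Cov(D_{n,i}(j),D_{n,i'}(j'))$, each carrying the prefactor $k_n/(n-b_n+1)^2$. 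I will show that these four sums converge, respectively, to the four terms inside the brace of \eqref{eq:sigmasl}, each already carrying the outer factor $2\intne\,\cdots\,\di\xi$. As throughout this section, all block functionals may be assumed $\Bc^{\eps_1}_{\cdot:\cdot}$-measurable after the customary truncation on an event of probability tending to one.

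First I would handle the $\Cov(C,C)$-sum. Since $\frac{\sqrt{k_n}}{n-b_n+1}\sum_{i}C_{n,i}(j)=\intnu e_{n,j}^\slb(\tau)\,\di H(\tau)$, this sum equals $\intnu\intnu\Cov(e_{n,j}^\slb(\tau),e_{n,j'}^\slb(\tau'))\,\di H(\tau)\,\di H(\tau')$. By Lemma~\ref{Sliding_Cov} the integrand converges pointwise to $2\intne\Cov(\I(X_{1,\xi}^{(\tau)}=j),\I(Y_{1,\xi}^{(\tau')}=j'))\,\di\xi$ -- the joint mass function $H_{j,j'}^{(\tau,\tau')}$ appearing there being exactly that of $(X_{1,\xi}^{(\tau)},Y_{1,\xi}^{(\tau')})$ -- and the region $\tau>\tau'$ is covered by the symmetry $\Cov(e_{n,j}^\slb(\tau),e_{n,j'}^\slb(\tau'))=\Cov(e_{n,j'}^\slb(\tau'),e_{n,j}^\slb(\tau))$. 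A uniform bound $\sup_n\sup_{\tau,\tau'}|\Cov(e_{n,j}^\slb(\tau),e_{n,j'}^\slb(\tau'))|<\infty$, obtained by estimating each $\Cov(\I,\I)$-summand by $\tfrac14$ for overlapping blocks and by $4\alpha_{\eps_1}$ for disjoint ones via Lemma~3.9 in \cite{DehPhi02} and using $k_nb_n\asymp n$ together with Condition~\ref{cond}(ii), then licenses dominated convergence, and an application of Fubini's theorem identifies the limit with the first term of \eqref{eq:sigmasl}.

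For the remaining three double sums I would follow the scheme of the proof of Lemma~\ref{Sliding_Cov} almost verbatim. Decompose each sum according to the lag $h=i'-i$. For $|h|\ge b_n$ the blocks $I_i^\slb$ and $I_{i+h}^\slb$ are disjoint and, by Lemma~3.9 in \cite{DehPhi02} together with $\sum_\ell\alpha_{\eps_1}(\ell)<\infty$ and $k_n=\lfloor n/b_n\rfloor=o(n)$ (Condition~\ref{cond}(ii)), the total contribution of these lags is $O(k_n/n)=o(1)$. For $|h|=\lfloor\xi b_n\rfloor$ with $\xi\in(0,1)$, the two sliding blocks overlap in a window of length $\approx(1-\xi)b_n$ flanked by two windows of length $\approx\xi b_n$; by the inserting-small-gaps manipulation of \eqref{phi_n}--\eqref{manipulation} (splitting each exceedance count and each block maximum along the three windows, at the cost of $O(\alpha_{\eps_1}(q_n))+O(q_n/b_n)$ errors for a suitable $q_n\to\infty$ with $q_n=o(b_n)$), combined with Condition~\ref{cond}(i), \eqref{z_conv} and the continuity property $\varphi_{n,j}(x_n)\to p^{(x)}(j)$ (for $x_n\to x$) from the proof of Lemma~\ref{dj_weakconv}, the relevant lag-$h$ joint laws converge: $(Z_{ni}^\slb,Z_{n,i+h}^\slb)\wto(X_{2,\xi},Y_{2,\xi})$ and $(N_{b_n,i}^{(\tau),\slb},Z_{n,i+h}^\slb)\wto(X_{3,\xi}^{(\tau)},Y_{3,\xi})$, with precisely the laws displayed in Theorem~\ref{theo:slb}. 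Since all functionals are bounded by $1$, the corresponding lag-$h$ covariances converge to $\Cov(p^{(X_{2,\xi})}(j),p^{(Y_{2,\xi})}(j'))$ in the $\Cov(D,D)$-case, and, after pulling $\intnu\di H(\tau)$ outside by dominated convergence (same uniform bound), to $\intnu\Cov(\I(X_{3,\xi}^{(\tau)}=j),p^{(Y_{3,\xi})}(j'))\,\di H(\tau)$ and (after relabelling $i\leftrightarrow i'$) $\intnu\Cov(\I(X_{3,\xi}^{(\tau)}=j'),p^{(Y_{3,\xi})}(j))\,\di H(\tau)$ in the $\Cov(C,D)$- and $\Cov(D,C)$-cases. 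Counting $\approx n-b_n+1$ pairs with a given lag and using $k_n/(n-b_n+1)\approx 1/b_n$, each sum becomes a Riemann sum converging to $2\intne c(\xi)\,\di\xi$ for the respective integrand $c$; adding the four contributions gives $d_{j,j'}^\slb$.

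The main obstacle is the identification of the limiting joint laws $(X_{2,\xi},Y_{2,\xi})$ and $(X_{3,\xi}^{(\tau)},Y_{3,\xi})$: one must decompose each block maximum and each exceedance count along the overlap window and the two flanking windows, justify asymptotic independence of the three pieces through the gap-insertion and $\alpha_{\eps_1}$-mixing argument, and then match each factor with its correct limit -- $p^{(\cdot)}$ from Condition~\ref{cond}(i) for an exceedance count over a flanking window, $p_2^{(\cdot,\cdot)}$ for the two-level exceedance count over the overlap, and $e^{-\theta\cdot}$ from \eqref{z_conv} for a maximum over a flanking window. In the law of $(X_{3,\xi}^{(\tau)},Y_{3,\xi})$ this produces the case distinction $x\le\tau$ versus $x>\tau$, according to which of the two competing levels governs the maximum defining $Z^\slb$, and one must also verify that the limit is invariant under $h\mapsto-h$ by the mirror-image version of the window decomposition. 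The rest is bookkeeping in the spirit of the proofs of Lemmas~\ref{dj_weakconv} and \ref{Sliding_Cov}.
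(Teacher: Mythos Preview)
Your proposal is correct and follows essentially the same route as the paper. The paper's proof also splits $W_{n,i}^\slb(j)$ into the integral part and the $\varphi_{n,j}(Z_{ni}^\slb)$-part, arriving at the same four terms $C_{n1},\dots,C_{n4}$; for $C_{n1}$ it invokes Lemma~\ref{Sliding_Cov} just as you do, and for $C_{n2},C_{n3},C_{n4}$ it identifies the limiting joint laws of $(Z_{ns}^\slb,Z_{nt}^\slb)$ and $(N_{b_n,s}^{(\tau),\slb},Z_{nt}^\slb)$ via exactly the window-decomposition and gap-insertion argument you outline, including the case distinction $x\le\tau$ versus $x>\tau$ that you flag as the main obstacle. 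The only cosmetic difference is that the paper organizes the double sum by pairs of disjoint blocks $(I_1,I_1),(I_1,I_2),\dots$ (as in the proof of Lemma~\ref{Sliding_Cov}) rather than by the raw lag $h=i'-i$, but this amounts to the same Riemann-sum calculation.
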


\begin{proof}[Proof of Lemma \ref{Sliding_OtherCov}]	
Assume that all $U_s$ are $\mathcal{B}^{\varepsilon}_{s:s}$-measurable with $\eps=\eps_1$ from Condition \ref{cond}; the general case can be treated by multiplying with suitable indicator functions as in the previous proofs.
	Write 
		\begin{align}
		&\phantom{{}={}} \Cov \Big( \frac{\sqrt{k_n}}{n-b_n+1} \sum_{i=1}^{n-b_n+1} W_{n,i}^{\slb}(j), \frac{\sqrt{k_n}}{n-b_n+1} \sum_{i=1}^{n-b_n+1} W_{n,i}^{\slb}(j') \Big) \nonumber\\
		&= C_{n1} + C_{n2} + C_{n3} + C_{n4}, \label{covariances}
				\end{align}
				where
	\begin{align*}
		C_{n1} &=  
		\Cov \Big( \intnu e_{n,j}(\tau) \, \di H(\tau), \intnu e_{n,j'}(\tau) \, \di H(\tau) \Big)	\\
		C_{n2} &=  
		\frac{k_n}{(n-b_n+1)^2} \sum_{i,i'=1}^{n-b_n+1} \Cov \big( \varphi_{n,j}(Z_{ni}), \varphi_{n,j'}(Z_{ni'}) \big)\nonumber	\\
		C_{n3} &=  
		\frac{k_n}{(n-b_n+1)^2} \sum_{i,i'=1}^{n-b_n+1} \Cov \Big( \intnu \I (N_{b_n,i}^{(\tau)} = j ) \, \di H(\tau), \varphi_{n,j'}(Z_{ni'}) \Big) \nonumber \\
		C_{n4} &= 
		\frac{k_n}{(n-b_n+1)^2} \sum_{i,i'=1}^{n-b_n+1} \Cov \Big( \intnu \I (N_{b_n,i}^{(\tau)} = j' ) \, \di H(\tau), \varphi_{n,j}(Z_{ni'}) \Big). \nonumber			
	\end{align*}
	By Lemma \ref{Sliding_Cov}, the first term $C_{n1}$ satisfies
	\begin{multline} \label{eq:ccn1}
	\lim_{n\to\infty}C_{n1}
	 =   
	 2 \intne   \intnu \intnu   \Cov \big( \I(X_{1,\xi}^{(\tau)}=j), \\ 
		\I(Y_{1,\xi}^{(\tau')}=j') \big) \, \di H(\tau) \di H(\tau') 
	\end{multline}
	As at the beginning of the proof of Lemma \ref{Sliding_Cov}, the second term can be shown to satisfy $C_{n2}=T_{n1}+T_{n2}+o(1)$, where
	\begin{align*}
		T_{n1} &= \frac{1}{b_n^2} \sum_{s,t \in I_1} \Cov \big( \varphi_{n,j}(Z_{ns}), \varphi_{n,j'}(Z_{nt}) \big), \\
		T_{n2} &= \frac{1}{b_n^2} \sum_{s \in I_1} \sum_{t \in I_2} \Cov \big( \varphi_{n,j}(Z_{ns}), \varphi_{n,j'}(Z_{nt}) \big) \\
		&\hspace{3cm }+  \frac{1}{b_n^2} \sum_{s \in I_2} \sum_{t \in I_1} \Cov \big( \varphi_{n,j}(Z_{ns}), \varphi_{n,j'}(Z_{nt}) \big).
	\end{align*}
	Let us start with $T_{n1}$. We know that $\Exp[\varphi_{n,j'}(Z_{nt})] \to \bar{p}(j')$ by the proof of Lemma \ref{dj_weakconv}, which implies 
	\[ 
	T_{n1} = \frac{1}{b_n^2} \sum_{s,t=1}^{b_n} \Exp [ \varphi_{n,j}(Z_{ns}) \varphi_{n,j'}(Z_{nt}) ] - \bar{p}(j) \bar{p}(j') + o(1). 
	\]
	As in the proof of Lemma \ref{Sliding_Cov} we can write 
	\begin{align*}
		\frac{1}{b_n^2} \sum_{s,t=1}^{b_n} \Exp [ \varphi_{n,j}(Z_{ns}) \varphi_{n,j'}(Z_{nt}) ] &= \intne \intne g_n(\xi,z) \, \di z \di \xi, 
	\end{align*}
	where
	\begin{align*}
		g_n(\xi,z) &= \Exp [ \varphi_{n,j}(Z_{n, \lfloor b_nz \rfloor +1}) \varphi_{n,j'}(Z_{n, \lfloor b_n \xi \rfloor +1}) ].
	\end{align*}
	Let $z \leq \xi$. Set $r_n = \lfloor b_n \xi \rfloor - \lfloor b_n z \rfloor$. For $x,y >0$ consider
	\begin{align*}
		&\phantom{{}={}} 
		\Pro \big( Z_{n, \lfloor b_nz \rfloor +1} > x, Z_{n, \lfloor b_n \xi \rfloor +1}> y \big) \\
		&=  
		\Pro \big( N_{1:b_n} < 1-\tfrac{x}{b_n}, \ N_{r_n+1:r_n+b_n} < 1-\tfrac{y}{b_n} \big) \\
		&=  
		\Pro \big( N_{1:r_n} < 1-\tfrac{x}{b_n}, \ N_{r_n+1:b_n} < 1- \tfrac{x \vee y}{b_n}, \ N_{b_n+1:r_n+b_n} < 1-\tfrac{y}{b_n} \big)
	\end{align*}
	where $N_{s:t} = \max(U_s, \dots, U_t)$ for  $s, t \in \N_{\ge 1}$ with $s\le t$.
	Note that $\Pro(N_{1:q_n}>1-z/b_n) \leq zq_n/b_n \to 0$ for any integer sequence $q_n=o(b_n)$ that  is converging to infinity. Similar as in the step (\ref{manipulation}) in the proof of Lemma \ref{Sliding_Cov}, this implies that the expression in the previous display equals
	\begin{multline*} 
	\Pro \big( N_{1:r_n} < 1-\tfrac{x}{b_n}\big) \ \Pro \big( N_{r_n+1:b_n} < 1-  \tfrac{x \vee y}{b_n} \big) \ \Pro \big( N_{b_n+1:r_n+b_n} < 1-\tfrac{y}{b_n} \big) \\
	 +  O(\alpha_{\eps}(q_n)) + O((x \vee y)q_n/b_n),
	\end{multline*}
	which by (\ref{z_conv}) converges to 
	\begin{align*}
		H_{\xi-z}(x,y) := \exp \big( -\theta \{ (x \wedge y) (\xi-z) + (x \vee y) \} \big).
	\end{align*}
	As a consequence, by the definition of $(X_{2,\xi-z},  Y_{2,\xi-z})$ in Theorem~\ref{theo:slb}, 
\[ 
(Z_{n,\lfloor b_n z \rfloor +1}^{\slb}, Z_{n,\lfloor b_n \xi \rfloor +1}^{\slb}) \wto (X_{2,\xi-z},Y_{2,\xi-z}),
\]
As in the proof of Lemma \ref{dj_weakconv}, the extended continuous mapping theorem and Theorem 2.20 in \cite{Van98} imply 
\[
\lim_{n\to\infty} g_n(z,\xi) = \Exp[p^{(X_{2,\xi-z})}(j) p^{(Y_{2,\xi-z})}(j')]
\] 
for $z \leq \xi$. By symmetry, for $z > \xi$,
\[
\lim_{n\to\infty} g_n(z,\xi) = \Exp[p^{(X_{2,z-\xi})}(j) p^{(Y_{2,z-\xi})}(j')]
\] 
A simple calculation then shows that 
\[ 
\lim_{n\to\infty} \intne \intne g_n(\xi,z) \, \di z \di \xi 
=
2 \intne (1-\xi) \Exp [p^{(X_{2,\xi})}(j) p^{(Y_{2,\xi})}(j')] \, \di \xi,  
\] 
Altogether, we have that
\[ 
\lim_{n\to\infty} T_{n1} =
2 \intne (1-\xi) \Exp \big[ p^{(X_{2,\xi})}(j) p^{(Y_{2,\xi})}(j') \big] \, \di \xi - \bar{p}(j) \bar{p}(j'). 
\] 
Analogously, the term $T_{n2}$ can be seen to satisfy
\[ 
\lim_{n\to\infty} T_{n2} 
= 
2 \intne \xi \Exp \big[ p^{(X_{2,\xi})}(j) p^{(Y_{2,\xi})}(j') \big] \, \di \xi - \bar{p}(j) \bar{p}(j'), 
\] 
such that 
\begin{align} 
	C_{n2} = T_{n1} + T_{n2} +o(1) 
& \nonumber
	\to 2 \intne \! \Exp \big[ p^{(X_{2,\xi})}(j) p^{(Y_{2,\xi})}(j') \big] \, \di \xi - 2 \bar{p}(j) \bar{p}(j') \\
&= \label{eq:ccn2}
	2 \intne  \Cov \big( p^{(X_{2, \xi})}(j), p^{(Y_{2, \xi})}(j') \big) \, \di \xi.
\end{align}

	Next, consider $C_{n3}$ in (\ref{covariances}), which may be written as
$
	C_{n3} = S_{n1} + S_{n2} + o(1),
$
	where
\begin{align*}
	S_{n1} 
&= 
	\frac{1}{b_n^2} \sum_{s,t \in I_1} \Cov \bigg( \intnu \I(N_{b_n,s}^{(\tau)}=j) \, \di H(\tau), \varphi_{n,j'}(Z_{nt}) \bigg) \\
	S_{n2}
&=  
	\frac{1}{b_n^2} \bigg\{ \sum_{s \in I_1} \sum_{t \in I_2} \Cov \bigg( \intnu \I(N_{b_n,s}^{(\tau)}=j) \, \di H(\tau), \varphi_{n,j'}(Z_{nt}) \bigg) \\
	&  \hspace{1.5cm} + \sum_{s \in I_2} \sum_{t \in I_1} \Cov \bigg( \intnu \I(N_{b_n,s}^{(\tau)}=j) \, \di H(\tau), \varphi_{n,j'}(Z_{nt}) \bigg) \bigg\}.
	\end{align*}
By similar arguments as before, we obtain
	\begin{multline*} 
	S_{n1}  
	= 
	\intne \intne  \intnu \Exp \Big[ \I\big( N_{b_n,\lfloor b_n z \rfloor +1}^{(\tau)} = j \big) \varphi_{n,j'}(Z_{n,\lfloor b_n \xi \rfloor+1}) \Big]  \, \di H(\tau) \, \di z \, \di \xi \\
	- \bar{p}(j)\bar{p}(j') + o(1). 
	\end{multline*} 

	To analyze the convergence of the product moment in the previous display we start by showing that  	
	\[ 
	\big( N_{b_n,\lfloor b_n z \rfloor +1}^{(\tau)}, Z_{n,\lfloor b_n \xi \rfloor+1} \big) 
	\wto 
	\big( X_{3,|\xi-z|}^{(\tau)}, Y_{3,|\xi-z|} )
	\] 
	where $(X_{3,\zeta}^{(\tau)}, Y_{3,\zeta})$ is defined in Theorem~\ref{theo:slb}. For $x>0, j \in \N_{\ge 0}$ and $0\le z \leq \xi \le 1$, write
	\begin{align*}
	&\phantom{{}={}} 
	\Pro \big(N_{b_n,\lfloor b_n z \rfloor +1}^{(\tau)}= j, Z_{n,\lfloor b_n \xi \rfloor+1} > x \big) 
	=  
	\Pro \big(N_{b_n,\lfloor b_n z \rfloor +1}^{(\tau)}= j, N_{b_n,\lfloor b_n \xi \rfloor +1}^{(x)}
	=0 \big)
	\end{align*}
	which is exactly of the form of $\varphi_n$ in \eqref{eq:defphi} and hence converges to 
	\begin{align*}
&\phantom{{}={}} H_{0,j}^{(x,\tau)}(\xi-z)\I(x \le \tau)  + 	H_{j,0}^{(\tau, x)}(\xi-z) \I(x > \tau) \\
	& =
	\sum_{l=0}^j p^{(\tau(\xi-z))}(l) p^{(x (\xi-z))}(0) p_2^{((1-\xi+z)\tau, (1-\xi+z)x)}(j-l,0)  \I(x \leq \tau) \\
	& \hspace{3cm} + p^{(\tau(\xi-z))}(j) e^{-\theta x} \I(x > \tau)\\
	&=
	\Pro \big( X_{3,\xi-z}^{(\tau)} =j, Y_{3,\xi-z} >x \big)  
	\end{align*}
	by the proof of Lemma \ref{Sliding_Cov}, where the last equation follows from the definition of   $(X_{3,\zeta}^{\scs (\tau)}, Y_{3,\zeta})$ in Theorem~\ref{theo:slb}. 
 The same arguments as before in combination with the dominated convergence theorem implies
	\begin{multline*}  
	\lim_{n\to\infty} 
	\Exp \Big[ \I\big( N_{b_n,\lfloor b_n z \rfloor +1}^{(\tau)} = j \big) \varphi_{n,j'}(Z_{n,\lfloor b_n \xi \rfloor+1}) \Big] \\                                     
	  = \Exp \Big[ \I \big( X_{3,\xi-z}^{(\tau)} = j \big) p^{(Y_{3,\xi-z})}(j') \Big].
	\end{multline*} 
	For the case $z > \xi$, one obtains the same limiting expression, but with $z$ and $\xi$ interchanged. As a consequence, by similar arguments as for $C_{n2}$, 
	\begin{align*}
	&\phantom{{}={}} \lim_{n\to\infty} S_{n1}  \\
	&= 
	\intne \intne  \intnu \Exp \Big[ \I \big( X_{3,\xi-z}^{(\tau)} = j \big) p^{(Y_{3,\xi-z})}(j') \Big]   \, \di H(\tau) \, \di z \, \di \xi - \bar{p}(j)\bar{p}(j') \\
	&=
	 2 \intne \int_0^\infty (1-\xi) \Exp \Big[\I\big( X_{3\xi}^{(\tau)} = j \big) p^{(Y_{3\xi})}(j') \Big]  \, \di H(\tau) \, \di \xi- \bar{p}(j) \bar{p}(j').
	\end{align*} 
	A similar argumentation for $S_{n2}$ finally implies
	\begin{align}
	C_{n3} 
	&= \nonumber
	S_{n1}+S_{n2}  + o(1) \\
	&\to  \nonumber
	2 \intne \int_0^\infty  \Exp \Big[\I\big( X_{3\xi}^{(\tau)} = j \big) p^{(Y_{3\xi})}(j') \Big]  \, \di H(\tau) \, \di \xi- \bar{p}(j) \bar{p}(j') \\
	&= \label{eq:ccn3}
	2 \intne \intnu \Cov \Big( \I\big( X_{3,\xi}^{(\tau)} = j \big), p^{(Y_{3,\xi})}(j') \Big)  \, \di H(\tau) \, \di \xi, 
	\end{align} 
	where we have used that $X_{3,\xi} \sim p^{(\tau)}$ and $Y_{3,\xi} \sim \mathrm{Exponential}(\theta)$. 
	The assertion is a consequence of (\ref{covariances}) and \eqref{eq:ccn1}, \eqref{eq:ccn2}, \eqref{eq:ccn3}, and the fact that $C_{n4}$ has the same limit as $C_{n3}$, but with interchanged roles of $j$ and $j'$.
\end{proof}


\section{Further auxiliary results}

\begin{lem}[\citealp{Bra83}] \label{Bradley}
	If $X$ and $Y$ are two random variables in some Borel space $S$ and $\R$, respectively, if $U$ is uniform on $[0,1]$ and independent of $(X,Y)$ and if $q>0$ and $\gamma >0$ are such that $q \leq ||Y||_{\gamma} = \Exp[|Y|^{\gamma}]^{1/\gamma}$, then there exists a measurable function $f$ such that $Y^{\ast}=f(X,Y,U)$ has the same distribution as $Y$, is independent of $X$ and satisfies \[ \Pro (|Y-Y^{\ast}| \geq q) \leq 18 (||Y||_{\gamma}/q)^{\gamma/(2\gamma +1)} \alpha(\sigma(X),\sigma(Y))^{2\gamma/ (2\gamma +1)}. \]
\end{lem}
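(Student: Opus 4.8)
The statement is the classical Bradley coupling lemma, so the plan is to reconstruct its proof by the usual three-step route: reduce to a finitely supported $Y$ by truncation and discretisation; settle that finite case by a maximal coupling carried out \emph{inside the fibres of $X$}, with the auxiliary uniform $U$ used only to make the construction measurable; and finally transfer back and balance the free parameters to recover the stated exponents and the constant $18$.

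\textbf{The finite-valued core.} First I would prove the model case: if $\bar Y$ takes values in a finite set $\{1,\dots,N\}$, then there is $\bar Y^{\ast}=g(X,\bar Y,U)$ with $\bar Y^{\ast}\stackrel{d}{=}\bar Y$, $\bar Y^{\ast}$ independent of $X$, and $\Prob(\bar Y\ne\bar Y^{\ast})$ controlled by $\alpha:=\alpha(\sigma(X),\sigma(\bar Y))\le\alpha(\sigma(X),\sigma(Y))$. Since $S$ is Borel (hence standard Borel) the regular conditional laws $\nu_x(\cdot)=\Prob(\bar Y\in\cdot\mid X=x)$ exist; writing $\mu$ for the law of $\bar Y$, I would realise, measurably in $(x,\cdot)$ and driven by $U$, the \emph{maximal coupling} of $\mu$ and $\nu_x$ (keep $\bar Y^{\ast}=\bar Y$ on an event of conditional probability $\sum_i\min\{\mu(i),\nu_x(i)\}=1-\mathrm{TV}(\mu,\nu_x)$, and otherwise resample $\bar Y^{\ast}$ from the normalised positive part of $\mu-\nu_x$, both steps being explicit quantile transforms of $U$, which is independent of $(X,\bar Y)$). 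This produces $\bar Y^{\ast}\stackrel{d}{=}\bar Y$, $\bar Y^{\ast}$ independent of $X$, and
\[
\Prob(\bar Y\ne\bar Y^{\ast})=\Exp\bigl[\mathrm{TV}(\mu,\nu_X)\bigr]=\tfrac12\sum_{i=1}^{N}\Exp\bigl[\,\bigl|\Prob(\bar Y=i\mid X)-\Prob(\bar Y=i)\bigr|\,\bigr].
\]
For fixed $i$, taking the sign set $\{\Prob(\bar Y=i\mid X)>\Prob(\bar Y=i)\}\in\sigma(X)$ shows that the $i$-th term is at most $2\alpha$; the $N$ terms are then to be recombined by a Cauchy--Schwarz step in the index $i$ rather than a crude union bound, which is what yields the $\sqrt N$-type dependence on the number of atoms needed downstream.

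\textbf{Reduction and optimisation.} For general real $Y$ I would fix a truncation level $T>0$, note $\Prob(|Y|>T)\le(\|Y\|_\gamma/T)^{\gamma}$ by Markov's inequality applied to $|Y|^\gamma$, partition $[-T,T]$ into $N$ intervals of length $<q$ (so $N\lesssim T/q$), and set $\bar Y$ to be the midpoint of the interval containing $Y$ when $|Y|\le T$ and a junk symbol otherwise. Applying the finite case to $\bar Y$, and then on each genuine interval resampling $Y^{\ast}$ from the conditional law of $Y$ given $\bar Y$ using fresh randomness extracted from $U$, one obtains $Y^{\ast}\stackrel{d}{=}Y$, $Y^{\ast}$ independent of $X$, and $\{|Y-Y^{\ast}|\ge q\}\subseteq\{\bar Y\ne\bar Y^{\ast}\}\cup\{\bar Y\text{ or }\bar Y^{\ast}\text{ is junk}\}$, because a genuine interval has length $<q$. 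Hence $\Prob(|Y-Y^{\ast}|\ge q)\lesssim\sqrt{T/q}\,\alpha+(\|Y\|_\gamma/T)^{\gamma}$, and balancing the two terms in $T$ gives a bound of order $(\|Y\|_\gamma/q)^{\gamma/(2\gamma+1)}\alpha^{2\gamma/(2\gamma+1)}$; the hypothesis $q\le\|Y\|_\gamma$ is exactly what keeps the optimising $T$ in the admissible range, and carrying all numerical constants through the maximal-coupling estimate and this optimisation is what fixes the universal factor at $18$.

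The hard part will be the two places where one has to be careful rather than formal: first, the \emph{measurable} realisation of the whole family $\{\text{maximal coupling of }\mu\text{ with }\nu_x\}_x$ as a single function $g(X,\bar Y,U)$, which is what forces the Borel-space hypothesis and relies on disintegration, regular conditional probabilities, and the standard transfer/quantile-inversion argument; and second, the bookkeeping that extracts precisely the exponents $\gamma/(2\gamma+1)$ and $2\gamma/(2\gamma+1)$ and the constant $18$, where the only genuinely probabilistic input is the elementary inequality $\Exp[\,|\Prob(B\mid\sigma(X))-\Prob(B)|\,]\le 2\alpha(\sigma(X),\sigma(Y))$ for $B\in\sigma(Y)$ and everything else is discretisation and optimisation.
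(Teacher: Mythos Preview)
The paper does not prove this lemma. It is stated in the appendix on auxiliary results as a citation of \cite{Bra83} and is used as a black box (in the proofs of Lemmas~\ref{dj_ejn_integral} and~\ref{dj_weakconv}) without any argument supplied. There is therefore no ``paper's own proof'' to compare your attempt against.

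That said, your outline follows the standard route of Bradley's original argument (discretise, couple fibrewise, optimise), and the two places you flag as delicate---the measurable realisation of the fibrewise maximal coupling and the bookkeeping for the exponents---are indeed where the work lies. One point deserves more care than you give it: the finite-valued step has to produce a bound of order $\sqrt{N}\,\alpha$ (not $N\alpha$ or $\sqrt{N\alpha}$) in order for the subsequent optimisation in $T$ to land on the exponents $\gamma/(2\gamma+1)$ and $2\gamma/(2\gamma+1)$. Your description (``each term is at most $2\alpha$; recombine by Cauchy--Schwarz in $i$'') reads as if you intend to bound $\sum_i \Exp|p_i(X)-p_i|$ via Cauchy--Schwarz after using the crude per-term bound $2\alpha$, but that only yields $N\alpha$ or worse; obtaining $\sqrt{N}\,\alpha$ requires the sharper observation that $\sum_i \Exp[(p_i(X)-p_i)^2]$ can itself be controlled by a single $\alpha$ (via the identity $\Exp[(p_i(X)-p_i)^2]=\Cov(p_i(X),\ind_{A_i})$ and the fact that the resulting sum is $\Exp[\phi(X,Y)]$ for a function bounded by $1$), or alternatively Bradley's original construction which does not pass through the maximal coupling in quite this form. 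If you fill that step in correctly the rest of your sketch goes through.
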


\begin{lem}[\citealp{Ber79}] 
	If $X$ and $Y$ are two random variables in some Borel spaces $S_1$ and $S_2$, respectively, then there exists a random variable $U$ independent of $(X,Y)$ and a measurable function $f$ such that $Y^{\ast} = f(X,Y,U)$ has the same distribution as $Y$, is independent of $X$ and satisfies $\Pro(Y \neq Y^{\ast}) = \beta(\sigma(X),\sigma(Y))$. 
\end{lem}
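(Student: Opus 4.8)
The final statement is Berbee's classical coupling lemma, and the plan is to prove it by the maximal‑coupling construction applied to regular conditional distributions; the standing hypothesis that $S_1,S_2$ are Borel spaces is exactly what makes the required disintegrations and measurable selections available.

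First I would fix notation: let $P_Y$ denote the law of $Y$ on $S_2$ and, using that $S_2$ is Borel, pick a regular conditional distribution $Q_x(\cdot)=\Pro(Y\in\cdot\mid X=x)$. Matching the definition of $\beta$ in Section~\ref{sec:cond} with the total‑variation norm of a signed measure and disintegrating over the law of $X$ gives the identity $\beta(\sigma(X),\sigma(Y))=\|\Pro_{(X,Y)}-\Pro_X\otimes P_Y\|=\Exp[\beta(X)]$, where $\beta(x):=\sup_A|Q_x(A)-P_Y(A)|=1-(Q_x\wedge P_Y)(S_2)$ and $Q_x\wedge P_Y:=Q_x-(Q_x-P_Y)_+$ is the overlap measure from the Jordan decomposition. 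Next I would record, by a standard measurable‑selection argument, that $x\mapsto Q_x\wedge P_Y$ and the residual probability kernels $R^-_x:=(Q_x-Q_x\wedge P_Y)/\beta(x)$, $R^+_x:=(P_Y-Q_x\wedge P_Y)/\beta(x)$ (defined when $\beta(x)>0$, arbitrary otherwise) can be taken to be kernels, that the Radon--Nikodym density $r(x,y):=\tfrac{d(Q_x\wedge P_Y)}{dQ_x}(y)\in[0,1]$ is jointly measurable, and that $R^-_x$ and $R^+_x$ are carried by disjoint Hahn sets $D_x$ and $D_x^c$.

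Then I would build the coupling: split $U$ into two independent uniforms $U_1,U_2$ (a measurable operation on $[0,1]$), and, given $(X,Y)=(x,y)$, set $Y^\ast:=y$ if $U_1\le r(x,y)$ (``accept'') and $Y^\ast:=\psi_x(U_2)$ otherwise (``reject''), where $(x,u)\mapsto\psi_x(u)$ is a jointly measurable map pushing Lebesgue measure forward to $R^+_x$ (available because $S_2$ is Borel). This yields $Y^\ast=f(X,Y,U)$ with $U$ independent of $(X,Y)$. The three properties follow by direct computation: for Borel $A\subseteq S_2$,
\[
\Pro(Y^\ast\in A\mid X=x)=\int_A r(x,\cdot)\,dQ_x+\beta(x)R^+_x(A)=(Q_x\wedge P_Y)(A)+(P_Y-Q_x\wedge P_Y)(A)=P_Y(A),
\]
which does not depend on $x$, so $Y^\ast$ is independent of $X$ with $Y^\ast\stackrel{d}{=}Y$; moreover $\Pro(\text{accept}\mid X=x)=\int r(x,\cdot)\,dQ_x=1-\beta(x)$, and on the ``reject'' event the conditional laws of $Y$ and $Y^\ast$ are $R^-_x$ and $R^+_x$, supported on the disjoint sets $D_x$ and $D_x^c$, so $\Pro(Y=Y^\ast)=\Exp[1-\beta(X)]=1-\beta(\sigma(X),\sigma(Y))$, i.e. $\Pro(Y\ne Y^\ast)=\beta(\sigma(X),\sigma(Y))$. (Alternatively, the equality follows by combining the lower bound $\Pro(Y=Y^\ast)\ge\Pro(\text{accept})=1-\beta(\sigma(X),\sigma(Y))$ with the a priori bound $\beta(\sigma(X),\sigma(Y))=\|\Pro_{(X,Y)}-\Pro_{(X,Y^\ast)}\|\le\Pro(Y\ne Y^\ast)$, valid for any coupling with $Y^\ast\perp X$ and $Y^\ast\stackrel{d}{=}Y$.)

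The main obstacle is not the probabilistic idea, which is a one‑line maximal coupling, but the measure‑theoretic bookkeeping: that the regular conditional distribution $Q_x$, the overlap kernel $Q_x\wedge P_Y$, the density $r(x,y)$, the Hahn sets $D_x$, and the inverse‑c.d.f.\ type map $\psi_x$ can all be chosen measurably in $x$. This is precisely the point where the Borel‑space hypothesis is used, and making it rigorous is the bulk of the work, the remainder being the short computation above. Since this is a classical result, in practice one simply cites \cite{Ber79} (cf.\ \cite{Bra83}), exactly as is done for Lemma~\ref{Bradley}, and omits the proof.
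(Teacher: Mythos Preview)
Your proof sketch is correct and is the standard maximal-coupling argument for Berbee's lemma; the paper, however, does not prove this statement at all---it is stated without proof as a citation of \cite{Ber79}, just as Lemma~\ref{Bradley} is stated without proof as a citation of \cite{Bra83}. You yourself anticipate this in your final paragraph, so there is nothing to compare: your write-up supplies a proof where the paper deliberately omits one.
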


\section{Further simulation results} \label{more_figures}

This section contains simulation results for the ARMAX- and AR-model described in Section~\ref{sec:sim}, see Figure~\ref{Fig:maxAR_Var}-\ref{Fig:AR_MSE_all}.

\begin{figure} [t!]
	\begin{center}
		\includegraphics[width=\textwidth]{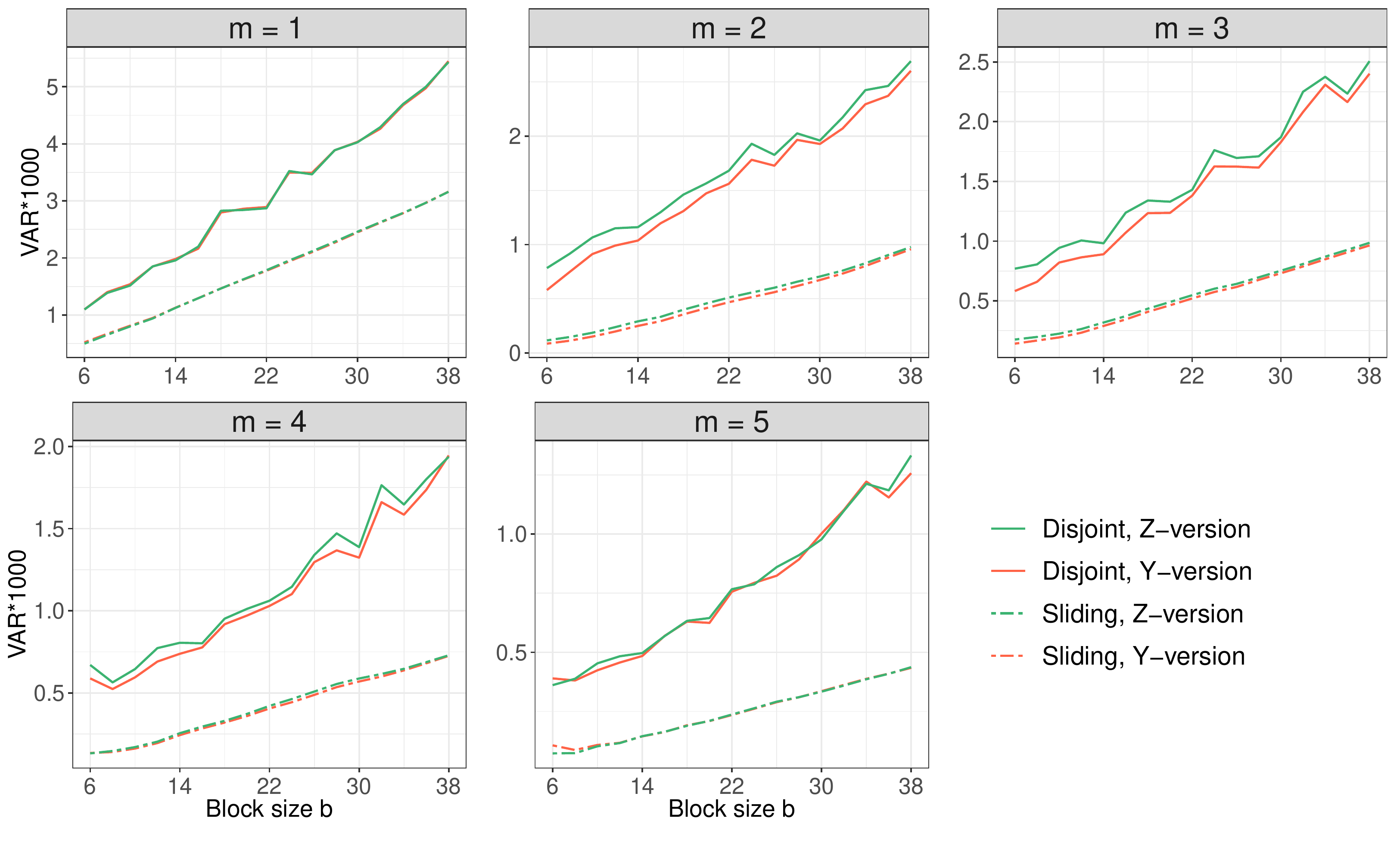} \vspace{-.8cm}
		\caption{Variance multiplied by $10^3$ for the estimation of $\pi(m)$ within the ARMAX-model for $m=1,\ldots,5$.} 
		\vspace{-.3cm} 
		\label{Fig:maxAR_Var}
	\end{center}
\end{figure}

\begin{figure} [t!]
	\begin{center}
		\includegraphics[width=\textwidth]{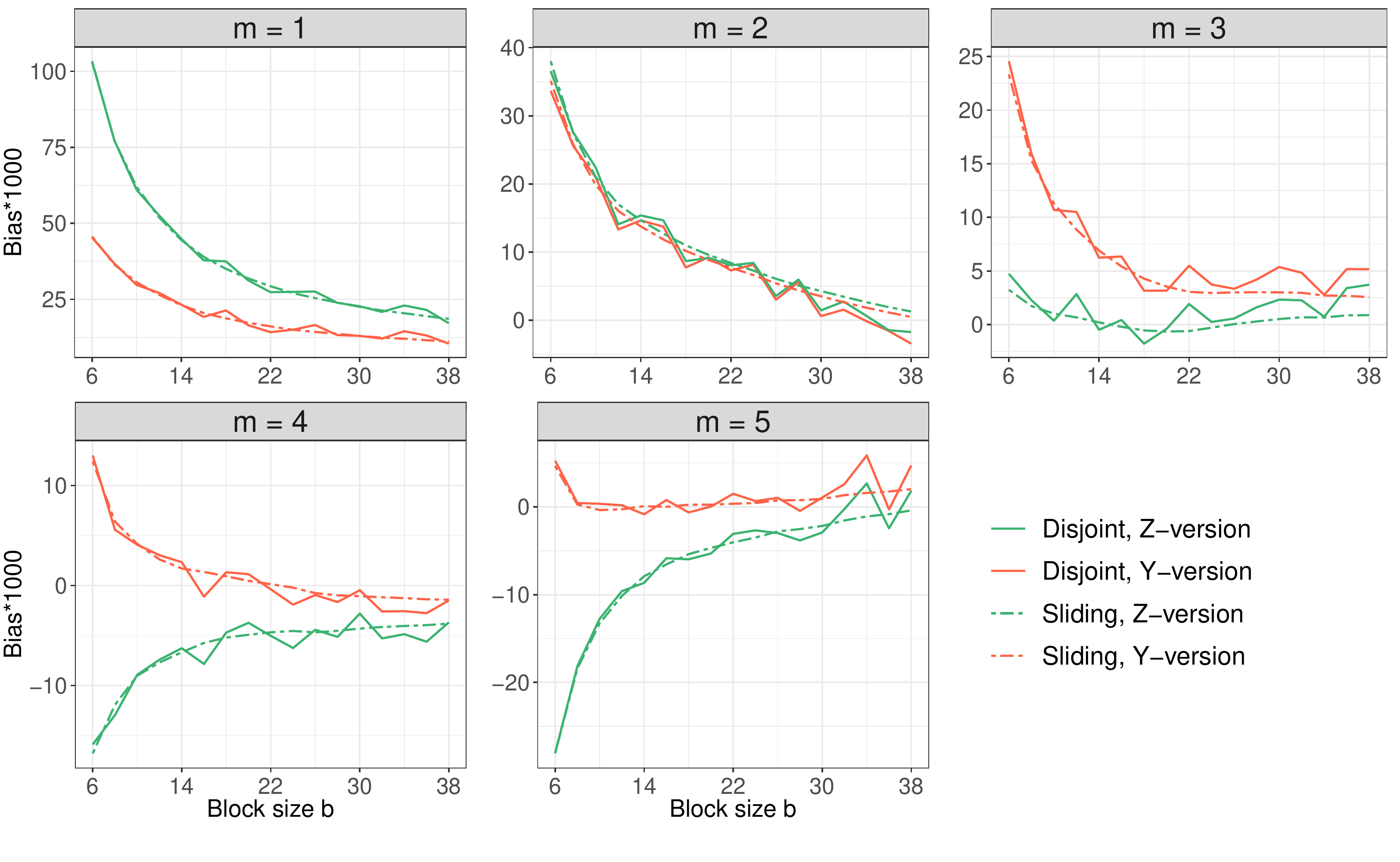} \vspace{-.8cm}
		\caption{Bias multiplied by $10^3$ for the estimation of $\pi(m)$ within the ARMAX-model for $m=1,\ldots,5$.} 
		\vspace{-.3cm} 
	\end{center}
\end{figure}

\begin{figure} [t!]
	\begin{center}
		\includegraphics[width=\textwidth]{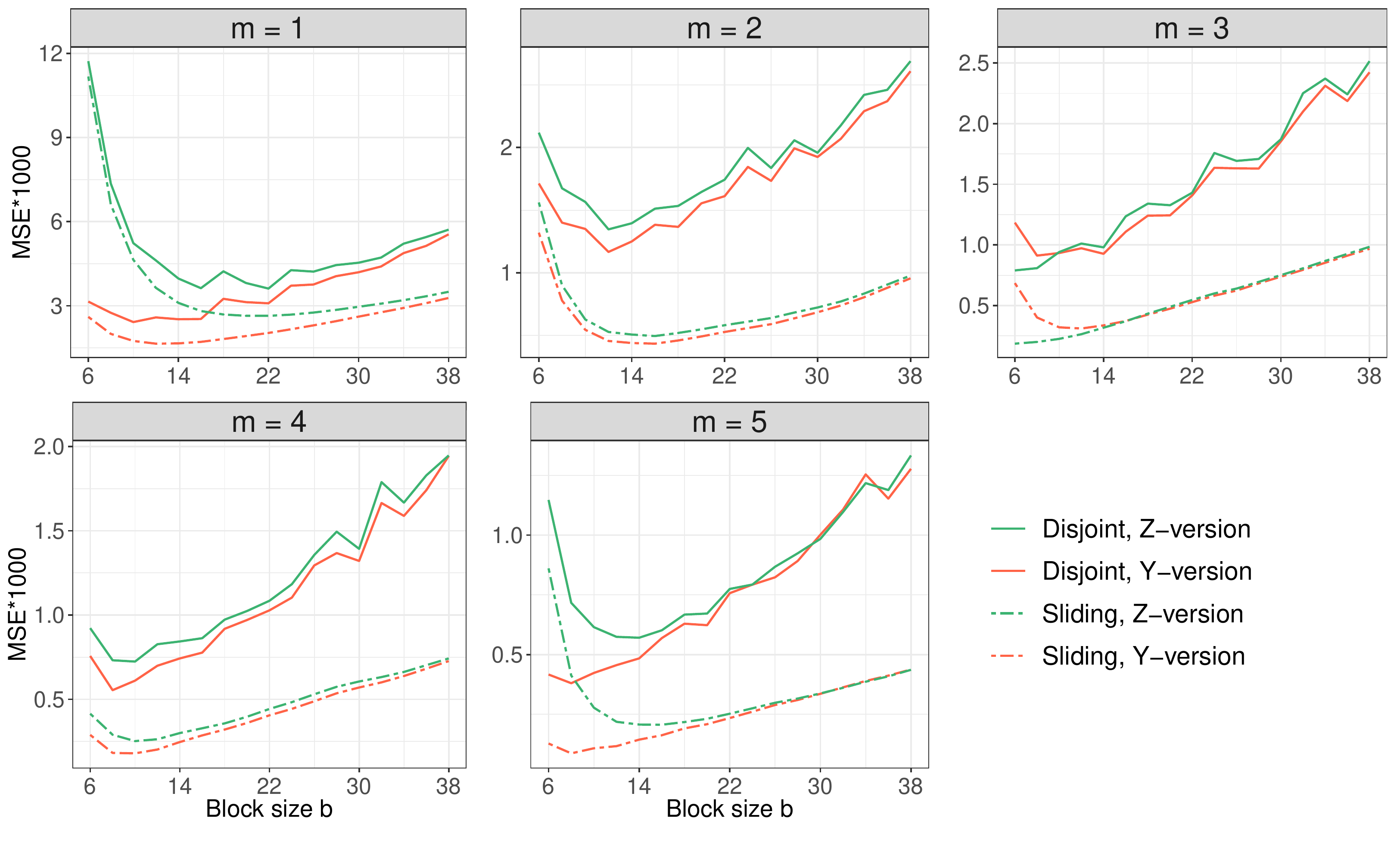} \vspace{-.8cm}
		\caption{Mean squared error multiplied by $10^3$ for the estimation of $\pi(m)$ within the ARMAX-model for $m=1,\ldots,5$.} 
		\vspace{-.3cm} 
	\end{center}
\end{figure}

\begin{figure} [t!]
	\begin{center}
		\includegraphics[width=\textwidth]{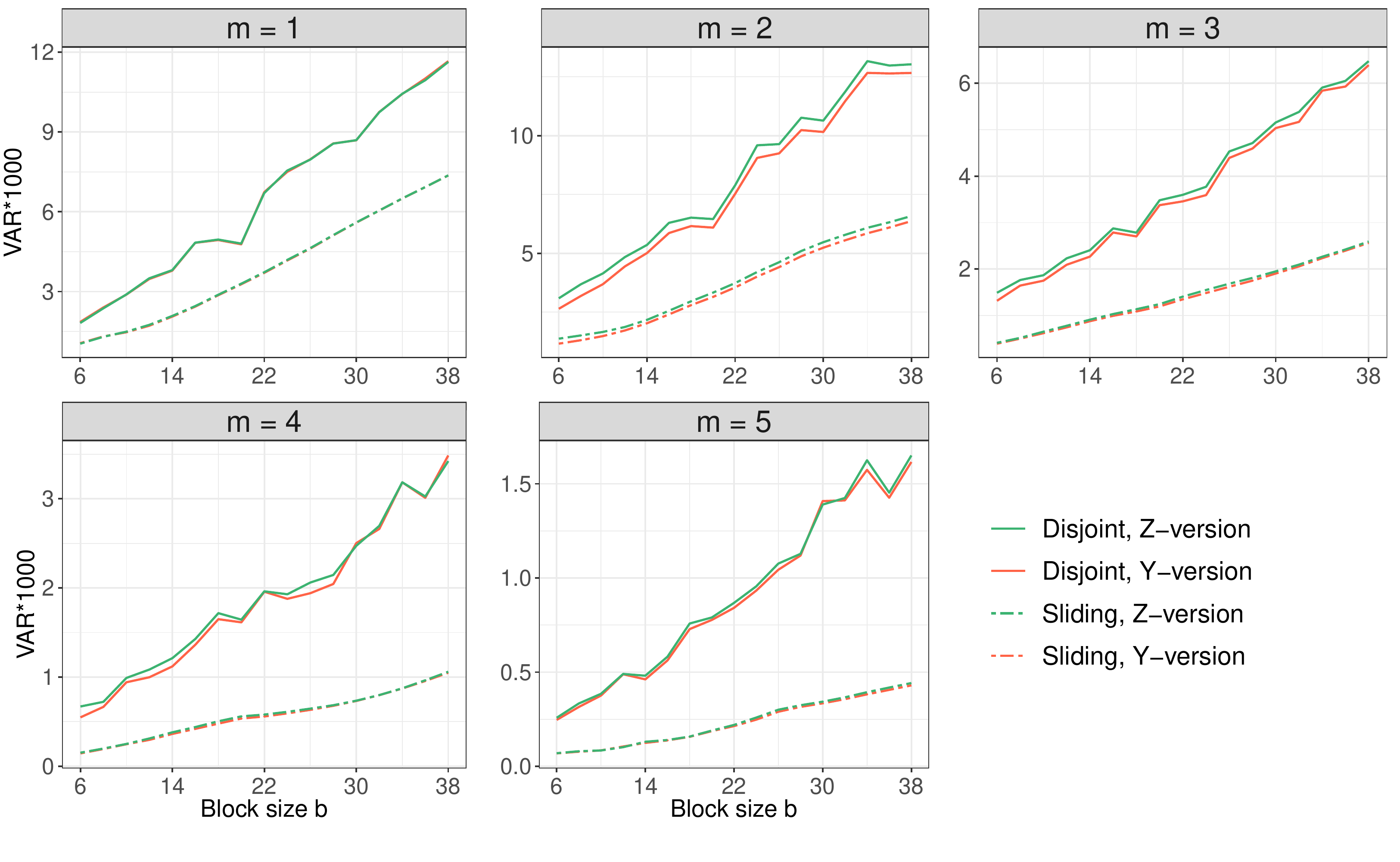} \vspace{-.8cm}
		\caption{Variance multiplied by $10^3$ for the estimation of $\pi(m)$ within the AR-model for $m=1,\ldots,5$.} 
		\vspace{-.3cm} 
	\end{center}
\end{figure}

\begin{figure} [t!]
	\begin{center}
		\includegraphics[width=\textwidth]{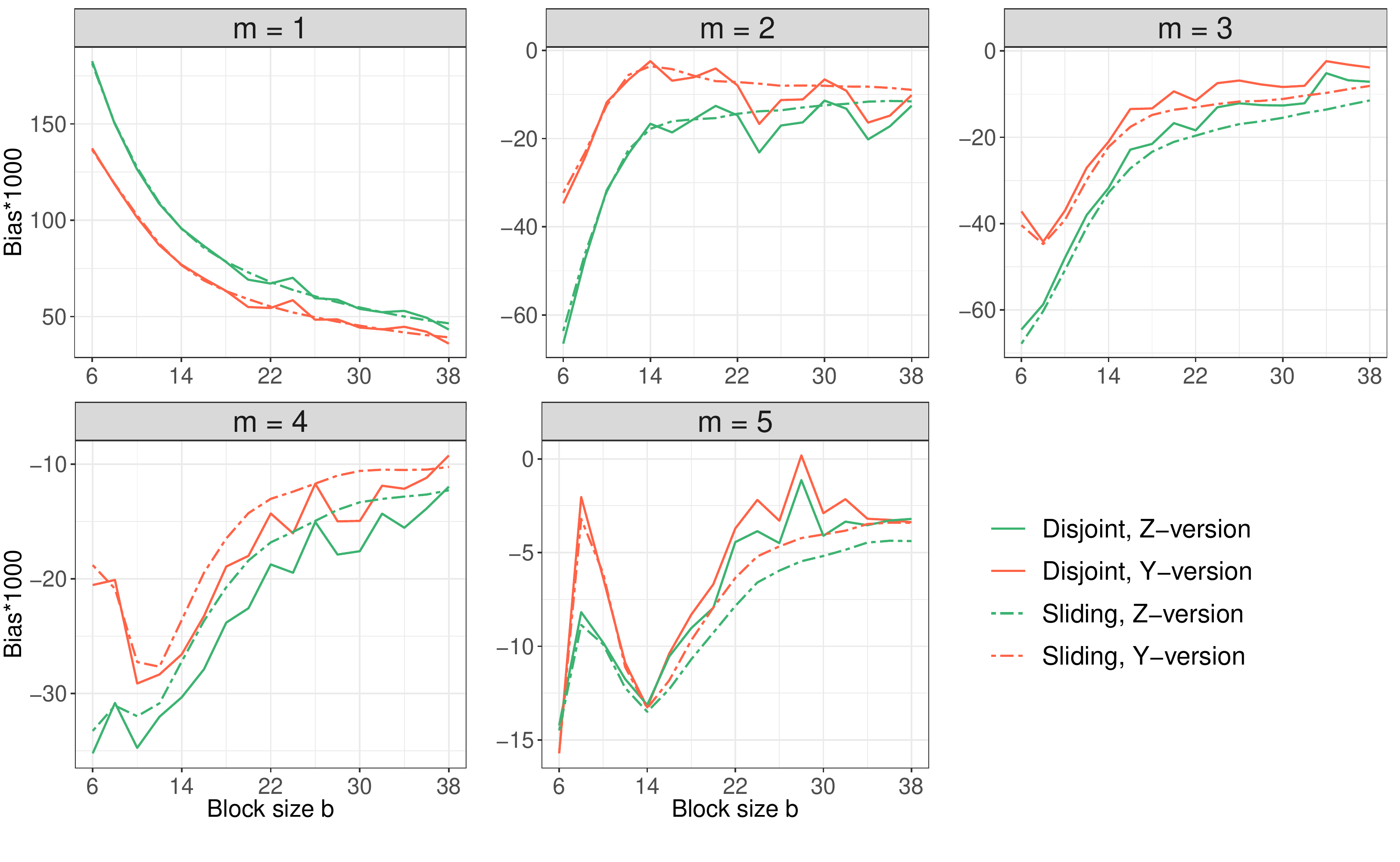} \vspace{-.8cm}
		\caption{Bias multiplied by $10^3$ for the estimation of $\pi(m)$ within the AR-model for $m=1,\ldots,5$.} 
		\vspace{-.3cm} 
	\end{center}
\end{figure}

\begin{figure} [t!]
	\begin{center}
		\includegraphics[width=\textwidth]{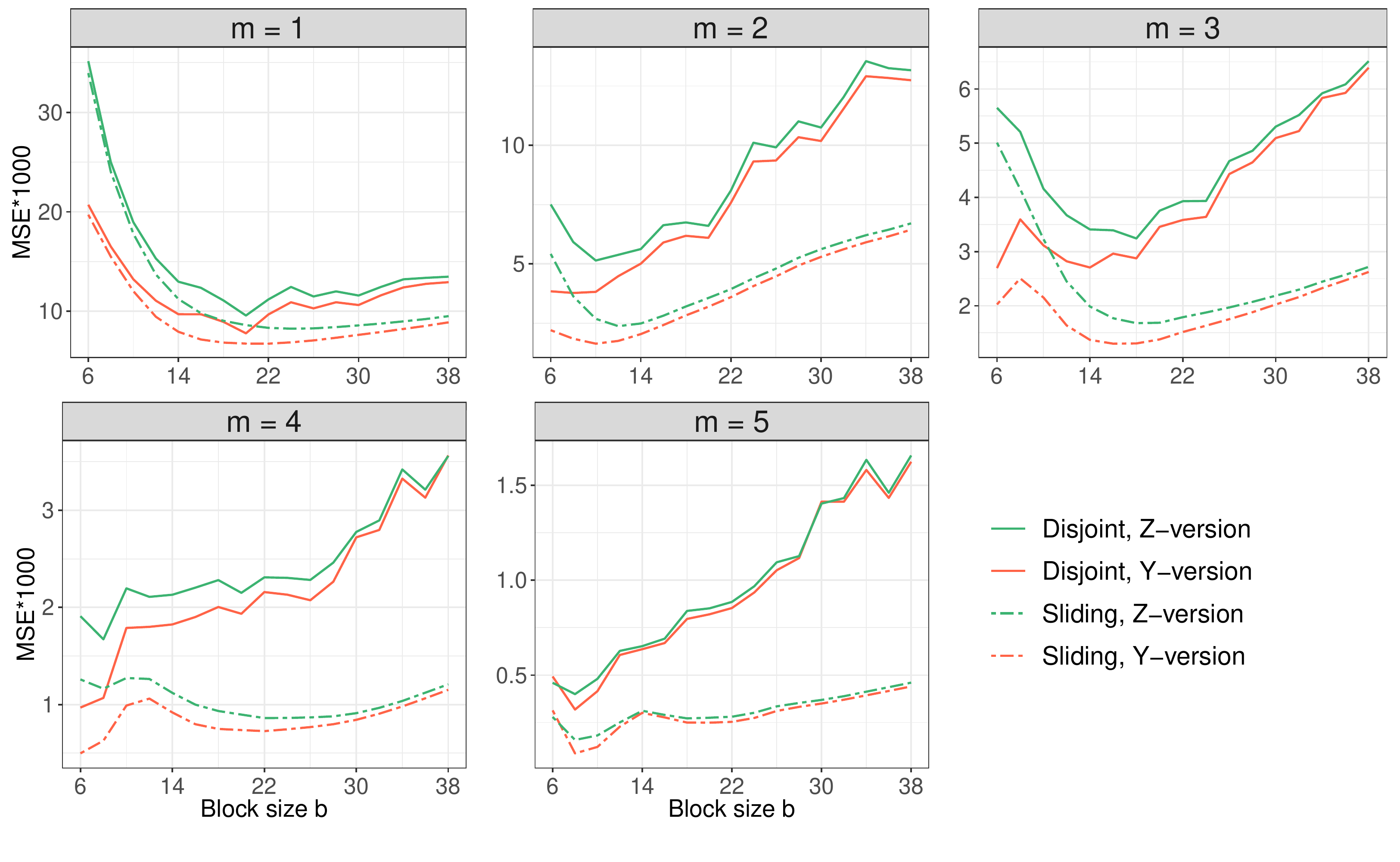} \vspace{-.8cm}
		\caption{Mean squared error multiplied by $10^3$ for the estimation of $\pi(m)$ within the AR-model for $m=1,\ldots,5$.} 
		\vspace{-.3cm} 
	\end{center}
\end{figure}

\begin{figure} [t!]
	\begin{center}
		\includegraphics[width=\textwidth]{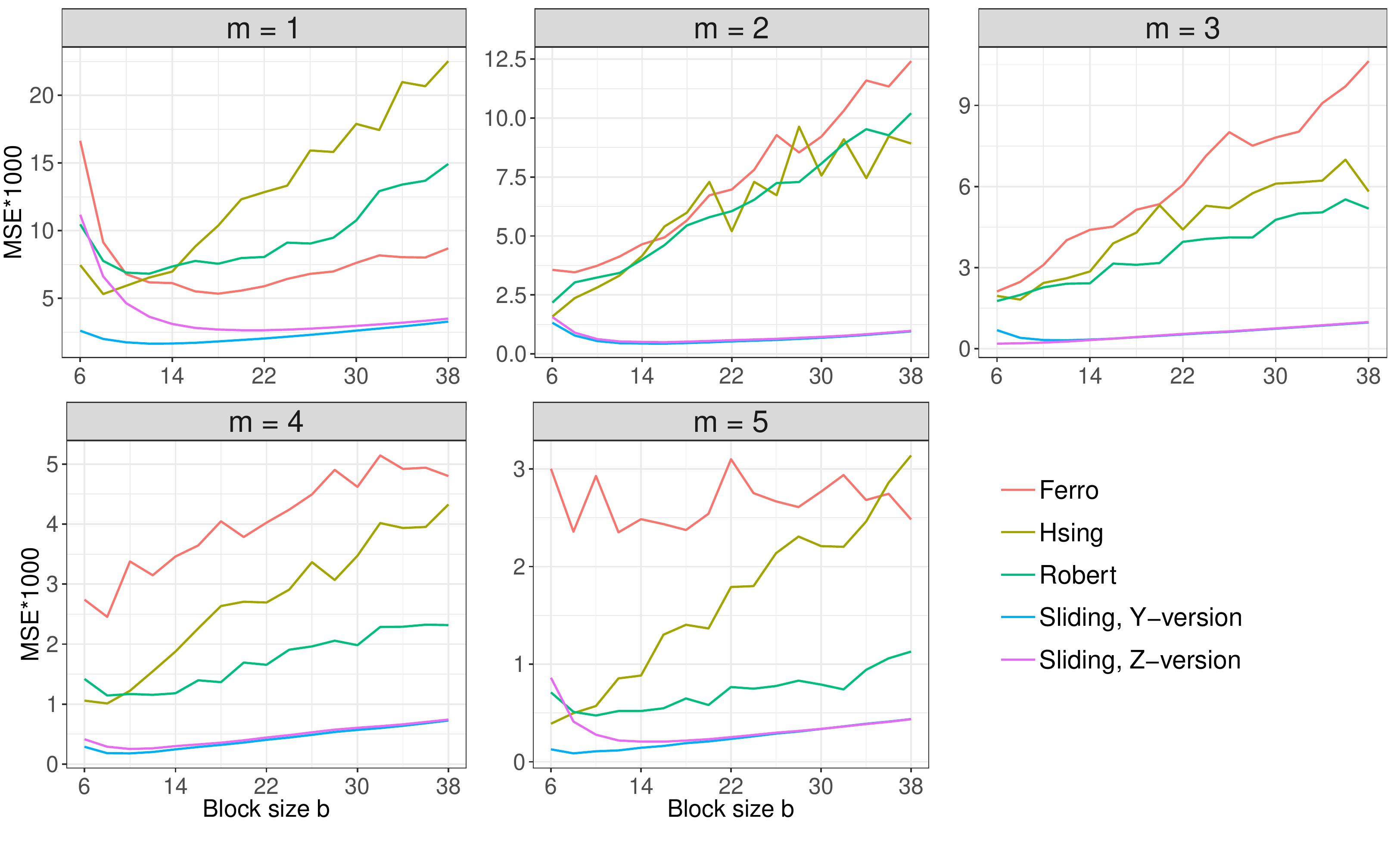} \vspace{-.8cm}
		\caption{Mean squared error multiplied by $10^3$ for the estimation of $\pi(m)$ within the ARMAX-model for $m=1,\ldots,5$.} 
		\vspace{-.3cm} 
	\end{center}
\end{figure}

\begin{figure} [t!]
	\begin{center}
		\includegraphics[width=\textwidth]{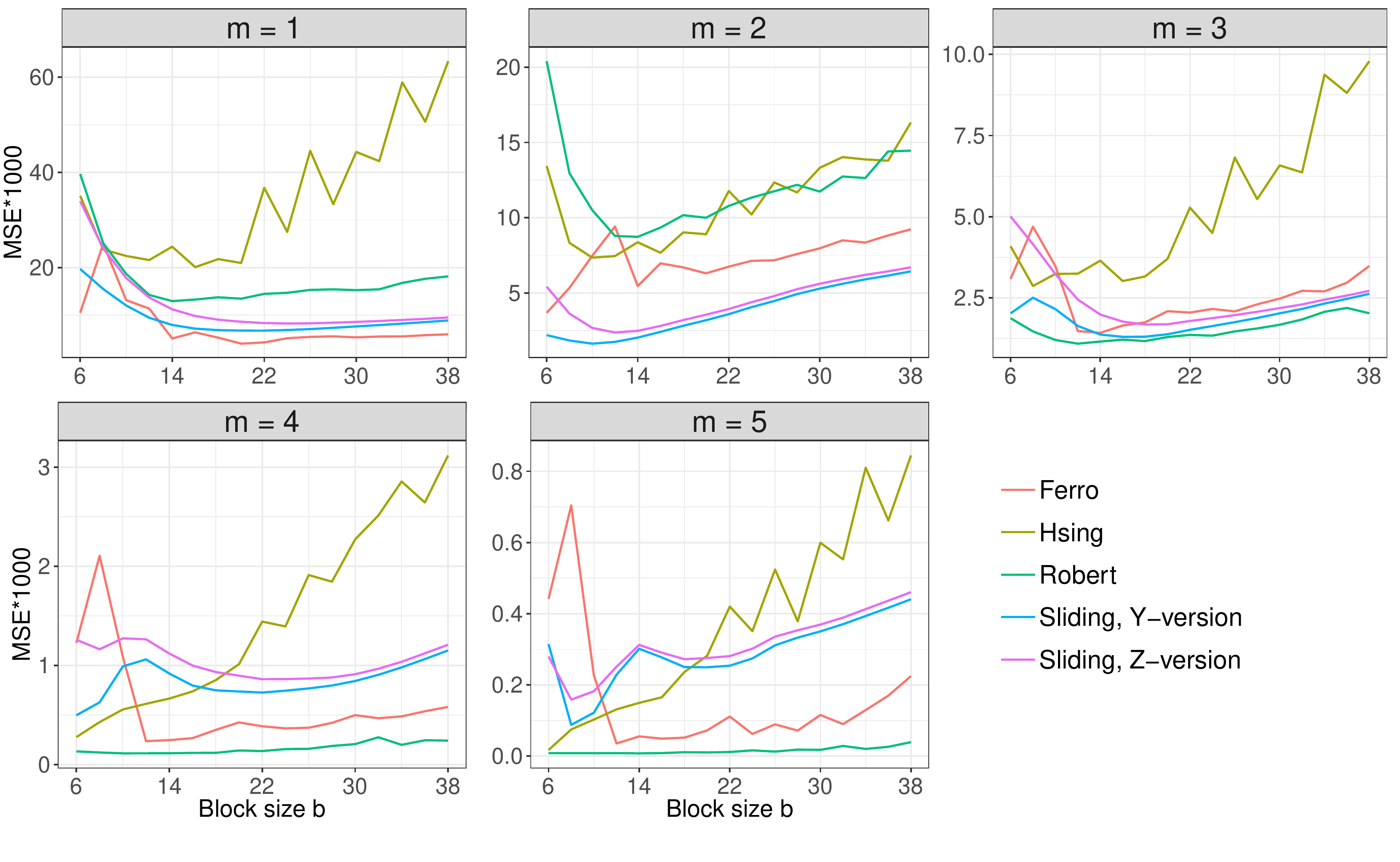} \vspace{-.8cm}
		\caption{Mean squared error multiplied by $10^3$ for the estimation of $\pi(m)$ within the AR-model for $m=1,\ldots,5$.} 
		\vspace{-.3cm} 
		\label{Fig:AR_MSE_all}
	\end{center}
\end{figure}

 \section*{Acknowledgements}
This work has been supported by the Collaborative Research Center ``Statistical modeling of nonlinear dynamic processes'' (SFB 823) of the German Research Foundation, which is gratefully acknowledged.

\bibliographystyle{chicago}
\bibliography{biblio}

\begin{thebibliography}{}

\bibitem[\protect\citeauthoryear{Basrak and Segers}{Basrak and
  Segers}{2009}]{BasSeg09}
Basrak, B. and J.~Segers (2009).
\newblock Regularly varying multivariate time series.
\newblock {\em Stochastic Process. Appl.\/}~{\em 119\/}(4), 1055--1080.

\bibitem[\protect\citeauthoryear{Beirlant, Goegebeur, Segers, and
  Teugels}{Beirlant et~al.}{2004}]{BeiGoeSegTeu04}
Beirlant, J., Y.~Goegebeur, J.~Segers, and J.~Teugels (2004).
\newblock {\em Statistics of extremes: Theory and Applications}.
\newblock Wiley Series in Probability and Statistics. Chichester: John Wiley \&
  Sons Ltd.

\bibitem[\protect\citeauthoryear{Berbee}{Berbee}{1979}]{Ber79}
Berbee, H. C.~P. (1979).
\newblock {\em Random walks with stationary increments and renewal theory},
  Volume 112 of {\em Mathematical Centre Tracts}.
\newblock Amsterdam: Mathematisch Centrum.

\bibitem[\protect\citeauthoryear{Berghaus and Bücher}{Berghaus and
  Bücher}{2017}]{BerBuc17}
Berghaus, B. and A.~Bücher (2017, 004).
\newblock Goodness-of-fit tests for multivariate copula-based time series
  models.
\newblock {\em Econometric Theory\/}~{\em 33\/}(2), 292--330.

\bibitem[\protect\citeauthoryear{Berghaus and Bücher}{Berghaus and
  Bücher}{2018}]{BerBuc18}
Berghaus, B. and A.~Bücher (2018, 10).
\newblock Weak convergence of a pseudo maximum likelihood estimator for the
  extremal index.
\newblock {\em Ann. Statist.\/}~{\em 46\/}(5), 2307--2335.

\bibitem[\protect\citeauthoryear{Bradley}{Bradley}{1983}]{Bra83}
Bradley, R.~C. (1983).
\newblock Approximation theorems for strongly mixing random variables.
\newblock {\em Michigan Math. J.\/}~{\em 30\/}(1), 69--81.

\bibitem[\protect\citeauthoryear{Bradley}{Bradley}{2005}]{Bra05}
Bradley, R.~C. (2005).
\newblock Basic properties of strong mixing conditions. {A} survey and some
  open questions.
\newblock {\em Probab. Surv.\/}~{\em 2}, 107--144.
\newblock Update of, and a supplement to, the 1986 original.

\bibitem[\protect\citeauthoryear{B\"{u}cher and Jennessen}{B\"{u}cher and
  Jennessen}{2020}]{BucJen20}
B\"{u}cher, A. and T.~Jennessen (2020).
\newblock Method of moments estimators for the extremal index of a stationary
  time series.
\newblock {\em Electron. J. Stat.\/}~{\em 14\/}(2), 3103--3156.

\bibitem[\protect\citeauthoryear{Cissokho and Kulik}{Cissokho and
  Kulik}{2020}]{CisKul20}
Cissokho, Y. and R.~Kulik (2020).
\newblock Estimation of cluster functionals for regularly varying time series:
  sliding blocks estimators.

\bibitem[\protect\citeauthoryear{Davis, Drees, Segers, and Warcho\l}{Davis
  et~al.}{2018}]{DavDreSegWar18}
Davis, R.~A., H.~Drees, J.~Segers, and M.~Warcho\l (2018).
\newblock Inference on the tail process with application to financial time
  series modeling.
\newblock {\em J. Econometrics\/}~{\em 205\/}(2), 508--525.

\bibitem[\protect\citeauthoryear{de~Haan, Resnick, Rootz{\'e}n, and
  de~Vries}{de~Haan et~al.}{1989}]{DehResRooVri89}
de~Haan, L., S.~I. Resnick, H.~Rootz{\'e}n, and C.~G. de~Vries (1989).
\newblock Extremal behaviour of solutions to a stochastic difference equation
  with applications to {ARCH} processes.
\newblock {\em Stochastic Process. Appl.\/}~{\em 32\/}(2), 213--224.

\bibitem[\protect\citeauthoryear{Dehling and Philipp}{Dehling and
  Philipp}{2002}]{DehPhi02}
Dehling, H. and W.~Philipp (2002).
\newblock Empirical process techniques for dependent data.
\newblock In {\em Empirical process techniques for dependent data}, pp.\
  3--113. Boston, MA: Birkh\"auser Boston.

\bibitem[\protect\citeauthoryear{Drees and Kne\v{z}evi\'{c}}{Drees and
  Kne\v{z}evi\'{c}}{2020}]{DreKne20}
Drees, H. and M.~Kne\v{z}evi\'{c} (2020).
\newblock Peak-over-threshold estimators for spectral tail processes: random vs
  deterministic thresholds.
\newblock {\em Extremes\/}~{\em 23\/}(3), 465--491.

\bibitem[\protect\citeauthoryear{Drees and Rootz{\'e}n}{Drees and
  Rootz{\'e}n}{2010}]{DreRoo10}
Drees, H. and H.~Rootz{\'e}n (2010).
\newblock Limit theorems for empirical processes of cluster functionals.
\newblock {\em Ann. Statist.\/}~{\em 38\/}(4), 2145--2186.

\bibitem[\protect\citeauthoryear{Drees, Segers, and Warcho\l}{Drees
  et~al.}{2015}]{DreSegWar15}
Drees, H., J.~Segers, and M.~Warcho\l (2015).
\newblock Statistics for tail processes of {M}arkov chains.
\newblock {\em Extremes\/}~{\em 18\/}(3), 369--402.

\bibitem[\protect\citeauthoryear{Ferro}{Ferro}{2003}]{Fer03}
Ferro, C. A.~T. (2003).
\newblock Statistical methods for clusters of extreme values.
\newblock {\em Ph.D. thesis, Lancaster University\/}.

\bibitem[\protect\citeauthoryear{Ferro and Segers}{Ferro and
  Segers}{2003}]{FerSeg03}
Ferro, C. A.~T. and J.~Segers (2003).
\newblock Inference for clusters of extreme values.
\newblock {\em J. R. Stat. Soc. Ser. B Stat. Methodol.\/}~{\em 65\/}(2),
  545--556.

\bibitem[\protect\citeauthoryear{Hsing}{Hsing}{1984}]{Hsi84}
Hsing, T. (1984).
\newblock {\em Point Processes Associated with Extreme Value Theory}.
\newblock ProQuest LLC, Ann Arbor, MI.
\newblock Thesis (Ph.D.)--The University of North Carolina at Chapel Hill.

\bibitem[\protect\citeauthoryear{Hsing}{Hsing}{1991}]{Hsi91}
Hsing, T. (1991).
\newblock Estimating the parameters of rare events.
\newblock {\em Stochastic Process. Appl.\/}~{\em 37\/}(1), 117--139.

\bibitem[\protect\citeauthoryear{Hsing}{Hsing}{1993}]{Hsi93}
Hsing, T. (1993).
\newblock Extremal index estimation for a weakly dependent stationary sequence.
\newblock {\em Ann. Statist.\/}~{\em 21\/}(4), 2043--2071.

\bibitem[\protect\citeauthoryear{Hsing, H{\"u}sler, and Leadbetter}{Hsing
  et~al.}{1988}]{HsiHusLea88}
Hsing, T., J.~H{\"u}sler, and M.~R. Leadbetter (1988).
\newblock On the exceedance point process for a stationary sequence.
\newblock {\em Probab. Theory Related Fields\/}~{\em 78\/}(1), 97--112.

\bibitem[\protect\citeauthoryear{Kulik and Soulier}{Kulik and
  Soulier}{2020}]{KulSou20}
Kulik, R. and P.~Soulier (2020).
\newblock {\em Heavy-Tailed Time Series}.
\newblock Springer Series in Operations Research and Financial Engineering.
  Springer New York.

\bibitem[\protect\citeauthoryear{Kulik, Soulier, and Wintenberger}{Kulik
  et~al.}{2019}]{KulSouWin19}
Kulik, R., P.~Soulier, and O.~Wintenberger (2019).
\newblock The tail empirical process of regularly varying functions of
  geometrically ergodic {M}arkov chains.
\newblock {\em Stochastic Process. Appl.\/}~{\em 129\/}(11), 4209--4238.

\bibitem[\protect\citeauthoryear{Leadbetter}{Leadbetter}{1983}]{Lea83}
Leadbetter, M.~R. (1983).
\newblock Extremes and local dependence in stationary sequences.
\newblock {\em Z. Wahrsch. Verw. Gebiete\/}~{\em 65\/}(2), 291--306.

\bibitem[\protect\citeauthoryear{Leadbetter, Lindgren, and
  Rootz{\'e}n}{Leadbetter et~al.}{1983}]{LeaLinRoo83}
Leadbetter, M.~R., G.~Lindgren, and H.~Rootz{\'e}n (1983).
\newblock {\em Extremes and related properties of random sequences and
  processes}.
\newblock Springer Series in Statistics. Springer-Verlag, New York-Berlin.

\bibitem[\protect\citeauthoryear{Northrop}{Northrop}{2015}]{Nor15}
Northrop, P.~J. (2015).
\newblock An efficient semiparametric maxima estimator of the extremal index.
\newblock {\em Extremes\/}~{\em 18\/}(4), 585--603.

\bibitem[\protect\citeauthoryear{Perfekt}{Perfekt}{1994}]{Per94}
Perfekt, R. (1994).
\newblock Extremal behaviour of stationary {M}arkov chains with applications.
\newblock {\em Ann. Appl. Probab.\/}~{\em 4\/}(2), 529--548.

\bibitem[\protect\citeauthoryear{Robert}{Robert}{2009a}]{Rob09b}
Robert, C.~Y. (2009a).
\newblock Asymptotic distributions for the intervals estimators of the extremal
  index and the cluster-size probabilities.
\newblock {\em J. Statist. Plann. Inference\/}~{\em 139\/}(9), 3288--3309.

\bibitem[\protect\citeauthoryear{Robert}{Robert}{2009b}]{Rob09}
Robert, C.~Y. (2009b).
\newblock Inference for the limiting cluster size distribution of extreme
  values.
\newblock {\em Ann. Statist.\/}~{\em 37\/}(1), 271--310.

\bibitem[\protect\citeauthoryear{Smith and Weissman}{Smith and
  Weissman}{1994}]{SmiWei94}
Smith, R.~L. and I.~Weissman (1994).
\newblock Estimating the extremal index.
\newblock {\em J. Roy. Statist. Soc. Ser. B\/}~{\em 56\/}(3), 515--528.

\bibitem[\protect\citeauthoryear{S{\"u}veges}{S{\"u}veges}{2007}]{Suv07}
S{\"u}veges, M. (2007).
\newblock Likelihood estimation of the extremal index.
\newblock {\em Extremes\/}~{\em 10\/}(1-2), 41--55.

\bibitem[\protect\citeauthoryear{van~der Vaart}{van~der Vaart}{1998}]{Van98}
van~der Vaart, A.~W. (1998).
\newblock {\em Asymptotic statistics}, Volume~3 of {\em Cambridge Series in
  Statistical and Probabilistic Mathematics}.
\newblock Cambridge: Cambridge University Press.

\bibitem[\protect\citeauthoryear{van~der Vaart and Wellner}{van~der Vaart and
  Wellner}{1996}]{VanWel96}
van~der Vaart, A.~W. and J.~A. Wellner (1996).
\newblock {\em Weak Convergence and Empirical Processes - Springer Series in
  Statistics}.
\newblock New York: Springer.

\bibitem[\protect\citeauthoryear{Zou, Volgushev, and B{\"u}cher}{Zou
  et~al.}{2019}]{ZouVolBuc19}
Zou, N., S.~Volgushev, and A.~B{\"u}cher (2019).
\newblock Multiple block sizes and overlapping blocks for multivariate time
  series extremes.
\newblock {\em To appear in Annals of Statistics. ArXiv preprint
  arXiv:1907.09477\/}.

\end{thebibliography}

\end{document}